\title{A new  microlocal analysis of hyperfunctions}
\author {Gustavo Hoepfner}
\address{Departamento de Matem\'atica, Universidade Federal de S\~ao Carlos, S\~ao Carlos, SP, 13565-905, Brasil}
\email{hoepfner@dm.ufscar.br}
\author {Luis F. Ragognette}
\address{Departamento de Matem\'atica, Universidade Federal de S\~ao Carlos, S\~ao Carlos, SP, 13565-905, Brasil}
\email{luisragognette@dm.ufscar.br}
\date{}
\thanks{Work supported in part by  CNPq (grant number 305746/2015-4) and FAPESP (grant numbers 2016/13620-5, 2017/03825-1, 2017/06993-2 and 2017/13450-5).}
\subjclass[2010]{46F15, 32W25, 42B10, 35A17, 35A27}
\keywords{Wave front set, generalized FBI transform, hyperfunction, analytic pseudodifferential operators}
\theoremstyle{definition} 
\newtheorem{Def}{Definition}[section]
\theoremstyle{definition}
\newtheorem{Exe}[Def]{Example}
\newtheorem{Rem}[Def]{Remark}
\theoremstyle{plain}
\newtheorem{Pro}[Def]{Proposition}
\newtheorem{Cor}[Def]{Corollary}
\newtheorem{Teo}[Def]{Theorem}
\newtheorem{Lem}[Def]{Lemma}
\newcommand{\bb}{\mathrm{b}}
\newcommand{\RN}{\R^N}
\newcommand{\CN}{\C^N}
\newcommand{\ZN}{\Z^N}
\newcommand{\dist}{\mathrm{dist}}
\newcommand{\R}{\mathbb{R}}
\newcommand{\N}{\mathbb{N}}
\newcommand{\Rn}{\R^{n}}
\newcommand{\C}{\mathbb{C}}
\newcommand{\Z}{\mathbb{Z}}
\newcommand{\Cinf}{C^{\infty}}
\newcommand{\lra}{\longrightarrow}
\newcommand{\del}{\partial}
\newcommand{\z}{\bar{z}}
\renewcommand{\Re}{\mathrm{Re}\,}
\renewcommand{\Im}{\mathrm{Im}\,}
\newcommand{\dd}{\textnormal{d}}
\newcommand{\Char}{\mathrm{Char}}
\numberwithin{equation}{section}
\begin{document}

\begin{abstract}
  In this work we study microlocal regularity of hyperfunctions defining in this context a  class of generalized FBI transforms first introduced for distributions by Berhanu and Hounie. Using a microlocal decomposition of a hyperfunction and the generalized FBI transforms we were able to characterize the wave-front set of hyperfunctions according several types of regularity. The microlocal decomposition allowed us to recover and generalize both classical and recent results and, in particular, we proved  for differential operators with real-analytic coefficients that if the elliptic regularity theorem  regarding any reasonable regularity holds for distributions, then it is automatically true for hyperfunctions. 
\end{abstract}

\maketitle

\setcounter{tocdepth}{1}

\tableofcontents

\section{Introduction}

The goal of this paper is to study microlocal regularity of hyperfunctions introducing to this context a  class of generalized FBI transforms that first appeared for distributions in a work of Berhanu and Hounie~\cite{BH}. 
Important applications involving qualitative theory of PDE's with the use of the powerful machinery of hyperfunctions has been developed in the last 20 years \cite{CoTrep98, CoHan09, CoHan12} and the references therein. In the last year it was also the subject of two important works. Indeed, in \cite{AraCo19}, the authors use hyperfunctions theory to conclude results of real-analytic solvability in differential complexes arising from locally integrable structures while in  \cite{CoJa20} the authors apply this tool to obtain certain global top degree solvability in CR and locally integrable hypocomplex structures.

The FBI transform is specially important to prove regularity properties in the hyperfunction setting where the  use of cuttoff functions is not tolerated. We proved that this generalized class of FBI transforms also characterize the wave-front set of  hyperfunctions regarding several subsheaves of the sheaf of hyperfunctions. Roughly speaking, given a hyperfunction and a subsheaf $\mathcal S$, the behaviour of the generalized FBI transform gives the directions (on the cotangent bundle) in which the hyperfunction does not belong to $\mathcal S$.

The novelty here is  to explore what we call the {\it microlocal decomposition} of an analytic functional, Definition~\ref{MLdecom},  which allowed us to obtain these equivalences  mainly as a consequence of the result for the sheaf of real-analytic functions. One can think it as a machinery to go from microlocal to local regularity often simplifying the study, that is, 
given a hyperfunction these techniques allow us to produce  a second hyperfunction and, independent of the regularity treated,  the microlocal regularity of the first can be establish from the local regularity of the second.

It is well-known that  every hyperfunction can be written as sum of boundary values of holomorphic functions, moreover, the way that one can write a hyperfunction as boundary value of holomorphic functions encodes its microlocal real-analyticity. One may use similar ideas  to discuss microlocal regularity outside the real-analytic realm.
Inspired by a remark after \cite[Theorem 8.4.15]{Hormander1}, we shall say that a hyperfunction is microlocal regular with respect to a subsheaf $\mathcal{S}$  in a given direction if it  is in $\mathcal{S}$ up-to boundary values of holomorphic functions defined in cones opposing the direction in question (see Section \ref{secMicroreghyp}). This notion agrees with the microlocal regularity in the real-analytic category.

We obtained a characterization of the wave-front set of a hyperfunction studying the growth of the generalized FBI transform when  we consider the regularity according  the sheaves $C^{\omega}, \Cinf, \mathcal{E}^{\{M\}}, \mathcal{D}', \mathcal{D}'^{\{M\}},$ respectively the sheaves of {\it real-analytic, smooth, Denjoy-Carleman } functions and the sheaves of {\it distributions and Denjoy-Carleman ultradistributions}. Let $\mathcal{S}$ be one of these sheaves then the microlocal regularity with respect to $\mathcal{S}$ can be classified in terms of estimates of the FBI transform  by the condition $\mathfrak{M}(\mathcal{S})$, see Section \ref{AFBIClassification}.  We avail ourselves of results in \cite{BH,  Furdos,HM} as ingredients which allowed us to use our technique to extend them to this richer framework.

As an application we proved the so-called elliptic regularity theorem for hyperfunctions with respect to several sheaves mentioned earlier, Theorem~\ref{Cor73}. This result says that if $P$ is a linear partial differential operator with real-analytic coefficients then the wave-front set of a hyperfunction $u$  with respect to $\mathcal S$ is contained in the wave-front set of $Pu$ (also with respect to $\mathcal S$) union with the characteristic set of $P$, \eqref{charP}, i.e.,
\begin{equation}
WF_{\mathcal S} (u)\subset WF_{\mathcal S} (Pu) \cup \Char P.
\end{equation}
Our proof also says that for any $\mathcal S$ subsheaf of the distributions that contains the sheaf of real-analytic functions as a subsheaf the elliptic regularity theorem holds for hyperfunctions if and only if it holds for distributions, see Corollary~\ref{Cor73???}.

Our work follows the approach to hyperfunctions present in \cite{CT} which follows  Martineau's ideas \cite{Martineau} and is more suitable to obtain explicit formulas  then Sato's original presentation (\cite{Sato1, Sato2}, see also \cite{kkk86}).
We recall that  Sato, in \cite{Satoellipticregularity}, introduced the notion of essential support (or singular spectrum) for hyperfunctions. Brós and Iagolnitzer, in \cite{BrosIagolnitzer}, used a variant of the Fourier transform, that we now call FBI transform, to characterize what they called the essential support in terms of the exponential decay of the FBI transform. Using pseudodifferential operators Hörmander introduced the notion of wave-front set. Finally, Bony  proved the equivalence between the Sato's essential support and the characterization via FBI transform of Brós and Iagolnitzer in \cite{Bony}.

Variants of the FBI transform gained importance recently, for example, the class introduced by   Berhanu and Hounie  in \cite{BH} was  used by them to characterize microlocal analyticity and microlocal smoothness of distributions. Recently, Hoepfner and Medrado~\cite{HM} studied these generalized FBI transforms (there denoted by FBI-BH transforms) and characterized the Denjoy-Carleman wave-front set of ultradistributions. Later, Fürdös~\cite{Furdos} also gave a characterization of the Denjoy-Carleman wave-front set. The main difference between these results is that Fürdös' result concerns a broader class of Denjoy-Carleman functions but deals only with distributions while Hoepfner and Medrado's work also take into account ultradistributions.

The elliptic regularity theorem is known in the context of hyperfunctions and real-analytic microlocal regularity as Sato's Theorem \cite{Satoellipticregularity}.
 Our proof is derived from the one of Sato's Theorem given in  Hörmander's book, \cite[Theorem 9.5.1]{Hormander1} which is an adaptation of the proof given in \cite{BonySchapira}.
To obtain this result for other sheaves we needed to go back to the paramount results in \cite{Boutet} regarding analytic pseudodifferential operators customizing them to our microlocal decomposition. This was the main ingredient in the proof and it is the content of the Appendix \ref{Appendix3}.

This paper is divided in two parts. In the first part we introduce a generalized FBI transform for analytic functionals. We start Section \ref{Preliminaries} recalling some basic machinery about approximations of entire functions and analytic functionals then, in Section \ref{secFBIanalytic}, we discuss some conditions on the  phase functions in order to extend Berhanu and Hounie's generalized FBI transforms to the framework of analytic functionals and we also prove an inversion formula for these FBI transforms. We conclude the first part providing our main example of phase functions.

The second part concerns microlocal regularity of hyperfunctions on several sheaves mentioned before.  Section \ref{secMicroreghyp} is destined to a quick overview on hyperfunctions followed by a discussion about boundary values of holomorphic functions and a method to define wave-front sets regarding any subsheaf of the sheaf of hyperfunctions. Then we  characterize, in Section \ref{secProfTheo2.4}, the analytic wave-front set of hyperfunctions. The proof of the characterization of the analytic wave-front mix ideas from Cordaro and Treves's approach for hyperfunctions  with the Berhanu and Hounie's techniques for the distribution set up and one of the outcomes is that the structure of our proof reduces the characterization of a wide range of wave-front sets to simple proposition, as one can see in Section \ref{AFBIClassification}. As an application we proved the elliptic regularity theorem for hyperfunctions considering the microlocal regularity with respect to several sheaves in Section \ref{secEllipticReg}. 

Throughout the paper we will use the notation $B_r(x)$ to denote the ball in $\R^n$ centered at $x\in\R^n$ and radius $r$ and $B_r^\C(x)$ to denote the ball in $\C^n$ centered at $x\in\C^n$ and radius $r$. To simplify and avoid confusion we will always assume that all open sets have smooth boundary, although in some cases this may not be necessary.

The authors would like to express their gratitude to Temple University's Mathematics Department for the hospitality during the time they spent there where part of this work was developed.

\section{Preliminaries}\label{Preliminaries}

Let $\mathcal{U}$ be an open subset of $\CN.$ The space of  holomorphic functions on $\mathcal{U}$ will be denote by $\mathcal{O}(\mathcal{U}).$ Given $A$ a subset of $\CN$ we denote by $A_{\delta}$ its $\delta$ neighborhood, i.e., 
\begin{equation}
A_{\delta}=  \{z \in \CN: \dist(z, A)< \delta\}.
\end{equation}
Let $\mathcal{K}$ be a compact subset of $\CN$ and denote by $\mathcal{O}(\mathcal{K})$ the space of germs of holomorphic functions on $\mathcal{K},$ in other words, $\mathcal{O}(\mathcal{K})$ is the inductive limit of the spaces $\mathcal{O}(\mathcal{K}_{\delta})$ when $\delta \lra 0.$
From now on $K$ will denote a compact subset of $\RN.$ It is well know that compact subsets of $\RN$ are Runge compact sets of $\CN$ which means that if $\mathcal{U}$ is an open neighborhood of $K$ in $\CN$ then every holomorphic function $h\in \mathcal{O}(\mathcal{U})$ can be approximate in a complex neighborhood of $K$ by entire functions.

There is a classical way to construct a family of entire functions that approximate $h$: choose an open subset  $W\subset\RN$ such that $K\subset W  \subset\subset \mathcal{U}$ and let $\chi$ be  the characteristic function of $W$, then we can define $H^{\epsilon}$ by
\begin{equation}
  \label{SeqConv}
  H^{\epsilon}(w)= \frac{1}{(2\pi)^{N}} \int_{\RN} \int_{\RN} e^{-\epsilon|\xi|^{2}} e^{i(x-w)\xi} \chi(x) h(x) \dd \xi \dd x.
\end{equation}
A proof of this result can be found in  \cite{CT}. It is an exercise to check that this approximation scheme also yields the following property:
\begin{equation}\label{aoured}
\begin{array}{c}
\text{there exist  $\delta_0>0$ such that for every fixed $\delta \in (0,\delta_0]$ and  $\tilde{\epsilon}>0$} 
\\[5pt]
\text{ we can find $\epsilon_0>0$ such that for all $\epsilon \in (0,\epsilon_0]$ it follows that}
\\[5pt]
\displaystyle  \sup_{K_\delta}|h-H^\epsilon|\leq \tilde{\epsilon}\sup_{W_{2 \delta}}|h|.
\end{array}
\end{equation}

We will now  recall the definition of analytic functionals. 

\begin{Def}[Analytic Functional]
Let $\Omega$ be an open subset of $\CN$. Consider $\mathcal{O}(\Omega)$ with its usual topology as a closed subspace of the space of continuous functions on $\Omega$. We say that $\mu$ is an analytic functional if it is a continuous linear functional on $\mathcal{O}(\Omega)$.  We denote by $\mathcal{O}'(\Omega)$ the space of all analytic functionals on $\Omega$.
\end{Def}

\begin{Def}[Analytic Functional carried by a compact set]\label{def:AF}
Let $\mathcal{K}$ be a compact subset of $\CN.$ We say that $\mu \in \mathcal{O}'(\CN)$ is  carried by $\mathcal{K}$ if, for every $\delta>0,$ there exists $C_\delta>0$ such that
\begin{equation}\label{deffuncanal}
  |\mu(h)|\leq C_\delta \sup_{z \in \mathcal{K}_\delta} |h(z)|, \quad \forall h \in \mathcal{O}(\CN).
\end{equation}
\end{Def}

 We are going to denote the space of analytic functionals on $\CN$ carried by $\mathcal{K}$ by $\mathcal{O}'(\mathcal{K})$. Let us point out that we are consciously abusing of the notation when we denote the space of analytic functionals carried by $\mathcal{K}$ by  $\mathcal{O}'(\mathcal{K}).$ This space can be identified with the dual space of $\mathcal{O}(\mathcal{K})$ whenever $\mathcal{K}$ is a Runge compact subset of $\CN$ and it will always happens in this text since we will work with compact subsets of $\RN$.

\section{The generalized FBI transform of analytic functionals}\label{secFBIanalytic}

\subsection{Good Phase Functions and the Generalized FBI Transform}

\begin{Def}[Good Phase Functions]\label{def:hpr9byt}
  Let $\Psi \in \mathcal{O}(\CN)$ and fix $\lambda>0$ a parameter. We will say that $\Psi$ has the {\it properties of a good phase function} (for $\lambda$) if it satisfies the following properties:
  \begin{enumerate}
  \item  For every $K\subset \RN$ compact set, every $\delta>0$ and every $\epsilon\in (0,1)$, there is $M>0$ such that
    \begin{align*}
      \int_{\RN}e^{-\epsilon |\xi|^{2}}\bigg(\int_{\RN}|\Psi\big(|\xi|^{\lambda}( \tau- w)\big)| |\xi|^{\lambda N}\dd \tau\bigg) \dd \xi<M, \quad \forall \,w \in K_{\delta}.
    \end{align*}
  \item There is a constant $L>0$ such that
    \begin{align*}
      \int_{\RN} \Psi(t^{\lambda}(\tau- w))t^{\lambda N} \dd \tau=1,\quad \forall\,w \text{ with } |\Im w|< L \ \text{ and } \forall \, t>0.
    \end{align*}
  \end{enumerate}
\end{Def}

We are now ready to introduce the FBI transform of an analytic functional.
\begin{Def}[Generalized FBI Transform] Let $K$ be a compact subset of $\RN$, $\lambda$ be a positive number and $\Psi \in \mathcal{O}(\CN)$.  We define the FBI transform (with respect to $\Psi$ and $\lambda$) of an analytic functional $\mu\in \mathcal{O}'(K)$  by
  \begin{equation}
    \label{FBIdef}
    \mathcal{F}_{\Psi, \lambda}\mu(\tau, \xi)= \mu_{w}\Big( e^{i(\tau- w) \xi} \Psi\big( |\xi|^{\lambda}(\tau- w)\big)\Big), \quad \forall \tau\in \CN,\forall \xi \in \RN.
    \end{equation}
\end{Def}

Let us discuss the regularity of $\mathcal{F}_{\Psi, \lambda}\mu$. Since $ e^{i(\tau- w) \xi} \Psi\big( |\xi|^{\lambda}(\tau- w)\big)$ is a continuous function of $(\tau, \xi)$ the same is true for $\mathcal{F}_{\Psi, \lambda}\mu$, thanks to \eqref{deffuncanal}.
In fact, $\mathcal{F}_{\Psi, \lambda}\mu$ is $\Cinf$ in $(\tau,\xi)$ for $\xi\neq 0$ and, for $\xi$ fixed, it is an entire function of $\tau$.

 In order to prove the claim we recall that $\mathcal{O}(K_\delta)$ is a closed subspace of $C(K_\delta)$ so every element of $\mathcal{O}'(K_\delta)$ can be identified with a Radon measure with compact support. Since $\mathcal{O}'(K) \subset \mathcal{O}'(K_\delta)$ we see that, for every $\mu\in \mathcal{O}'(K),$ there exists  a Radon measure $\nu$ with compact support in $K_{\delta}$ such that
\begin{align}\label{RadonMeasure}
  \mu(h) = \int_{K_\delta} h(w) \dd \nu(w).
\end{align}
Therefore \eqref{RadonMeasure} allow us to differentiate under the integral sign in \eqref{FBIdef}, proving  the claim. 

\medskip
 
Let $\mu$ be in  $\mathcal{O}'(K)$. Choose any  open bounded neighborhood $K\subset W\subset\subset \RN$ and consider $\chi$ to be the characteristic function of $W$. Define the family  $\mu_\epsilon$ of functions by the expression:
\begin{align}\label{FBI:inversion}
  \mu_\epsilon(x) &= \bigg( \int_{\RN} \int_{\RN} e^{i(x- \tau)\xi- \epsilon|\xi|^{2}} \mathcal{F}_{\Psi, \lambda}\mu(\tau, \xi)|\xi|^{\lambda N} \dd \tau \dd \xi\bigg) \frac{ \chi(x)}{(2\pi)^{N}}
\end{align}

For a fixed $\epsilon>0$, we can regard $\mu$ as a Radon measure in order to commute the integrals and the analytic functional in \eqref{FBI:inversion}, to obtain
\begin{align}\label{FBI:inversion1}
  \mu_\epsilon(x)= \mu_w \bigg( \int_{\RN} \int_{\RN} e^{i(x- \tau)\xi- \epsilon|\xi|^{2}} \Big( e^{i(\tau- w) \xi} \Psi\big( |\xi|^{\lambda}(\tau- w)\big)\Big)|\xi|^{\lambda N} \dd \tau \dd \xi\bigg) \frac{ \chi(x)}{(2\pi)^{N}}.
\end{align}
Now one can  check that, differentiating under the integral sign, the function
\begin{align}\label{ISENTIRE}
z \mapsto \int_{\RN} \int_{\RN} e^{i(z- \tau)\xi- \epsilon|\xi|^{2}} \mathcal{F}_{\Psi, \lambda}\mu(\tau, \xi)|\xi|^{\lambda N} \dd \tau \dd \xi
\end{align}
is an entire function, which further implies that $\mu_\epsilon$ is a real-analytic in $W$. 

\subsection{The inversion formula.}\label{InvFBI}

We are now ready to prove an inversion formula for these FBI transforms. This inversion formula is a powerful device that will be specially useful to study the microlocal regularity of a hyperfunction.  

\begin{Pro}[Inversion Formula] \label{fbibhif}
Let $W$ be an open subset of $\RN$, $K$ be a compact subset of $W$ and let $\chi$ be the characteristic function of $W.$ Then for a given analytic functional $\mu \in \mathcal{O}'(K)$ it follows that  $\mu_\epsilon$ given by \eqref{FBI:inversion} converges to $ \mu$ in $\mathcal{O}'(\overline{W}).$
\end{Pro}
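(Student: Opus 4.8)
The plan is to show that for any test function $h \in \mathcal{O}(\overline{W})$ — equivalently, $h$ holomorphic in a complex neighborhood $\mathcal{U}$ of $\overline{W}$ — we have $\mu_\epsilon(h) \to \mu(h)$ as $\epsilon \to 0^+$, where $\mu_\epsilon$ is regarded as an element of $\mathcal{O}'(\overline{W})$ via integration of the real-analytic function \eqref{FBI:inversion} against $h$. The strategy is to plug the expression \eqref{FBI:inversion1} for $\mu_\epsilon$ into the pairing with $h$, use the Radon measure representation \eqref{RadonMeasure} of $\mu$ to move everything inside $\mu_w$, and then perform the $\tau$- and $\xi$-integrations in the right order to recognize a classical mollifier acting on $h$. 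The key algebraic point is that the phase $e^{i(x-\tau)\xi}e^{i(\tau-w)\xi} = e^{i(x-w)\xi}$ telescopes, so that after commuting integrals the $\tau$-integral only sees the factor $\Psi(|\xi|^\lambda(\tau-w))|\xi|^{\lambda N}$, and property (2) of a good phase function (Definition~\ref{def:hpr9byt}) collapses it to $1$ — provided $|\Im w| < L$, which we arrange by choosing the Radon measure $\nu$ supported in $K_\delta$ with $\delta < L$.

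More precisely, first I would fix $h \in \mathcal{O}(\mathcal{U})$ and choose $\delta \in (0,\delta_0]$ small enough that $K_\delta \subset \mathcal{U}$ and $\delta < L$; represent $\mu$ by a Radon measure $\nu$ on $K_\delta$. Writing out $\langle \mu_\epsilon, h\rangle = (2\pi)^{-N}\int_W \chi(x) h(x)\big(\int\!\!\int e^{i(x-\tau)\xi-\epsilon|\xi|^2}\mathcal{F}_{\Psi,\lambda}\mu(\tau,\xi)|\xi|^{\lambda N}\,\dd\tau\,\dd\xi\big)\dd x$, I would substitute $\mathcal{F}_{\Psi,\lambda}\mu(\tau,\xi) = \int_{K_\delta} e^{i(\tau-w)\xi}\Psi(|\xi|^\lambda(\tau-w))\,\dd\nu(w)$ and invoke property (1) of the good phase function together with the Gaussian factor $e^{-\epsilon|\xi|^2}$ to justify Fubini and swap all integrals, bringing the $\nu$-integration outside. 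The inner $\tau$-integral then becomes $\int_{\RN}\Psi(|\xi|^\lambda(\tau-w))|\xi|^{\lambda N}\,\dd\tau$, which equals $1$ by property (2) since $|\Im w| \le \delta < L$. What remains is $(2\pi)^{-N}\int_{K_\delta}\big(\int_W \chi(x)h(x)\big(\int_{\RN} e^{i(x-w)\xi - \epsilon|\xi|^2}\,\dd\xi\big)\dd x\big)\dd\nu(w)$ — but this inner double integral is exactly $H^\epsilon(w)$ from \eqref{SeqConv} applied to $h$ (with the roles of the real variable and the spectral variable matched up), so $\langle\mu_\epsilon,h\rangle = \int_{K_\delta} H^\epsilon(w)\,\dd\nu(w)$.

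Finally I would invoke the approximation property \eqref{aoured}: for the fixed $\delta$ and any $\tilde\epsilon > 0$ there is $\epsilon_0$ with $\sup_{K_\delta}|h - H^\epsilon| \le \tilde\epsilon \sup_{W_{2\delta}}|h|$ for all $\epsilon \le \epsilon_0$. Hence
\[
|\langle\mu_\epsilon,h\rangle - \mu(h)| = \Big|\int_{K_\delta}(H^\epsilon(w) - h(w))\,\dd\nu(w)\Big| \le \|\nu\|\,\tilde\epsilon\,\sup_{W_{2\delta}}|h| \xrightarrow{\epsilon\to 0} 0,
\]
and since the bound is uniform over $h$ in a bounded subset of $\mathcal{O}(\mathcal{U})$ (with constant $C_\delta = \|\nu\|$ independent of $h$), this gives convergence $\mu_\epsilon \to \mu$ in the strong topology of $\mathcal{O}'(\overline{W})$.

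I expect the main obstacle to be the bookkeeping in the Fubini/Tonelli step that interchanges the $\nu$-integral, the $\tau$-integral and the $\xi$-integral: one must check absolute integrability of $e^{-\epsilon|\xi|^2}\,|\Psi(|\xi|^\lambda(\tau-w))|\,|\xi|^{\lambda N}$ over $(\tau,\xi,w) \in \RN\times\RN\times K_\delta$, and this is precisely what property (1) of Definition~\ref{def:hpr9byt} is designed to supply — but one has to be careful that the neighborhood appearing there is adapted to $K_\delta$ (so one applies it with "$K$" taken to be $K_\delta$ itself, or a slightly larger compact set, and a smaller auxiliary radius). A secondary technical point is matching the two representations \eqref{SeqConv} and \eqref{FBI:inversion} of the mollification precisely, including the placement of the cutoff $\chi$; this is purely a change-of-variables and order-of-integration check but worth stating carefully so that the identification $\langle\mu_\epsilon,h\rangle = \int H^\epsilon\,\dd\nu$ is transparent.
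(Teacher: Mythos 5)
Your proposal reproduces the paper's proof essentially verbatim: plug \eqref{FBI:inversion1} into the pairing $\langle\mu_\epsilon,h\rangle$, commute the $x$-integral past $\mu_w$, collapse the $\tau$-integral using property (2) of a good phase function to recover $H^\epsilon$ from \eqref{SeqConv}, so that $\mu_\epsilon(h)=\mu(H^\epsilon)$, and then conclude with the approximation scheme \eqref{aoured} together with the estimate $|\mu(h-H^\epsilon)|\le C_\delta\sup_{K_\delta}|h-H^\epsilon|$ from Definition~\ref{def:AF} (your $\|\nu\|$ is the same constant). The only cosmetic differences are that you phrase the final bound through the Radon measure $\nu$ rather than directly through the carrier estimate, and you spell out the Fubini justification (which the paper leaves implicit) and the uniformity over bounded sets; mathematically the argument is identical.
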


\begin{proof}
Fix $h \in \mathcal{O}(\CN)$. We identify $\mu_\epsilon$ with an analytic functional and use \eqref{FBI:inversion1} to compute
\begin{align}\label{epoayre}
\mu_\epsilon(h)&= \int_{W} \mu_\epsilon(x)h(x)\dd x
\notag\\[5pt]
&=\int_{W}   \mu_w \bigg( \int_{\RN} \int_{\RN} e^{i(x- \tau)\xi- \epsilon|\xi|^{2}} \Big( e^{i(\tau- w) \xi} \Psi\big( |\xi|^{\lambda}(\tau- w)\big)\Big)|\xi|^{\lambda N} \dd \tau \dd \xi\bigg)  \frac{h (x)}{(2 \pi)^{N}}  \dd x 
\notag\\[5pt]
& =\mu_{w}\bigg(\int_{W}  \int_{\RN}  e^{i(x-w) \xi - \epsilon|\xi|^{2}} \int_{\RN} \Psi\big( |\xi|^{\lambda}(\tau- w)\big) |\xi|^{\lambda N} \dd \tau \dd \xi \frac{h(x)}{(2 \pi)^{N}} \dd x\bigg) 
\notag\\[5pt]
& =\mu_{w}\bigg(\int_{\RN}  \int_{\RN}  e^{i(x-w) \xi - \epsilon|\xi|^{2}}   \frac{\chi (x)}{(2 \pi)^{N}} h(x)\dd \xi \dd x\bigg) \notag\\[5pt]
&= \mu(H^{\epsilon}).
\end{align}
Now using the approximation scheme described in \eqref{aoured}, we conclude that
given $\tilde{\epsilon}>0$ and $\delta\in (0,\delta_0]$ we can find $\epsilon_0$ such that for each $\epsilon\in (0,\epsilon_0]$ it holds
\begin{align*}
  |(\mu-\mu_\epsilon)(h)|&= |\mu(h- H^\epsilon)|\\
&\leq C_\delta \sup_{K_\delta} |h-H^\epsilon|\\
&\leq  \tilde{\epsilon}C_\delta \sup_{W_{2\delta}} |h|.
\end{align*}
This proves that $\mu_\epsilon \lra \mu$ in $\mathcal{O}'(\overline{W})$.
\end{proof}

\subsection{Main example of the generalized FBI transform.}

Consider a homogeneous polynomial $p$ of degree $2k$ given by
\begin{equation}\label{ex:1}
p(z) = \sum_{ |\alpha|= 2k} a_\alpha z^\alpha, \quad \forall z\in \CN
\end{equation}
where $\alpha\in \ZN$ and $a_\alpha\in \R$. We also assume that there exist positive constants $c,C$ such that 
\begin{equation}\label{ex:2}
  c|x|^{2k} \leq p(x)\leq C|x|^{2k}, \quad \forall x\in \RN.
\end{equation}

It is easy to see that property \eqref{ex:2} can be extended to a conic complex neighborhood of $\RN \setminus\{0\}$ in the following way: there exist positive constants $c', C'$ and $\rho$ such that
\begin{equation}\label{DesParComp}
  c'|z|^{2k} \leq \Re p(z)\leq C'|z|^{2k}, \quad \forall z\in \Gamma_\rho,
\end{equation}
where $\Gamma_\rho:= \{ z= x+ i y\in\CN: |y| \leq \rho|x| \}.$ 

From now on we will consider the family of entire functions 
\begin{equation}\label{laie86t}
\Psi(z):=c_p e^{-p(z)},\qquad (c_p)^{-1}:=\int_{\RN} e^{-p(x)} \dd x.
\end{equation}
The next result shows that $\Psi$ satisfies the {\it properties of a good phase function} for $\lambda\in (0, 1/k)$, as described in Definition~\ref{def:hpr9byt}.

\begin{Lem}
Let $\Psi(z)$ defined by \eqref{laie86t} with $p(z)$ satisfying \eqref{DesParComp}. Then for all $\lambda\in (0, 1/k)$, $\Psi$ satisfies properties (1) and (2) from Definition~\ref{def:hpr9byt}.
\end{Lem}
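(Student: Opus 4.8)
The plan is to verify properties (1) and (2) of Definition~\ref{def:hpr9byt} directly, exploiting the Gaussian-type decay of $\Psi(z)=c_p e^{-p(z)}$ coming from \eqref{DesParComp}, together with a change of variables that absorbs the $|\xi|^{\lambda}$ scaling. The key observation is that for $\lambda < 1/k$ the factor $|\xi|^{2k\lambda}$ appearing in $\Re p(|\xi|^{\lambda}(\tau-w))$ grows strictly slower than $|\xi|^2$, so the $e^{-\epsilon|\xi|^2}$ weight in property (1) is never actually needed to control the $\xi$-integral in the relevant regime — it only helps. Let me sketch each part.

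For property (2): fix $w$ with $|\Im w| < L$ (where $L := \rho\,\mathrm{dist}$-type constant to be chosen so that $t^{\lambda}(\tau - w) \in \Gamma_\rho$ whenever $\tau \in \RN$; more precisely we need $|\Im(t^\lambda(\tau-w))| = t^\lambda|\Im w| \le \rho\, t^\lambda|\tau - \Re w| = \rho|\Re(t^\lambda(\tau-w))|$, which holds as soon as $|\Im w|\le \rho|\tau-\Re w|$, and since this can fail for $\tau$ near $\Re w$ one instead argues by analytic continuation) and $t>0$. Perform the substitution $\eta = t^{\lambda}(\tau - w)$, so that $t^{\lambda N}\dd\tau = \dd\eta$ when $w$ is real, giving $\int_{\RN}\Psi(t^\lambda(\tau-w))t^{\lambda N}\dd\tau = \int_{\RN}\Psi(\eta)\dd\eta = c_p\int_{\RN}e^{-p(\eta)}\dd\eta = 1$ by the normalization in \eqref{laie86t}. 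For complex $w$ with small imaginary part one first establishes the identity for $w$ real and then notes both sides are holomorphic in $w$ (the left side because one may differentiate under the integral sign, using \eqref{DesParComp} to dominate, as long as the contour $\eta \in t^\lambda(\RN - w)$ stays in $\Gamma_\rho$), and invokes a contour-shift / Cauchy's theorem argument to move from the real contour to the shifted one; the decay \eqref{DesParComp} kills the contributions at infinity.

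For property (1): fix $K$, $\delta$, $\epsilon \in (0,1)$. Using \eqref{DesParComp}, for $w \in K_\delta$ and $\tau \in \RN$ with $|\xi|$ large we have $|\Psi(|\xi|^\lambda(\tau-w))| \le c_p e^{-\Re p(|\xi|^\lambda(\tau-w))} \le c_p e^{-c'|\xi|^{2k\lambda}|\tau-w|^{2k}}$ provided $|\xi|^\lambda(\tau-w) \in \Gamma_\rho$; when $\tau$ ranges so that this membership fails one still has a crude uniform bound $|\Psi| \le c_p e^{C_K}$ on the bounded exceptional $\tau$-set (whose measure is $O(|\xi|^{-\lambda N} (\text{const})^N)$, harmless). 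Substituting $\eta = |\xi|^\lambda(\tau - \Re w)$ in the inner integral turns $\int_{\RN}|\Psi(|\xi|^\lambda(\tau-w))||\xi|^{\lambda N}\dd\tau$ into $\int_{\RN}|\Psi(\eta + i|\xi|^\lambda \Im w)|\dd\eta$, which is bounded by $c_p\int_{\RN}e^{-c'|\eta|^{2k} + (\text{lower order})}\dd\eta$, uniformly in $\xi$ and in $w \in K_\delta$ — call this bound $B$. Then the full double integral is at most $B\int_{\RN}e^{-\epsilon|\xi|^2}\dd\xi = B(\pi/\epsilon)^{N/2} =: M < \infty$, which is exactly what is required.

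The main obstacle is bookkeeping rather than conceptual: one must carefully justify the change of variables and contour shift with a genuinely complex translation $i|\xi|^\lambda\Im w$ that grows with $|\xi|$, and confirm that it stays inside the cone $\Gamma_\rho$ (this is where $w \in K_\delta$ with $\delta$ small, and $L$ chosen appropriately, enters — the translated contour must remain in the region where \eqref{DesParComp} applies), and handle the finitely-bounded set of "bad" $\tau$ where the cone condition fails. Once the domination by $c_p e^{-c'|\eta|^{2k}}$ is secured uniformly in the parameters, both properties follow immediately; the condition $\lambda < 1/k$ is used only to ensure $2k\lambda < 2$, so that in property (1) the intrinsic decay is at worst comparable to — and the Gaussian weight $e^{-\epsilon|\xi|^2}$ dominates — and in property (2) no growth is introduced that would obstruct the contour shift.
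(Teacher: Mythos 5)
Your argument for property (2) is sound and essentially the paper's: prove the identity for real $w$ by the change of variables $\eta=t^\lambda(\tau-w)$, then observe that $H(w)=c_p\int e^{-p(t^\lambda(\tau-w))}t^{\lambda N}\,\dd\tau$ is entire and $\equiv 1$ on $\RN$, hence $\equiv 1$ everywhere.

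For property (1) there is a genuine gap. After the substitution $\eta=|\xi|^\lambda(\tau-\Re w)$ the inner integral becomes $\int_{\RN}|\Psi(\eta-i|\xi|^\lambda\Im w)|\,\dd\eta$, and you claim this is bounded uniformly in $\xi$ by a constant $B$. That is false once $\Im w\neq 0$: the shift $v:=|\xi|^\lambda\Im w$ has $|v|\to\infty$, and on the region $|\eta|<|v|/\rho$ the point $\eta-iv$ lies outside $\Gamma_\rho$, so \eqref{DesParComp} gives no lower bound on $\Re p$ there; by homogeneity $\Re p(\eta-iv)$ can be as negative as $-C|v|^{2k}\asymp -C|\xi|^{2k\lambda}$. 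Your auxiliary claims are also off: since $\Gamma_\rho$ is a cone, $|\xi|^\lambda(\tau-w)\in\Gamma_\rho$ iff $\tau-w\in\Gamma_\rho$, so the exceptional $\tau$-set has measure independent of $\xi$ (it is $\{|\tau-\Re w|<|\Im w|/\rho\}$), equivalently the exceptional $\eta$-set has measure $O(|\xi|^{\lambda N})$, not $O(|\xi|^{-\lambda N})$; and on it $|\Psi|$ is not $O(e^{C_K})$ but $O(e^{C|\xi|^{2k\lambda}})$. Thus the inner integral actually grows like $|\xi|^{\lambda N}e^{S|\xi|^{2k\lambda}}$ --- which is exactly the bound the paper records in \eqref{eaipue4} via its constant $S$ --- and it is precisely the factor $e^{-\epsilon|\xi|^2}$ together with the hypothesis $2k\lambda<2$ that makes the outer $\xi$-integral finite. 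Your opening assertion that the Gaussian weight ``is never actually needed'' is therefore wrong, and it is in tension with your own closing sentence, which correctly invokes $2k\lambda<2$ so that the Gaussian dominates the $e^{S|\xi|^{2k\lambda}}$-growth.
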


\begin{proof}
We first prove the integrability condition (1) from Definition~\ref{def:hpr9byt}. First, 
%
note that
\begin{align}\label{ine1}
  c_{p}\int_{\RN} |e^{-p(|\xi|^\lambda(\tau-w))}| |\xi|^{ \lambda N} \dd \tau & = c_{p}\int_{\RN} e^{-\Re \{p(|\xi|^\lambda(\tau-w))\}} |\xi|^{ \lambda N} \dd \tau.
\end{align}
%
Now, fix $K\subset \RN$ a compact set. For any $\delta>0$ there exists a constant $A=A(K,\delta,\rho)$ such that if $w \in K_{\delta}$ and  $\tau\in\RN$ are such that $|\tau- w|>A,$ then $\tau-w\in \Gamma_\rho$ (where $\Gamma_\rho$ is such that \eqref{DesParComp} holds).
Therefore, one can apply  \eqref{DesParComp} together the homogeneity $\Re \{p(|\xi|^\lambda(\tau-w))\}=  |\xi|^{2k\lambda} \Re\{ p(\tau- w)\}$ to obtain
\begin{equation}\label{eaipue}
  c' |\xi|^{2k\lambda} |\tau-w|^{2k} \leq \Re \{p(|\xi|^\lambda(\tau-w))\}, \quad \forall w \in K_{\delta},\, |\tau- w|>A.
\end{equation}
Thus, it follows from \eqref{eaipue} that  the right hand-side of \eqref{ine1} can be estimate by
\begin{align}\label{aioeudu}
c_{p}\int_{\RN} &e^{-\Re \{p(|\xi|^\lambda(\tau-w))\}} |\xi|^{ \lambda N} \dd \tau 
\notag\\[5pt]  
&\le c_{p}|\xi|^{ \lambda N} \bigg(\int_{|\tau-w|\leq A} e^{-\Re \{p(|\xi|^\lambda(\tau-w))\}}  \dd \tau
+ \int_{|\tau-w|> A} e^{-c' |\xi|^{2k\lambda} |\tau-w|^{2k}}  \dd \tau \bigg).
\end{align}
The second term in the right hand-side of \eqref{aioeudu}
can be bounded by
\begin{equation}\label{eaipue2}
c_{p} |\xi|^{ \lambda N}\int_{\RN} e^{- c' |\xi|^{\lambda}| \tau|^{2k}}  \dd \tau \le 
c_p \int_{\RN}  e^{-c'|\eta|^{2k}} \dd \eta=C.
\end{equation}
%
%
For the first term in the right hand-side of \eqref{aioeudu}, consider 
\begin{equation*}\label{eaipue3}
S:= \sup \{-\Re p(\tau-w): w\in K_{\delta},\,  \tau \in \RN \, \text{ with }\, \dist(\tau,K_\delta)<A \}.
\end{equation*}
Thus one can bound the first integral in the right hand-side of \eqref{aioeudu} by
\begin{equation}\label{eaipue4}
 c_{p}\int_{|\tau-w|\leq A} e^{-\Re \{p(|\xi|^\lambda(\tau-w))\}} |\xi|^{ \lambda N} \dd \tau
 \le  c_p m(B_A) e^{ |\xi|^{2k\lambda} S} |\xi|^{ \lambda N} 
\end{equation}
where $m(B_A)$ stands for the Lebesgue measure of the radius $A$ ball in $\RN$.
Therefore, integrability condition is a consequence of \eqref{aioeudu}, \eqref{eaipue2} and \eqref{eaipue4}:
\begin{align}\label{aioeudua}
\int_{\R}e^{-\epsilon |\xi|^{2}}&\bigg(\int_{\RN} |e^{-p(|\xi|^{\lambda}(\tau-w))}| |\xi|^{\lambda N}\dd \tau\bigg) \dd \xi 
\notag\\[5pt]  
&\le C\int_\R e^{-\epsilon |\xi|^2} \dd \xi + c_p m(B_A) \int_\R e^{-\epsilon |\xi|^2 + |\xi|^{2k\lambda} S} |\xi|^{ \lambda N} \dd \xi  <\infty
\end{align}
since $\lambda \in (0,1/k)$.

Moving on, we are now going to prove  property (2) from Definition~\ref{def:hpr9byt}. To do so, consider the function
\begin{equation}\label{diausny}
 H(w):= c_p\int_{\RN}  e^{- p(t^{\lambda} (\tau- w))} t^{\lambda N} \dd \tau, \quad w \in \CN.
\end{equation}
The  arguments above can be used to prove that $H(w)$ is an entire function. 
When $w\in \RN$ we can perform the change of variables $\tau \lra \tau/t^\lambda+ w$  and conclude that $H|_{\RN} \equiv 1$. Therefore $H\equiv1$ on $\CN$.
\end{proof}

\section{Microlocal regularity of hyperfunctions}\label{secMicroreghyp}

Now we start the second part of this work where we will use the FBI transform to analyze microlocal regularity of hyperfunctions on several functional analytic spaces. Since the way to manipulate hyperfunctions is quite different from the way we treat distributions we will summarize some standard definitions and results. Our main reference for this subject is Cordaro and Treves' book~\cite{CT}.

\subsection{A review of hyperfunctions}
Let $\mathcal{B}$ be the sheaf of (germs of) hyperfunctions on $\RN$. We recall that the restriction of a hyperfunction to  a bounded open subset  $U$ of $\RN$ can be identified with a class of analytic functionals $[\mu] \in \mathcal{O}'(\overline{U})/\mathcal{O}'(\del U)$. In other words, if $\mu_1$ and $\mu_2$ are two representatives of a hyperfunction $u\in \mathcal{B}(U),$ then $\mu_1 - \mu_2 \in \mathcal{O}'(\del U).$ Moreover, if $V\subset\subset U\subset\subset \RN$, $u \in \mathcal{B}(U)$ and $\mu\in \mathcal{O}'(\overline{U})$ is one representative of $u$, then there are $\nu \in \mathcal{O}'(\overline{V})$ and $\lambda \in \mathcal{O}'(\overline{U}\setminus V)$ such that $\mu = \nu+ \lambda$ and $u|_{V}= [\nu].$ If $\Omega$ is any open set of $\RN$, then  any linear differential operator with real-analytic coefficients in $\Omega$ defines a linear operator acting on $\mathcal{B}(\Omega)$.

\subsubsection{Boundary values in the sense of hyperfunctions.}
 Let $\Gamma$ be an open cone in $
\RN\setminus \{0\}$ and let $\delta>0$ be an arbitrary number. If $U$ is an open subset of $\RN$ we define the wedge
\begin{equation*}
  \mathcal{W}_\delta(U;\Gamma)= \{ x+ iy: x\in U, y\in \Gamma \textrm{ with } |y|< \delta\}.
\end{equation*}

Given $ g \in \mathcal{O}(\mathcal{W}_\delta(U;\Gamma))$ and an open subset $V\subset\subset U$ whose boundary is smooth we can define for every $y \in \Gamma$, $|y|< \delta$, the analytic functional $\mu_{g,V}^{y}$ by
\begin{equation}\label{rifenrhct}
  \langle \mu_{g, V}^{y} , h \rangle= \int_{V+ iy} g(z) h(z) \dd z, \quad \forall h \in \mathcal{O}(\CN).
\end{equation}

The next remark is the content of   \cite[Theorem II.1.1]{CT}.

\begin{Rem}\label{faluh}  There is an analytic functional $\mu_{g, V} \in \mathcal{O}'(\overline{V})$ that satisfies the following properties:
\begin{itemize}
\item [(i)]To every neighborhood of $\mathcal{U}$ of $\del V$ in $\CN$ there is $\epsilon$, $0< \epsilon < \delta$ such that $\mu_{g,V}- \mu_{g,V}^{y} \in \mathcal{O}'(\mathcal{U})$ for every  $y \in \Gamma$ with $|y|<\epsilon$;
\item [(ii)] Moreover, if  $W$  is an open set with smooth boundary and $V\subset \subset W\subset \subset U$ then $\mu_{g, W}- \mu_{g, V} \in \mathcal{O}'(\overline{W}\setminus V)$.
\end{itemize}
\end{Rem}

We are now ready to define boundary values in the sense of hyperfunctions.

\begin{Def}[Boundary Values]\label{def:BV}
  Given $ g \in \mathcal{O}(\mathcal{W}_\delta(U;\Gamma))$, we define the boundary value of $g$ in $V$ as the hyperfunction in $V$ whose representative is $\mu_{g,V}$ given in  item $(i)$ of Remark~\ref{faluh}. We are going to denote this hyperfunction by $\bb g$.

Also,  item $(ii)$ of Remark~\ref{faluh} allows us to define $\bb g$ as the hyperfunction in $U$ whose restriction to any bounded subset $V$ of $U$ is represented by $\mu_{g, V}$.
Note that the property described in $(i)$ of  Remark~\ref{faluh} does not need to hold for the representatives of $\bb g$ in $U$, therefore, to avoid confusion, we will use $\nu_g$ to refer to a representative of $\bb g$ in $U$. 
\end{Def}

One can check that if $a\in C^{\omega}(U)$ and $\tilde{a}$ stands for its complexification to $U_\delta\subset\CN$, then $a \bb g= \bb (\tilde{a} g)$ where $g \in \mathcal{O}(\mathcal{W}_{\delta}(U;\Gamma)).$

\subsubsection{Analytic microlocal regularity for hyperfunctions}

It is known that every hyperfunction can be written as sum of boundary values of holomorphic functions (see Theorem II.3.1 of \cite{CT}). Moreover, the microlocal regularity of a hyperfunction can be defined using boundary values of holomorphic functions.

 For future reference, let us fix the following notation: given $\xi_0 \in \RN\setminus \{0\}$ and a cone $\Gamma\subset \RN\setminus \{0\}$ we will say that $\Gamma$ is an {\it opposite cone} for $\xi_0$ or that $\Gamma$ is {\it opposing the direction  $\xi_0$} if
\begin{align}\label{laliuwe}
  \xi_0\cdot y<0\ \textrm{ for all }\ y \in \Gamma.
\end{align}



 Let $\mathcal{S}$ be a sheaf that can be embedded in the sheaf of hyperfunctions, i.e., there is an injective  homomorphism of sheaves $\iota: \mathcal{S} \hookrightarrow \mathcal{B}$. We will say that a hyperfunction $u\in \mathcal{B}(\RN)$ belongs to $\mathcal{S}(U)$ if we can find $v\in \mathcal{S}(U)$ such that $u|_U= \iota(v).$

We are going to introduce a concept of { \it microlocal regularity with respect to $\mathcal{S}$.}
\begin{Def}[Microlocal $\mathcal{S}$-regularity]\label{alsiu980}
    We shall say that a hyperfunction $u$ in $\RN$ is microlocally $\mathcal{S}$-regular at $(x_0, \xi_0) \in \RN\times (\RN\setminus\{ 0\})$ if there are $U$ an open neighborhood of $x_0$ and finitely many open convex cones $\Gamma_j \subset \RN\setminus \{0\}$, $j=1, \ldots, \ell,$ with the following properties:
  \begin{enumerate}
  \item  for every $j=1, \ldots, \ell$, $\Gamma_j$ is an opposite cone for $\xi_0$;
  \item there exist $F_j \in \mathcal{O}(\mathcal{W}_\delta(U;\Gamma_j)), (\delta>0, j=1, \ldots, \ell)$ such that $u- \sum_{j=1}^{\ell} \bb F_j$ belongs to $\mathcal{S}(U)$.
  \end{enumerate}

  The set of points $(x,\xi) \in \RN\times (\RN\setminus\{0\})$ at which $u$ is not microlocally $\mathcal{S}$-regular will be called the $\mathcal{S}$-wave-front set of $u.$ The $\mathcal{S}$-wave-front set of a hyperfunction $u$ will be denoted by $WF_{\mathcal{S}}(u).$
\end{Def}

We recall that there is a natural way to embed the sheaf of distributions into the sheaf of hyperfunctions and, consequently, the same is true for the subsheaves of $\mathcal{D}'$, e.g., the sheaf of real-analytic functions, sheaf of smooth functions, etc. For every $x_0 \in \RN$ we denote by $\mathcal{B}_{x_0}$ and $\mathcal{D}'_{x_0}$ the spaces of germs of hyperfunctions and distributions at $x_0$, respectively.
We can identify ${\bf u} \in \mathcal{D}'_{x_0}$ with an element of $\mathcal{B}_{x_0}$ in the following way: let $(U, u)$ be a representative of ${\bf u}$ and choose $\varphi \in \Cinf_{c}(U)$ such that $\varphi=1$ in a neighborhood of $x_0$, then $\varphi u$ defines an analytic functional $\nu_{\varphi u}$ by the formula:
\begin{align}\label{asouejri}
    \nu_{\varphi u}(h) = u( \varphi h) , \quad \forall h \in \mathcal{O}(\CN).
  \end{align}
One can check that the germ of hyperfunction defined by $\nu_{\varphi u}$ is independent of the representative of ${\bf u}$ and  the chosen cutoff function. 
Let $U$ be a bounded open subset of $\RN.$ This embedding allow us to identify every $u\in \mathcal{D}'(U)$ with a class of analytic functionals $[\nu_u]\in \mathcal{O}'(U)/\mathcal{O}'(\del U)$ with the property that any representative $\nu_u$ satisfies $\nu_u- \nu_{\varphi u} \in \mathcal{O}'(\overline{U}\setminus V)$ whenever $V\subset \subset U$, $\varphi \in \Cinf_c(U)$ satisfies $\varphi =1$ in $V$ and $\nu_{\varphi u}$ is defined by \eqref{asouejri}.


\subsubsection{The generalized FBI transform in the study of microlocal regularity}

From now on we are going to consider a particular subclass of the generalized FBI transform. We pick $\lambda= 1/2k$ and choose $p$ satisfying \eqref{ex:1} and \eqref{ex:2}. Consider $\Psi(z)= c_p e^{-p(z)}$ as in \eqref{laie86t} and 
the generalized FBI transform
\begin{equation}\label{pe84n}
  \mathcal{F}_{p}\mu(\tau, \xi):=\mathcal{F}_{\Psi, 1/(2k)}(\tau, \xi)= c_p\mu_w\Big( e^{i(\tau-w)\xi -|\xi| p(\tau-w)}\Big).
\end{equation}

Let $\mu\in \mathcal{O}'(\overline{U})$. Given an open subset  $V\subset\subset U$  and a cone $\Gamma\in \RN$  we will say that $\mu$ satisfies the condition $\mathfrak{M}(C^{\omega})$ of {\it microlocal real-analytic regularity} in $V\times \Gamma$ if there exist constants $c_1, c_2>0$ such that
  \begin{equation}
  	\tag{$\mathfrak{M}(C^{\omega})$}
    |\mathcal{F}_{p}\mu(\tau, \xi)|\leq c_1 e^{-c_2|\xi|}, \quad \forall (\tau, \xi) \in V\times \Gamma.
  \end{equation}

Let $u \in \mathcal{B}(\RN)$ and consider the following two regularity conditions for the FBI transform of representatives of $u$:
\begin{itemize}
\item [$(\mathfrak{R}_1)$] There exists an open neighborhood $U$  of $x_0$ in $\RN$ such that, for every open neighborhood $V\subset\subset U$ of $x_0$ and every  representative $\mu \in \mathcal{O}'(\overline{U})$ of $u$ in $U$, there are a conic neighborhood $\Gamma$ of $\xi_0$ in $\RN$ such that $\mu$ satisfies condition $\mathfrak{M}(C^{\omega})$ in $V\times \Gamma.$
\item [$(\mathfrak{R}_2)$]
  There exist  open neighborhoods $U, V$  of $x_0$ in $\RN$, $V\subset\subset U,$ a  representative $\mu \in \mathcal{O}'(\overline{U})$ of $u$ in $U$, a conic neighborhood $\Gamma$ of $\xi_0$ in $\RN$ such that $\mu$ satisfies condition $\mathfrak{M}(C^{\omega})$ in $V\times \Gamma.$
\end{itemize}

The following theorem states that conditions $(\mathfrak{R}_1)$ and $(\mathfrak{R}_2)$ are both equivalent to microlocal real-analyticity.
\begin{Teo}\label{Theo2.4}
  Let $u \in \mathcal{B}(\RN)$, $x_0 \in \RN$ and $\xi_0 \in \RN$. Then $(x_0, \xi_0)\notin WF_{C^\omega}(u)$ if and only if $(\mathfrak{R}_1)$ holds at $(x_0, \xi_0)$ if and only if $(\mathfrak{R}_2)$ holds at $(x_0, \xi_0)$.
\end{Teo}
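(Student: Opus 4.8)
The plan is to establish the chain of implications $(\mathfrak{R}_1) \Rightarrow (\mathfrak{R}_2) \Rightarrow (x_0,\xi_0)\notin WF_{C^\omega}(u) \Rightarrow (\mathfrak{R}_1)$. The first implication is trivial: $(\mathfrak{R}_1)$ asserts the estimate for \emph{every} representative $\mu$ and \emph{every} $V\subset\subset U$, so in particular it holds for one such choice, which is $(\mathfrak{R}_2)$. The nontrivial content is the remaining two implications, and they should be handled by exploiting the inversion formula (Proposition~\ref{fbibhif}) together with the boundary-value description of microlocal real-analyticity from Definition~\ref{alsiu980}.

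For $(\mathfrak{R}_2) \Rightarrow (x_0,\xi_0)\notin WF_{C^\omega}(u)$, I would start from a representative $\mu\in\mathcal{O}'(\overline U)$ satisfying $\mathfrak{M}(C^\omega)$ in $V\times\Gamma$ with $\Gamma$ a conic neighborhood of $\xi_0$. Cover the complement of $\Gamma$ in $\RN\setminus\{0\}$ by finitely many open convex cones $\Gamma_1,\dots,\Gamma_\ell$ each of which is an opposite cone for some direction, in such a way that each $-\Gamma_j$ (or rather the dual/polar directions) avoids $\xi_0$; concretely one wants the associated wedges $\mathcal{W}_\delta(V';\Gamma_j)$ to carry holomorphic functions $F_j$ obtained by partitioning the $\xi$-integral in the inversion formula \eqref{FBI:inversion} according to the cones $\{\xi\in\Gamma_j^\circ\}$ and taking a complex deformation $x\mapsto x+iy$, $y\in\Gamma_j$. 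The point is that $\mu = \bb(\text{something analytic near }x_0) + \sum_j \bb F_j$ modulo $\mathcal{O}'(\partial V')$: the piece of the inverse integral coming from the cone around $\xi_0$ converges, by the exponential bound $\mathfrak{M}(C^\omega)$, to a function holomorphic in a full complex neighborhood of $x_0$ (so it contributes nothing to the wave-front set), while each remaining conic piece, after deforming the contour into the tube $\RN+i\Gamma_j$, defines $F_j\in\mathcal{O}(\mathcal{W}_\delta(V';\Gamma_j))$ whose boundary value accounts for that part. Since each $\Gamma_j$ is an opposite cone for $\xi_0$ this exhibits exactly the decomposition required by Definition~\ref{alsiu980}, giving $(x_0,\xi_0)\notin WF_{C^\omega}(u)$. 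This is essentially the Cordaro--Treves/Bros--Iagolnitzer mechanism transported to the generalized phase $\Psi=c_pe^{-p}$, and the good-phase-function properties (1)--(2) are what make the contour deformations and the convergence of the deformed integrals legitimate.

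For $(x_0,\xi_0)\notin WF_{C^\omega}(u)\Rightarrow(\mathfrak{R}_1)$, I would use that microlocal real-analyticity gives $u = v + \sum_{j=1}^{\ell}\bb F_j$ on a neighborhood $U$ of $x_0$, with $v\in C^\omega(U)$ and each $\Gamma_j$ opposing $\xi_0$. It suffices to estimate $\mathcal{F}_p\mu(\tau,\xi)$ for $(\tau,\xi)$ in $V\times\Gamma$ for a suitable conic neighborhood $\Gamma$ of $\xi_0$, and by linearity of $\mathcal{F}_p$ in $\mu$ one treats the three types of terms separately: (a) for the real-analytic term $v$, one uses the representative $\nu_v$ coming from \eqref{asouejri} and the exponential decay follows from the classical FBI estimate for real-analytic functions (here the Gaussian-type decay of $\Psi$ and property \eqref{DesParComp} force $|\mathcal{F}_p\nu_v(\tau,\xi)|\le c_1e^{-c_2|\xi|}$ after shifting the integration contour into the complex using analyticity of $v$); (b) for each boundary-value term $\bb F_j$, one computes $\mathcal{F}_p(\mu_{F_j,V})(\tau,\xi)$ via the defining integral \eqref{rifenrhct} over $V+iy$ with $y\in\Gamma_j$, and the gain is the factor $e^{i(\tau-z)\xi}$ evaluated at $z=x+iy$, producing $e^{-y\cdot\xi}$; since $\Gamma_j$ opposes $\xi_0$ we have $y\cdot\xi_0<0$, so $-y\cdot\xi_0>0$ — wait, one must be careful about signs and about which of $\mathcal{F}_p\mu$ versus its conjugate decays, but for a conic neighborhood $\Gamma$ of $\xi_0$ small enough, $y\cdot\xi$ has a definite sign bounded away from $0$ uniformly, yielding exponential decay $e^{-c_2|\xi|}$; (c) finally one must check that changing the representative $\mu$ by an element of $\mathcal{O}'(\partial U)$ (as allowed when $V\subset\subset U$) and restricting from $U$ to $V$ changes $\mathcal{F}_p\mu$ by something that still satisfies $\mathfrak{M}(C^\omega)$ on $V\times\Gamma$ — this uses that an analytic functional carried by $\partial U$, hence by a compact set disjoint from $\overline V$, has FBI transform with exponential decay on $V$ by the same contour-shift argument as in (a). Assembling (a)--(c) proves $(\mathfrak{R}_1)$.

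The main obstacle I expect is in $(\mathfrak{R}_2)\Rightarrow(x_0,\xi_0)\notin WF_{C^\omega}(u)$: making the contour deformation in the inversion integral \eqref{FBI:inversion} rigorous when one splits the $\xi$-integration into cones and pushes $x$ into a complex tube $\RN+i\Gamma_j$. One must verify that the deformed integrals converge and define genuinely holomorphic functions on wedges $\mathcal{W}_\delta(V';\Gamma_j)$ — this requires careful use of the exponential decay $\mathfrak{M}(C^\omega)$ together with the polynomial growth $|\xi|^{\lambda N}$ and the Gaussian regulator $e^{-\epsilon|\xi|^2}$ (and then a limiting argument $\epsilon\to 0$ controlled by the inversion formula) — and that the boundary values of the resulting $F_j$ reassemble to $\mu$ modulo an analytic functional carried by $\partial V'$, which is where the identity $\mu_\epsilon(h)=\mu(H^\epsilon)$ from the proof of Proposition~\ref{fbibhif} and the approximation scheme \eqref{aoured} do the bookkeeping. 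The sign/geometry bookkeeping in step (b) of the converse — matching ``opposite cone for $\xi_0$'' with the direction of decay of $\mathcal{F}_p$, and choosing $\Gamma$ and $\delta$ uniformly — is routine but must be done with care so that the constants $c_1,c_2$ are uniform over the stated $V\times\Gamma$.
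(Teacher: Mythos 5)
Your $(\mathfrak{R}_1)\Rightarrow(\mathfrak{R}_2)$ and $(x_0,\xi_0)\notin WF_{C^\omega}(u)\Rightarrow(\mathfrak{R}_1)$ directions follow the paper's route: items (b) and (c) of your sketch are essentially Propositions~\ref{FBIBHBV} and~\ref{Invariance} (contour deformation for boundary values in cones opposing $\xi_0$, and exponential FBI decay for functionals carried away from $\tau$), and (a) is the standard FBI estimate for a compactly supported real-analytic function; together they yield Corollary~\ref{WFSSBVH}.

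The genuine gap is in $(\mathfrak{R}_2)\Rightarrow(x_0,\xi_0)\notin WF_{C^\omega}(u)$: you localize the $\xi$-integration of the inversion formula into cones but never localize the $\tau$-integration. The decay estimate $\mathfrak{M}(C^\omega)$ is only available for $\tau\in V$; for the remaining $\tau\in\overline{U}$ the only bound at hand is the subexponential one of \eqref{SEMPREVALE}, $|\mathcal{F}_p\mu(\tau,\xi)|\le C_Le^{L|\xi|}$, which is not integrable against $e^{i(z-\tau)\xi}|\xi|^{\lambda N}\dd\xi$ once the Gaussian regulator $e^{-\epsilon|\xi|^2}$ is removed. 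So the ``$\mathcal{C}_1$-piece'' of the split inversion integral, as you have written it, has no reason to converge as $\epsilon\to0$, let alone to define a holomorphic function near $x_0$. The paper copes with this by first splitting $\mu_\epsilon=\mu_0^\epsilon+\mu_1^\epsilon$ according to $|\tau-x_0|\le a$ versus $|\tau-x_0|\ge a$, proving separately (Propositions~\ref{Claim1} and~\ref{ConvergenceLemma}, Subsection~\ref{section5.4}) that the $\mu_1^\epsilon$ part converges to a function holomorphic near $x_0$ --- a substantial contour-deformation argument in $\xi$ that your sketch does not anticipate --- and only then applying the $\xi$-cone decomposition to the $\tau$-localized $\mu_0$. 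Even then, $\epsilon$-uniform integrability requires the two deformations $\xi\mapsto\xi+i|\xi|(x-\tau)$ and $x\mapsto x+i\chi(x)\xi/|\xi|$, and the Stokes boundary term $R_1$ on $\partial\mathcal{C}_1$ must be shown to decompose as $\sum_{j\ge2}R_j$ with each $R_j$ holomorphic in a cone opposing $\xi_0$. Your appeal to the identity $\mu_\epsilon(h)=\mu(H^\epsilon)$ controls only the total inversion integral, not the individual conic pieces, so it cannot do the bookkeeping you rely on.
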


 We will prove Theorem~\ref{Theo2.4} in Section~\ref{secProfTheo2.4}. It is a classical result that the  FBI transform can be used to characterize the analytic wave-front set of a hyperfunction. Theorem~\ref{Theo2.4} says that this characterization also holds if one considers these generalized FBI transforms.
Our goal is to prove several versions of Theorem~\ref{Theo2.4} for different many other sheaves that are common in the literature, e.g. smooth functions, distributions, Gevrey classes, Denjoy-Carleman  classes  and  ultradistributions, see Section \ref{AFBIClassification}.

\subsection{Invariance of the FBI estimates by change of representatives}

Before we focus in proving Theorem~\ref{Theo2.4}, we need to check that the decay of the generalized FBI transform is an invariant of the hyperfunction, i.e., if we change the representative we obtain the same type of decay.
Directly from the definition of  analytic functional carried by $K$ we obtain an estimate for $\mathcal{F}_{p}\mu(\tau, \xi)$ as follows: for every $\delta>0$ there exists a positive constant $C_\delta$ such that
\begin{align}\label{ib6f}
   |\mathcal{F}_{p}\mu(\tau, \xi)|\leq  C_\delta c_p \sup_{w\in K_{\delta}} e^{\Re \{i(\tau-w)\xi -|\xi| p(\tau-w)\}}.
\end{align}

  Note that $\Re\{i(\tau- w)\xi- |\xi|p(\tau-w)\}<0$ when $w \in K$, so given $L>0$ we can find $\delta>0$ such that $\Re\{i(\tau- w)\xi- |\xi|p(\tau-w)\}\leq L|\xi|$ for $\tau \in \RN$ and $w\in K_\delta$, thus we conclude that,
\begin{equation}\label{SEMPREVALE}
\begin{array}{c}
\text{for each constant $L>0$, there is $C_L>0$ such that}\\[5pt]
    |\mathcal{F}_{p}\mu(\tau, \xi)|\leq  C_L  e^{L|\xi|}, \quad (\tau,\xi)\in \RN\times \RN.
  \end{array}
\end{equation}

We can refine this statement when $\tau$ is far from $K.$ 

\begin{Pro}\label{Invariance}
Let $\rho$ be a positive number such that \eqref{DesParComp} holds. Fix  disjoint compact subsets $K, K'\subset\RN$. Then there exist constants $a>0$ and $\delta>0$ such that
\begin{equation}\label{oasiy7t}
   |\mathcal{F}_{p}\mu(\tau, \xi)|\leq  C_\delta c_p  e^{-a|\xi|}, \quad \forall\, \tau \in K',\, \xi \in \RN\, \text{ and }\, \mu\in \mathcal{O}'(K_\delta)
\end{equation}
where $C_\delta$ depends on $\mu\in \mathcal{O}'(K_\delta)$ according with \eqref{deffuncanal}.
\end{Pro}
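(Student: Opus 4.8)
The plan is to estimate directly the kernel $e^{i(\tau-w)\xi-|\xi|p(\tau-w)}$ appearing in \eqref{pe84n} and to exploit the fact that, since $K$ and $K'$ are disjoint compact sets, the vector $\tau-w$ stays in a compact region bounded away from the origin as $\tau$ ranges over $K'$ and $w$ ranges over a small complex neighborhood $K_\delta$ of $K$. Concretely, fix $d:=\dist(K,K')>0$. First I would choose $\delta>0$ small enough (depending on $d$ and on the aperture $\rho$ of the cone $\Gamma_\rho$ in \eqref{DesParComp}) so that for every $\tau\in K'$ and every $w\in K_\delta$ we have simultaneously $|\Re(\tau-w)|\geq d/2$, $|\Im(\tau-w)|\leq \rho|\Re(\tau-w)|$ — i.e. $\tau-w\in\Gamma_\rho$ — and $|\tau-w|\geq d/2$. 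This is possible because $\Im w$ and the perturbation of $\Re w$ away from $K$ are both controlled by $\delta$.

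With this choice, I would split the exponent into its real part. We have
\begin{equation*}
\Re\{i(\tau-w)\xi-|\xi|p(\tau-w)\}= -\Im(w)\cdot\xi-|\xi|\,\Re\{p(\tau-w)\}.
\end{equation*}
The first term is bounded by $|\Im w|\,|\xi|\leq \delta|\xi|$ in absolute value. For the second term, since $\tau-w\in\Gamma_\rho$, estimate \eqref{DesParComp} gives $\Re\{p(\tau-w)\}\geq c'|\tau-w|^{2k}\geq c'(d/2)^{2k}=:2a_0>0$. Hence the real part of the exponent is at most $\delta|\xi|-2a_0|\xi|=(\delta-2a_0)|\xi|$. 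Shrinking $\delta$ further if necessary so that $\delta<a_0$, we obtain $\Re\{i(\tau-w)\xi-|\xi|p(\tau-w)\}\leq -a_0|\xi|$ uniformly for $\tau\in K'$, $w\in K_\delta$, $\xi\in\RN$. Setting $a:=a_0$ and feeding this into the defining estimate \eqref{deffuncanal} for $\mu\in\mathcal{O}'(K_\delta)$ — applied to the entire function $w\mapsto e^{i(\tau-w)\xi-|\xi|p(\tau-w)}$ — yields
\begin{equation*}
|\mathcal{F}_p\mu(\tau,\xi)|\leq C_\delta c_p\sup_{w\in K_\delta}e^{\Re\{i(\tau-w)\xi-|\xi|p(\tau-w)\}}\leq C_\delta c_p\,e^{-a|\xi|},
\end{equation*}
which is exactly \eqref{oasiy7t}.

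The only delicate point is the uniformity in the geometric setup: one must check that a single $\delta>0$ can be chosen so that $\tau-w\in\Gamma_\rho$ holds for \emph{all} $\tau\in K'$ and $w\in K_\delta$ at once, rather than pointwise. This follows from compactness — the map $(\tau,w)\mapsto\tau-w$ carries the compact set $K'\times K$ into $\{x\in\RN:|x|\geq d\}$, which meets $\Gamma_\rho$'s complement only in its boundary directions in a controlled way, so a uniform $\delta$-enlargement stays inside $\Gamma_\rho$ — and is the same kind of argument already used in the proof of the preceding lemma to produce the constant $A=A(K,\delta,\rho)$. Beyond that, the argument is a routine exponential estimate, and the constant $C_\delta$ is precisely the one attached to $\mu$ via \eqref{deffuncanal}, as claimed in the statement.
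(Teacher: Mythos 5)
Your argument is correct and follows essentially the same route as the paper: bound the real part of the exponent, use the disjointness of $K$ and $K'$ to keep $\tau-w$ in $\Gamma_\rho$ for small $\delta$, apply \eqref{DesParComp}, and shrink $\delta$ to win. One small slip: $\Re\{i(\tau-w)\xi\}=+\Im w\cdot\xi$ (not $-\Im w\cdot\xi$), but since you only use the absolute-value bound $|\Im w||\xi|\le\delta|\xi|$ this does not affect the estimate.
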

\begin{proof}
All we have to do is to prove an estimate for the exponential in the right hand-side of \eqref{ib6f}:
\begin{equation*}
\Re \{i(\tau-w)\xi -|\xi| p(\tau-w)\}=  \Im w \cdot \xi - |\xi|\Re \{ p(\tau-w) \}
\end{equation*}
when $w \in K_{\delta}$, $\tau\in K'$ and for a $\delta>0$ to be chosen. 
%
Let $0<\sigma\le\tfrac12 \dist(K,K')$ then
when $\tau \in K'$ and $w \in K_{\delta}$ we  have $|\tau- \Re w|>\sigma$. 
Now we choose $\delta<\rho\sigma.$ Therefore,  it follows  from the choice of $\delta$ that
\begin{equation}\label{ajnrf}
|\Im(\tau- w)|= |\Im w|\le \delta<\rho\sigma \leq \rho |\tau- \Re w|, \quad \forall \tau \in K', \, \forall w \in K_{\delta}.
\end{equation}
Thus, as a consequence of \eqref{ajnrf} and  \eqref{DesParComp}, we obtain
\begin{align}
  \Re \{i(\tau-w)\xi -|\xi| p(\tau-w)\}&\leq   |\xi|(\delta - c'|\tau-w|^{2k})\nonumber\\
  &\leq -|\xi|(c'\sigma^{2k}-\delta), \label{finalinequality46}
\end{align}
for every $\tau \in K'$ and $ w \in K_{\delta}$.
Diminishing $\delta$, we can assume that $a:= c'\sigma^{2k}-\delta $ is a positive number. Thus, inequality \eqref{oasiy7t} follows from \eqref{ib6f} and \eqref{finalinequality46}. 
\end{proof}

As a consequence, if $\mu_1, \mu_2 \in \mathcal{O}'(\overline{U})$ are two representatives of the same hyperfunction, then their difference, $\mu_1-\mu_2$ is an analytic functional carried by $\del U.$ Given $\tau_0 \in U$, we  choose a neighborhood $V\subset\subset U$ of $\tau_0$ in $\RN$. Thus, Proposition~\ref{Invariance} with $K = \partial U$ and $K'=\overline V$, implies that there are positive constants $C$ and $a$ such that
\begin{align*}
  |\mathcal{F}_{p}(\mu_1-\mu_2)(\tau, \xi)| \leq  C e^{-a|\xi|}, \quad \forall \tau \in V, \xi \in \RN.
\end{align*}   

With this inequality in hand it will be readily to verify that the FBI estimates are invariants of the hyperfunction in the sense that changing the representative of the hyperfunction only affects the estimates by a term that decays exponentially.

We will conclude this section with an observation on the microlocal regularity conditions $(\mathfrak{R}_1)$ and $(\mathfrak{R}_2)$. Let $x_0 \in \RN $ and $\xi_0\in \RN\setminus \{0\}$. Assume that $u \in \mathcal{B}(\RN)$ and $\mu\in \mathcal{O}'(\overline{U})$ is a representative of $u$ in $U$ with the property that there are an open neighborhood $V\subset\subset U$ of $x_0$, $\Gamma$ a conic neighborhood of $\xi_0$ and constants $c_1, c_2>0$ such that
  \begin{align*}
    |\mathcal{F}_{p}\mu(\tau, \xi)|\leq c_1 e^{-c_2|\xi|}, \quad \forall (\tau, \xi) \in V\times \Gamma.
  \end{align*}
Let $W\subset U$ and consider $\nu$ the restriction of $\mu$ to $W$. We have that $\mu-\nu\in \mathcal{O}'(\overline{U} \setminus W)$ thus, Proposition~\ref{Invariance} guarantees that if $V'\subset\subset V\cap W$, then there are $C_1,C_2>0$ such that
\begin{align*}
  |\mathcal{F}_p\nu(\tau, \xi)|\leq C_1e^{-C_2|\xi|}, \quad \forall(\tau, \xi) \in V'\times \Gamma.
\end{align*}
This means that if either $(\mathfrak{R}_1)$ or $(\mathfrak{R}_2)$ holds for $\mu$ a representative of $u$ in $U$, then the same property holds for the restriction of $\mu$ to any open neighborhood of $x_0$ contained in $U$.

\section{Proof of Theorem~\ref{Theo2.4}}\label{secProfTheo2.4}

We devote this section to prove Theorem~\ref{Theo2.4}. We will start with estimates for the FBI transform of boundary value of holomorphic functions in opposite cones. With this estimates in hand, we can generalize Theorem~\ref{Theo2.4} to some  subsheaves of $\mathcal{B}$. When  $\mathcal S$ is one of these subsheaves we can  exploit the concept of microlocal regularity with respect to $\mathcal{S}$ and reduce the proof to obtain  estimates for the FBI transform of sections of $\mathcal{S}$.

The second part of the proof explores the FBI inversion formula to obtain a general decomposition of a representative of a hyperfunction as sum of analytic functionals. In this sum we can isolate one analytic functional that carries the crucial information regarding the regularity in the studied direction (the other analytic functional are representatives of boundary value of holomorphic functions in  cones opposing the given direction). The regularity of the isolated part is characterized by estimates of its FBI transform and, therefore, there is a dictionary that associate estimates of the FBI transform with microlocal regularity regarding a subsheaf.  

\subsection{Boundary values of holomorphic functions in opposite cones}

We start proving that microlocal analyticity implies an exponential decay of the generalized FBI transform. The first step is the case where the hyperfunction is the boundary value of a holomorphic function.

\begin{Pro}\label{FBIBHBV}
Let $\xi_0 \in \RN$ and $\Gamma$ be a convex cone in $ \RN\setminus \{0\}$ which is opposite to $\xi_0$. Then given $x_0\in \RN$ and  open neighborhoods $V\subset\subset U \subset\subset \RN$ of $x_0$,  there is an open cone $\Gamma'\subset \RN$ with $\xi_0 \in \Gamma'$ such that, if  $\nu_g$ is an analytic functional that represents the boundary value of $g \in \mathcal{O}(\mathcal{W}_{\delta_0}(U;\Gamma))$, $0<\delta_0<1$, then  there exist positive numbers $C,R$ for which the inequality
  \begin{equation}\label{DesRA}
    |\mathcal{F}_{p}\nu_{g}(\tau, \xi)|\leq C e^{-R |\xi|}, 
  \end{equation}
  holds for every $\tau \in V$ and $\xi\in \Gamma'$.
\end{Pro}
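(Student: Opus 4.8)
The plan is to estimate the exponent $\Re\{i(\tau-w)\xi - |\xi| p(\tau-w)\}$ when $\tau$ is real near $x_0$ and $w$ ranges over a complex neighborhood of $\overline V$ that reflects how the analytic functional $\nu_g$ is carried. The key structural fact to exploit is item $(i)$ of Remark~\ref{faluh}: the boundary value $\nu_g$ differs from the ``tilted'' functional $\mu_{g,V'}^{y}$ (integration of $g(z)h(z)$ over $V'+iy$, for $V'$ slightly larger than $\overline V$) by an analytic functional carried by a small complex neighborhood of $\partial V'$, as long as $y\in\Gamma$ with $|y|$ small. I would first fix such a $V'$ with $\overline V\subset\subset V'\subset\subset U$ and split $\nu_g = \mu_{g,V'}^{y} + (\nu_g - \mu_{g,V'}^{y})$. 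For the second term, which is carried by $\partial V'$, Proposition~\ref{Invariance} with $K=\partial V'$ and $K'=\overline V$ immediately gives an estimate $|\mathcal{F}_p(\nu_g-\mu_{g,V'}^{y})(\tau,\xi)|\le C e^{-a|\xi|}$ for all $\tau\in V$, $\xi\in\RN$, with no restriction on $\xi$; so that term is harmless.

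The main work is to estimate $\mathcal{F}_p\mu_{g,V'}^{y}(\tau,\xi)$. By definition,
\begin{equation*}
\mathcal{F}_p\mu_{g,V'}^{y}(\tau,\xi) = c_p\int_{V'+iy} g(z)\, e^{i(\tau-z)\xi - |\xi| p(\tau-z)}\, \dd z,
\end{equation*}
so writing $z = x + iy$ with $x\in V'$ the modulus of the integrand is bounded by $\sup|g|$ times $\exp\big(\Re\{i(\tau - x - iy)\xi - |\xi| p(\tau - x - iy)\}\big)$, i.e. by $\exp\big(y\cdot\xi - |\xi|\Re p(\tau - x - iy)\big)$. The idea is: since $\tau-x$ is small and real and $|y|\le\delta_0$ is small, $\tau - x - iy$ stays in the cone $\Gamma_\rho$ where \eqref{DesParComp} applies, so $\Re p(\tau-x-iy)\ge c'|\tau-x-iy|^{2k}\ge -C''|y|^{2k}$ type lower bounds can be extracted; more carefully, $\Re p(\tau-x-iy)$ can be made $\ge -C_1|y|$ (absorbing the real contribution, which is nonnegative, and controlling cross terms) uniformly for $\tau\in V$, $x\in V'$ with $|\tau-x|$ bounded. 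Hence the exponent is $\le y\cdot\xi + C_1|y||\xi|$. Now choose $\Gamma'$ to be a narrow conic neighborhood of $\xi_0$: since $\Gamma$ is convex and opposes $\xi_0$, i.e. $\xi_0\cdot y < 0$ for $y\in\Gamma$, we have $\xi\cdot y \le -c_3|\xi||y|$ for $\xi\in\Gamma'$ and a suitable $y\in\Gamma$ chosen once and for all (say $|y|=\delta_1$ small) with $c_3>0$, by compactness of the relevant pieces of the sphere. Then the exponent is $\le -c_3\delta_1|\xi| + C_1\delta_1|\xi| = -(c_3 - C_1)\delta_1|\xi|$, and after shrinking $\delta_0$ (hence the allowed $|y|$ and the size of $|\tau-x|$) so that $C_1 < c_3$, this is $\le -R|\xi|$ with $R>0$. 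Integrating over the bounded set $V'$ gives \eqref{DesRA} for that term, and combining with the harmless term finishes the proof.

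The hard part will be making the estimate $\Re p(\tau - x - iy) \ge -C_1|y|$ (or whatever the correct linear-in-$|y|$ bound is) genuinely uniform and compatible with the simultaneous requirement that the ``tilting'' identity from Remark~\ref{faluh}$(i)$ holds — that is, coordinating the two small parameters: the $\epsilon$ from Remark~\ref{faluh}$(i)$ controlling how small $|y|$ must be for the carrier statement, and the $\delta_0$ and the size of $V'$ controlling the polynomial estimate. One must also be slightly careful that $g$ is only defined on the wedge $\mathcal{W}_{\delta_0}(U;\Gamma)$, so the contour $V'+iy$ must lie inside it, which forces $\overline{V'}\subset U$ and $y\in\Gamma$, $|y|<\delta_0$ — consistent with the hypotheses. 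A secondary technical point is that $\Re p$ need only be controlled, not $p$ itself, and that \eqref{DesParComp} gives both upper and lower bounds on $\Re p$ in $\Gamma_\rho$, which is exactly what is needed to bound the cross terms; I would expand $p(\tau-x-iy)$ and use homogeneity plus \eqref{DesParComp} applied to nearby real points, rather than trying to estimate $p$ of a complex argument directly. Once the uniform linear bound is in place, the choice of $\Gamma'$ via convexity of $\Gamma$ and the opposition condition \eqref{laliuwe} is routine.
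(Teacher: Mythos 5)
Your proposal takes a genuinely different, and in fact more elementary, route than the paper. The reduction steps coincide: both pass from $\nu_g$ to the ``tilted'' functional $\mu_{g,V'}^{y}$ (the paper calls it $\mu_{g,W}^{y}$) by using Remark~\ref{faluh} together with Proposition~\ref{Invariance} to dispose of the carrier-of-$\partial V'$ and carrier-of-$\overline U\setminus V'$ pieces. The divergence is in how the tilted integral $\int_{V'+iy} g\,e^{i(\tau-z)\xi-|\xi|p(\tau-z)}\,\dd z$ is estimated. The paper performs a second, $\tau$-dependent contour deformation $\theta(x)=x-i\delta\varphi(x)\xi/|\xi|$ (Stokes' theorem, inequalities \eqref{29458yha}, \eqref{;.pro,jvypor}, \eqref{;pocdr}), so that the decay $-R|\xi|$ comes entirely from the deformation term $-\delta\varphi(x)|\xi|$; the opposition $y\cdot\xi<0$ is then used only qualitatively (item (ii) of Step~1). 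You instead estimate directly on the undeformed contour $V'+iy$ for a single fixed $y\in\Gamma$, extracting decay from the \emph{quantitative} angular bound $y\cdot\xi\le -c_3|y||\xi|$ on a sufficiently narrow $\Gamma'$ and then absorbing the error $-|\xi|\Re p(\tau-x-iy)$. Each approach buys something: yours avoids Stokes' theorem and the machinery of Step~1~(iii) and \eqref{lfhr7464} (which is needed in the paper precisely to keep the deformed contour inside the wedge where $g$ is holomorphic), while the paper's deformation argument does not require $\Gamma'$ to be angularly bounded away from the orthogonal complement of $y$.

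One imprecision worth correcting: the uniform lower bound on $\Re p$ is not linear in $|y|$. Since $p$ is homogeneous of degree $2k$, the correct estimate is $\Re p(\tau-x-iy)\ge -C|y|^{2k}$ uniformly in real $\tau,x$: when $|\tau-x|\ge\rho^{-1}|y|$ one has $\tau-x-iy\in\Gamma_\rho$ and $\Re p\ge 0$ by \eqref{DesParComp}, while when $|\tau-x|<\rho^{-1}|y|$ one has $|\tau-x-iy|\lesssim|y|$, so $|p(\tau-x-iy)|\lesssim|y|^{2k}$ by homogeneity. Your ``$\ge -C_1|y|$ with $C_1$ small after shrinking $\delta_0$'' can be read as $C_1=C\delta_0^{2k-1}$, which is consistent, but the clean bound is the degree-$2k$ one. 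Happily, $|y|^{2k}=o(|y|)$ for $k\ge 1$, so the correct bound is \emph{better} than the one you invoked: the exponent is $\le -|y|(c_3-C|y|^{2k-1})|\xi|\le -\tfrac{c_3}{2}|y||\xi|$ for $|y|$ small, and fixing $|y|=\delta_1$ gives $R=c_3\delta_1/2>0$. With that correction your argument closes, and the coordination of parameters you flag (the $\epsilon$ from Remark~\ref{faluh}(i) and the size of $|y|$) is unproblematic since both are ``take $|y|$ small enough'' conditions. A small wording fix: $\nu_g-\mu_{g,V'}^{y}$ is carried not just by a neighborhood of $\partial V'$ but by a neighborhood of $\overline U\setminus V'$ (one must also account for $\nu_g-\mu_{g,V'}$); since this compact set is still disjoint from $\overline V$, Proposition~\ref{Invariance} applies all the same.
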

\begin{proof}
  For a better understanding, we will divide the proof in three steps.

\noindent{\bf Step 1: Some reductions.} 
Let $g \in \mathcal{O}(\mathcal{W}_{\delta_0}(U;\Gamma))$ and let $\nu_g$ be an analytic functional in $U$ that represents $\bb g$. Fix $V\subset\subset W\subset\subset  U$ neighborhoods of $x_0$, $W$ with smooth boundary. 
By Definition~\ref{def:BV}, the restriction of $\bb g$ to $W$ is represented by $\mu_{g, W}$ and consequently $\nu_g- \mu_{g,W} \in \mathcal{O}'(\overline{U}\setminus W)$. By Proposition \ref{Invariance}, it will be enough to prove inequality \eqref{DesRA} for $\mu_{g,W}$.

Choose $\tilde{\delta}$ small such that $\tilde{d}:= d \big((\del W)_{\tilde{\delta}}, V\big)>0$. By the definition of boundary values, Definition~\ref{def:BV}, we can find $\epsilon>0$ such that $\mu_{g,W}-\mu_{g,W}^y \in \mathcal{O}'((\del W)_{\tilde{\delta}})$ for every $y\in\Gamma$ with $|y|<\epsilon$. 
Again, thanks to Proposition \ref{Invariance}, it is sufficient to prove inequality \eqref{DesRA} for $\mu_{g,W}^{y}$. From now on, we will fix the direction  $y$, i.e., we may  reduce its norm but we will not change its direction.

In what follows we note that using linear algebra arguments (see Appendix \ref{AppA1} applied for $\xi_0/|\xi_0|$) the following is true:
\begin{enumerate}
\item[(i)] We can find $\tilde{\Gamma}$ a conic neighborhood of $y$ such that $\tilde{\Gamma}\subset \Gamma$. 
\item[(ii)] Since $\xi_0 \cdot y<0$ there is an open cone $\Gamma'$ in $\RN\setminus\{0\}$ with $\xi_0 \in \Gamma'$ such that $\xi\cdot y< 0$ for all $\xi \in \Gamma'$.
\item[(iii)] There is $\lambda<1$ with the property that if $\xi \in \Gamma'$ is such that $|\xi| \leq \lambda |y|$ then $ y - \xi$ belongs to $K$ a compact subset of $\tilde{\Gamma}$ ---it is important to point out that if we reduce the norm of $y$ then the latter holds with the same $\lambda,$ $\Gamma'$ and $y-\xi$ lying in a smaller compact subset of $\tilde{\Gamma}$.
\end{enumerate}

\noindent{\bf Step 2: Estimates on the phase of the FBI transform.} 
From now on, the pair $(\tau, \xi) \in V\times \Gamma'$ is arbitrary, we fix $\rho\in (0,1)$ such that the inequality \eqref{DesParComp} holds and choose $\delta\in (0,\delta_0)$. Since $p(0)=0$ there exists $\sigma\in (0,1)$ such that 
\begin{equation}\label{oaihe}
|\Re p(z)|\leq 1 \ \text{ for every } \  z \in B_\sigma^{\C}(0).
\end{equation}
 Now, if $\tau, x, a \in \RN$ are such that $|\tau- x|\leq \delta \sigma/2$ and $|a|\leq \sigma/2$ then
$|(\tau- x)/\delta +ia| \leq \sigma$ and we can use the homogeneity of $p(z)$ together with \eqref{oaihe}, to obtain
\begin{align}\label{lqwugtt}
  |\Re p(\tau- x+ i \delta a)|= \delta^{2k} |\Re p((\tau- x)/\delta +i a)|\leq \delta^{2k},\quad\forall\, x\in B_{\delta\sigma/2}(\tau),\, |a|\leq \sigma/2.
\end{align}
Assume, without loss of generality,  that $|y|= \delta\rho\sigma/4$. Let $r= \delta \sigma/2$ and  assume that $2r< \tilde{d}$ (shrinking $\delta$ if necessary) in order to have $B_{2r}(\tau)\subset W$ for every $\tau \in V$.

Choose $\phi \in \Cinf_c(B_{2r}(0))$ such that $0\leq \phi\leq \rho\lambda \sigma /4$, $\phi(x)\equiv  \rho\lambda \sigma /4$ in $B_r(0)$ and, for each $\tau\in V,$ define $\varphi\in \Cinf_{c}(W)$ as $\varphi(x)= \phi(x+\tau)$. Although the function $\varphi$ depends on $\tau$, all the inequalities obtained bellow will hold uniformly for any $\tau\in V$.
Then, by the construction of the compact set $K$ we see that
\begin{equation}\label{lfhr7464}
y - \delta \varphi(x) \xi/|\xi|\in K\quad\text{  for all }\, \xi\in \Gamma', \text{ and } \, x \in W.
\end{equation}

$\bullet$ If $x \in B_r(\tau),$ then $\varphi(x)= \rho\lambda \sigma /4< \sigma/4$ and we can use inequality  \eqref{lqwugtt} to obtain that 
$\big|\Re p\big(\tau- x-iy + i\delta\varphi(x)\xi/|\xi|\big)\big|\leq \delta^{2k}$ and, consequently, 
\begin{align}\label{29458yh}
  \delta \varphi(x) + \Re p\big(\tau- x -iy + i\delta\varphi(x)\xi/|\xi|\big)\geq \frac{\rho\delta \lambda \sigma }{4} -\delta^{2k}, \quad \forall\, x \in B_r(\tau), \,\xi\in\RN.
\end{align}
By diminishing $\delta$ if necessary we can assume that $c:= \frac{\rho\delta \lambda \sigma }{4} -\delta^{2k}>0$
and rewrite inequality \eqref{29458yh} as
\begin{align}\label{29458yha}
  \delta \varphi(x) + \Re p\big(\tau- x -iy + i\delta\varphi(x)\xi/|\xi|\big)\geq c, \quad \forall\, x \in B_r(\tau), \,\xi\in\RN.
\end{align}

$\bullet$ If $x\in W \setminus B_r(\tau)$, then
\begin{align}\label{87tw4}
  \bigg|y-\delta\varphi(x)\frac{\xi}{|\xi|}\bigg| \leq \frac{\rho\delta\sigma}{4}+ \frac{ \lambda\rho\delta\sigma}{4}= \frac{(1+\lambda)}{2} \rho r\leq \rho|\tau-x|.
\end{align}
With \eqref{87tw4} in mind, we are allowed to use \eqref{DesParComp} to obtain the following inequality
\begin{equation}\label{;.pro,jvypor}
\Re p(\tau-x -iy+ i \delta\varphi(x)\xi/|\xi|)\geq c' |\tau-x|^{2k}\geq  c'r^{2k}, \quad \forall\, x \in W\setminus B_r(\tau), \,\xi\in\RN.
\end{equation}
%
%
%
Let $R:= \min\{c, c'r^{2k}\}>0$ thus, it follows from \eqref{29458yha} and \eqref{;.pro,jvypor}, that
\begin{align}\label{;pocdr}
\delta \varphi(x) + \Re p(\tau-x- iy+ i \delta \varphi(x)\xi/|\xi|)\geq R,\quad \forall\,x \in W, \,\xi\in\RN.
\end{align}

\noindent{\bf Step 3: FBI transform estimate.}
Moving on, recall that the generalized FBI transform of the analytic functional $\mu_{g,W}^{y}$ is given by 
\begin{equation}\label{akeur}
\mathcal{F}_{p}\mu_{g,W}^{y}(\tau, \xi)=  c_{p}\int_W g(x+i y)  e^{i(\tau-x-iy)\xi-|\xi| p(\tau-x-iy)}\dd x
\end{equation}
We will now deform the contour of integration on the integral in the right hand-side of \eqref{akeur}, from $W$ to its image under the map 
\begin{equation}\label{loerihn}
\theta(x)= x - i\delta \varphi(x)\xi/|\xi|.
\end{equation}
and we  exploit the  fact that the  $N$-form $g(z)e^{i(\tau-z)\xi-|\xi|p(\tau-z)}\dd z_1\wedge \ldots \wedge\dd z_N$ is holomorphic in $W+ i\Gamma_{\delta_0}$. Hence, applying Stokes' theorem, one can rewrite the expression in \eqref{akeur}, using \eqref{loerihn}, as 
\begin{align}\label{lienurt}
\mathcal{F}_{p}\mu_{g,W}^{y}(\tau, \xi)&=  c_{p}\int_{ W} g\big(x+iy-i\delta \varphi(x)\tfrac{\xi}{|\xi|}\big)  e^{i(\tau-\theta(x)-iy)\xi-|\xi| p(\tau-\theta(x)-iy)} |\!\det \theta'(x)|\dd x.
\end{align}

We are now ready to conclude the proof. In fact,  estimating  \eqref{lienurt}, we have
\begin{align}\label{uyg6}
|\mathcal{F}_{p}\mu_{g,W}^{y}(\tau, \xi)|&\leq   c_{p}\int_{ W} \big| g\big(x+ iy-i\delta \varphi(x)\tfrac{\xi}{|\xi|}\big)\big|  e^{\Theta(x,\tau,\xi)} |\det \theta'(x)|\dd x,
\end{align}
where, in view of \eqref{;pocdr}, we can estimate
\begin{align}\label{inurhri}
\Theta(x,\tau,\xi)&:=\Re \{ i(\tau-\theta(x)-iy)\xi-|\xi| p(\tau-\theta(x)-iy)\}
\notag\\[4pt]
 &= y\cdot \xi-\delta \varphi(x)|\xi|-|\xi| \Re p(\tau- \theta(x)-i y)
 \notag\\[4pt]
& \leq - R|\xi|,  
\end{align}
for every $x \in W$, $\xi \in \Gamma'$ and $\tau\in V$.
Define the constant $A$ by
\begin{equation} \label{bjtfvvdvd}
A:=    c_{p} \Big(\sup_{\overline{W}+i K} |g(z)|\Big) \Big( \sup_{\RN}\{|\det \theta'(x)|\}\Big)m(W) < \infty.
\end{equation}
Note that $ \sup_{\RN}\{|\det \theta'(x)|\}$ is independent of $\tau$ since $\varphi$ is the composition of $\phi$ with the translation by $\tau$.
Collectively, from \eqref{uyg6}, \eqref{inurhri}, \eqref{bjtfvvdvd} and keeping in mind property \eqref{lfhr7464}, we obtain
\begin{align*}
|\mathcal{F}_{p}\mu_{g,W}^y(\tau, \xi)|&\leq  A e^{-R|\xi| }, \quad \forall (\tau,\xi) \in V\times\Gamma'
\end{align*}
as we wished to prove. \qedhere
\end{proof}

\begin{Cor}\label{WFSSBVH}
  Let $(x_0, \xi_0)\in \RN\times(\RN\setminus \{0\})$ and assume that there exist an open neighborhood $U$ of $x_0$ and finitely many open convex cones $\Gamma_j\subset \RN\setminus \{0\}$, $j= 1, \ldots, \ell$ that are opposite for $\xi_0$.
  Then, for every  open neighborhood $V\subset\subset U$ of $x_0$, there exists an open cone $\Gamma'\subset \RN$ with $\xi_0 \in \Gamma'$ such that for every family of holomorphic functions $F_j\in \mathcal{O}(\mathcal{W}_\delta(U; \Gamma_j)),$ $j\in\{1,\dots,\ell\}$, where $\delta>0$ is fixed, and any analytic functional $\nu$ that represents $\sum_{j=1}^{\ell}\bb F_j$ in $U$, there exist positive numbers $C$ and $R$ for which the inequality
  \begin{equation}\label{DesRA2}
    |\mathcal{F}_{p}\nu(\tau, \xi)|\leq C e^{-R |\xi|}, 
  \end{equation}
holds for every $\tau \in V$ and $\xi\in \Gamma'$.
\end{Cor}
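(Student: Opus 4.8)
The plan is to reduce Corollary~\ref{WFSSBVH} to Proposition~\ref{FBIBHBV} together with the invariance of the FBI estimates under change of representative (the consequence of Proposition~\ref{Invariance} recorded after its proof). First I would observe that both the hypotheses and the conclusion only involve a fixed open neighborhood $V\subset\subset U$ of $x_0$: apply Proposition~\ref{FBIBHBV} once to each pair $(\xi_0,\Gamma_j)$ with the same $V\subset\subset U$. This yields, for each $j\in\{1,\dots,\ell\}$, an open cone $\Gamma'_j\ni\xi_0$ with the property that for any $F_j\in\mathcal{O}(\mathcal{W}_{\delta_0}(U;\Gamma_j))$ and any analytic functional $\nu_{F_j}$ representing $\bb F_j$ in $U$, there are $C_j,R_j>0$ with $|\mathcal{F}_p\nu_{F_j}(\tau,\xi)|\leq C_j e^{-R_j|\xi|}$ for all $\tau\in V$ and $\xi\in\Gamma'_j$. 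Then I would set $\Gamma':=\bigcap_{j=1}^{\ell}\Gamma'_j$, which is again an open cone containing $\xi_0$ since it is a finite intersection, and $R:=\min_j R_j>0$.

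Next I would handle the sum. Given a family $F_j\in\mathcal{O}(\mathcal{W}_\delta(U;\Gamma_j))$ (replacing $\delta$ by $\min(\delta,\delta_0)$ if necessary so that each $F_j$ restricts to the wedge required by Proposition~\ref{FBIBHBV}), pick for each $j$ an analytic functional $\nu_{F_j}\in\mathcal{O}'(\overline{U})$ representing $\bb F_j$ in $U$; then $\sum_{j=1}^{\ell}\nu_{F_j}$ is a representative in $U$ of $\sum_{j=1}^{\ell}\bb F_j$. By linearity of the FBI transform in the analytic functional,
\begin{align*}
\Bigl|\mathcal{F}_p\Bigl(\sum_{j=1}^{\ell}\nu_{F_j}\Bigr)(\tau,\xi)\Bigr|
\leq \sum_{j=1}^{\ell}|\mathcal{F}_p\nu_{F_j}(\tau,\xi)|
\leq \Bigl(\sum_{j=1}^{\ell}C_j\Bigr) e^{-R|\xi|},
\qquad \forall\,(\tau,\xi)\in V\times\Gamma'.
\end{align*}
This establishes \eqref{DesRA2} for the particular representative $\sum_j\nu_{F_j}$, with $C:=\sum_j C_j$.

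Finally I would upgrade this to an arbitrary representative $\nu$ of $\sum_{j=1}^{\ell}\bb F_j$ in $U$. If $\nu$ is any such representative, then $\nu-\sum_j\nu_{F_j}\in\mathcal{O}'(\del U)$ is an analytic functional carried by $\del U$. Since $V\subset\subset U$, the compact sets $K=\del U$ and $K'=\overline{V}$ are disjoint, so Proposition~\ref{Invariance} gives constants $a>0$ and a constant (depending on $\nu-\sum_j\nu_{F_j}$) such that $|\mathcal{F}_p(\nu-\sum_j\nu_{F_j})(\tau,\xi)|\leq C'e^{-a|\xi|}$ for all $\tau\in V$, $\xi\in\RN$ — in particular for $\xi\in\Gamma'$. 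Adding this to the estimate for $\sum_j\nu_{F_j}$ and shrinking $R$ to $\min\{R,a\}$, absorbing constants, yields \eqref{DesRA2} for $\nu$, completing the proof. The only mild subtlety — which I would address by the reduction in the first paragraph — is making sure the cone $\Gamma'$ and the constant $R$ can be chosen uniformly, i.e. independently of the particular $F_j$'s and of $\nu$; this is exactly what Proposition~\ref{FBIBHBV} and Proposition~\ref{Invariance} deliver, since in those statements $\Gamma'$, $R$, and $a$ depend only on the cones $\Gamma_j$ and on $V\subset\subset U$, not on the holomorphic functions themselves.
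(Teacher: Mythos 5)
Your proposal is correct and follows essentially the same route as the paper: apply Proposition~\ref{FBIBHBV} to each $\Gamma_j$, intersect the resulting cones $\Gamma'_j$, and take $R=\min_j R_j$. The only cosmetic difference is at the end — the paper simply chooses the representatives $\nu_{F_j}$ so that $\nu=\sum_j\nu_{F_j}$ (possible because Proposition~\ref{FBIBHBV} holds for \emph{any} representative of each $\bb F_j$), while you instead fix arbitrary $\nu_{F_j}$'s and invoke Proposition~\ref{Invariance} once more to absorb the discrepancy carried by $\del U$; both are valid and rely on the same two ingredients.
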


\begin{proof}
Let $(x_0, \xi_0)$, $U,V$ and  $F_j \in \mathcal{O}(\mathcal{W}_\delta(U; \Gamma_j))$ for $j\in\{1,\dots,\ell\}$ as in the hypothesis.
From Proposition \ref{FBIBHBV} we know that, for each $j\in\{1,\dots,\ell\}$, there exist  conic neighborhoods $\Gamma_j'$ of $\xi_0$ such that if  $\nu_{F_j}$ is a representative of $\bb F_j$, then there are positive constants $R_j$ and $C_j$ for which the inequality 
  \begin{equation}\label{o7y}
    |\mathcal{F}_{p}\nu_{F_j}(\tau, \xi)|\leq C_j e^{-R_j|\xi|}, 
  \end{equation}
holds for every $\tau \in V$ and $\xi\in \Gamma'_j$.  
Define  $\Gamma':= \Gamma_1'\cap \cdots \cap \Gamma_\ell'$, $C:= \max_j C_j$ and $R:= \min_j R_j$. Thus, it follows from \eqref{o7y} that
  \begin{align*}
    |\mathcal{F}_{p}\nu(\tau, \xi)|&= \bigg|\sum_{j=1}^{\ell} \mathcal{F}_{p}\nu_{F_j}(\tau, \xi)\bigg|\leq \ell C e^{-R|\xi|},
  \end{align*}
for every $(\tau, \xi) \in V\times \Gamma',$ proving \eqref{DesRA2}.
\end{proof}

In particular, Corollary \ref{WFSSBVH} ensures that if $(x_0, \xi_0)\notin WF_{C^{\omega}}(u),$ then condition $(\mathfrak{R}_1)$ holds at $(x_0, \xi_0)$ proving one implication of Theorem~\ref{Theo2.4}. 

\subsection{Sharp analysis of the FBI inversion formula}

Let us take a closer look at the FBI inversion formula described in Proposition~\ref{fbibhif}. 

Let $V\subset\subset U$ be open neighborhoods of $x_0$ in $\RN$. Let $U\subset\subset W'$ be any open set in $\RN$ with smooth boundary and let $\chi$ be the characteristic function of  $W'$.
Given $u \in \mathcal{B}(\RN),$ let $\mu\in \mathcal{O}'(\overline{U})$ be a representative of $u$ in $U$. We consider the sequence given by the inversion formula \eqref{FBI:inversion}:
  \begin{align}\label{oieur}
    \mu_\epsilon(x)= \bigg( \int_{ \RN} \int_{\RN} e^{i(x- \tau)\xi -\epsilon |\xi|^{2}} \mathcal{F}_{p}\mu(\tau, \xi) |\xi|^{\frac{N}{2k}} \dd \tau \dd \xi\bigg) \frac{\chi (x)}{(2\pi)^{N}}
  \end{align}
and write $\mu_\epsilon(x):= \mu_0^{\epsilon}(x)+ \mu_1^{\epsilon}(x)$ where
  \begin{align}\label{oieurgbi}
    \mu_0^\epsilon(x):= \bigg( \int_{ \RN} \int_{|\tau- x_0|\leq a} e^{i(x- \tau)\xi -\epsilon |\xi|^{2}} \mathcal{F}_{p}\mu(\tau, \xi) |\xi|^{\frac{N}{2k}} \dd \tau \dd \xi\bigg) \frac{\chi (x)}{(2\pi)^{N}}
  \end{align}
and $a$ is a small real number such that $B_{a}(x_0) \subset V$.

\begin{Lem}\label{lemahdiur}
Let $\mathcal{S}$ be any subsheaf of $\mathcal B$ such that $C^\omega$ is a subsheaf of $\mathcal S$. If $\mu_0^{\epsilon} $ converges in $\mathcal{O}'(\overline{W'})$ to an analytic functional $\mu_0 \in \mathcal{O}'(\overline{W'})$, then  $(x_0, \xi_0)\in WF_{\mathcal{S}}(u)$ if and only if $(x_0, \xi_0) \in WF_{\mathcal{S}}(u_0)$, where $u_0\in \mathcal{B}(W')$ is represented by $\mu_0$.
\end{Lem}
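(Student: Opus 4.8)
The strategy is to show that the ``other part'' $\mu_1^\epsilon := \mu_\epsilon - \mu_0^\epsilon$ converges in $\mathcal{O}'(\overline{W'})$ to an analytic functional $\mu_1$ whose associated hyperfunction is microlocally $\mathcal{S}$-regular at $(x_0,\xi_0)$ — in fact, microlocally $C^\omega$-regular there — so that, since $\mathcal{F}_p$ is linear and $u = \bb g$-type decompositions behave additively, the $\mathcal{S}$-wave-front set of $u$ at $(x_0,\xi_0)$ is determined entirely by $u_0$. Since $C^\omega \subset \mathcal{S}$, microlocal $C^\omega$-regularity of $u_1$ implies microlocal $\mathcal{S}$-regularity of $u_1$, and then $(x_0,\xi_0)\in WF_{\mathcal{S}}(u)$ iff $(x_0,\xi_0)\in WF_{\mathcal{S}}(u_0)$ follows from the fact (recorded after Definition~\ref{alsiu980}, via the additive structure of the sums of boundary values in Definition~\ref{alsiu980}) that $u = u_0 + u_1$ on a neighborhood of $x_0$ once $\mu = \mu_0 + \mu_1$ and both pieces converge.

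First I would set up the convergence of $\mu_1^\epsilon$. Since $\mu_\epsilon \to \mu$ in $\mathcal{O}'(\overline{W'})$ by Proposition~\ref{fbibhif}, it suffices to show $\mu_0^\epsilon$ converges — which is the hypothesis — hence $\mu_1^\epsilon = \mu_\epsilon - \mu_0^\epsilon$ converges to $\mu_1 := \mu - \mu_0 \in \mathcal{O}'(\overline{W'})$. Call $u_1 \in \mathcal{B}(W')$ the hyperfunction represented by $\mu_1$; then $u = u_0 + u_1$ in $\mathcal{B}(W')$, hence in a neighborhood of $x_0$.

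Next, the heart of the argument: I would show $(x_0,\xi_0)\notin WF_{C^\omega}(u_1)$, equivalently that $\mu_1$ satisfies an estimate of type $\mathfrak{M}(C^\omega)$ on $V'\times\Gamma$ for a suitable neighborhood $V'$ of $x_0$ and conic neighborhood $\Gamma$ of $\xi_0$, and then invoke Theorem~\ref{Theo2.4}. To estimate $\mathcal{F}_p\mu_1 = \mathcal{F}_p\mu - \mathcal{F}_p\mu_0$ I would pass through the regularized objects: one checks that $\mathcal{F}_p\mu_1^\epsilon \to \mathcal{F}_p\mu_1$ and analyzes $\mathcal{F}_p\mu_1^\epsilon = \mathcal{F}_p(\mu_\epsilon - \mu_0^\epsilon)$ directly from the integral formula \eqref{oieurgbi}. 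The key point is that $\mu_\epsilon - \mu_0^\epsilon$ is built by integrating $\mathcal{F}_p\mu(\tau,\xi)$ over the region $|\tau - x_0| > a$, i.e. $\tau$ stays away from the ball $B_a(x_0)$; combined with the universal bound \eqref{SEMPREVALE} on $\mathcal{F}_p\mu$ and the exponential gain obtained when the ``evaluation point'' is separated from the ``integration point'' — precisely the mechanism of Proposition~\ref{Invariance} / the computation in Proposition~\ref{FBIBHBV}, Step~2 — one should be able to produce, for $\tau$ near $x_0$ and $\xi$ in a narrow cone around $\xi_0$, a bound $|\mathcal{F}_p\mu_1^\epsilon(\tau,\xi)| \le c_1 e^{-c_2|\xi|}$ uniform in $\epsilon$, which passes to the limit. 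Then $\mathfrak{M}(C^\omega)$ holds for $\mu_1$ on $V'\times\Gamma$, and Theorem~\ref{Theo2.4} (in the form $(\mathfrak{R}_2)\Rightarrow (x_0,\xi_0)\notin WF_{C^\omega}$) gives microlocal real-analyticity of $u_1$ at $(x_0,\xi_0)$.

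Finally I would assemble: $u_1$ is microlocally $C^\omega$-regular at $(x_0,\xi_0)$, hence — writing $u_1 - \sum_j \bb F_j \in C^\omega$ near $x_0$ with the $\Gamma_j$ opposing $\xi_0$, and embedding $C^\omega \hookrightarrow \mathcal{S}$ — also microlocally $\mathcal{S}$-regular there. Adding any decomposition witnessing microlocal $\mathcal{S}$-regularity of $u_0$ to this one gives a decomposition for $u = u_0 + u_1$, and conversely subtracting; so $(x_0,\xi_0)\in WF_{\mathcal{S}}(u_0)$ iff $(x_0,\xi_0)\in WF_{\mathcal{S}}(u)$.

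\textbf{Main obstacle.} The delicate step is the uniform-in-$\epsilon$ exponential estimate for $\mathcal{F}_p\mu_1^\epsilon$ near $(x_0,\xi_0)$: one must commute $\mathcal{F}_p$ with the double integral in \eqref{oieurgbi}, control the $\xi$-integral against the Gaussian $e^{-\epsilon|\xi|^2}$ while extracting a factor $e^{-c|\xi|}$ from the spatial separation $|\tau-x_0|>a$ with $\tau$ ranging near $x_0$, and make sure all constants are independent of $\epsilon$ so the bound survives the limit $\epsilon \to 0^+$. This is essentially a quantitative, $\epsilon$-tracked version of the contour-deformation/phase estimates already carried out in Propositions~\ref{Invariance} and~\ref{FBIBHBV}, so the tools are in place, but the bookkeeping of the oscillatory $\tau$-integral is where the real work lies.
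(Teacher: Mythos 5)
Your framework is the paper's: decompose $\mu = \mu_0 + \mu_1$ via the inversion formula, set $u^\sharp = u_0 + u_1$ with $u^\sharp|_U = u|_U$, show $u_1$ is regular near $x_0$, and transfer via the hypothesis $C^\omega \subset \mathcal{S}$. The difference is in how you establish regularity of $u_1$, and there your plan has a gap.

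The paper proves the \emph{local} statement that $\mu_1$ converges to a holomorphic function on a complex ball $B_r^{\C}(x_0)$ (Proposition~\ref{Claim1}, proved as Proposition~\ref{ConvergenceLemma}), so $u_1 \in C^\omega(B_r(x_0))$ outright, and the lemma follows immediately because $C^\omega(B_r(x_0)) \subset \mathcal{S}(B_r(x_0))$ means $u_0$ and $u^\sharp = u_0 + u_1$ have the same $\mathcal{S}$-wave-front set over $B_r(x_0)$. You aim instead at the \emph{microlocal} statement $(x_0,\xi_0)\notin WF_{C^\omega}(u_1)$ via an FBI estimate on $\mathcal{F}_p\mu_1^\epsilon$, which is logically sufficient but technically harder than the local claim. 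The obstacle you flag — the uniform-in-$\epsilon$ exponential bound on $\mathcal{F}_p\mu_1^\epsilon$ near $(x_0,\xi_0)$ — is real, and the tools you cite are the wrong ones for it. Proposition~\ref{Invariance} requires the analytic functional to be carried by a compact set disjoint from the evaluation point, but $\mu_1$ is carried by $\overline{W'}\ni x_0$; the fact that the integral \emph{defining} $\mu_1^\epsilon$ runs over $|\tau - x_0| > a$ does not localize the \emph{carrier} of $\mu_1^\epsilon$ away from $x_0$. Proposition~\ref{FBIBHBV} Step~2 deforms the $x$-contour of the FBI integral, which is the right move when the functional is a boundary value from a wedge, but $\mu_1^\epsilon$ is built by the inversion formula and is not in that form. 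What actually makes the estimate go is the $\xi$-contour deformation performed in Proposition~\ref{ConvergenceLemma} (the map $\xi \mapsto \xi + is\Re(z-w)|\xi|$, etc.), which turns the oscillation $e^{i(z-\tau)\xi}$ with $|z-\tau| > a$ bounded below into exponential decay in $|\xi|$ independently of $\epsilon$. Once you do that work, you have already shown $\mu_1^\epsilon$ converges to a holomorphic function near $x_0$ — the paper's Proposition~\ref{Claim1} — and the detour through the FBI transform of $\mu_1$ is unnecessary.

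One more small point: you write ``$\mathcal{F}_p\mu_1^\epsilon \to \mathcal{F}_p\mu_1$'' and then pass the estimate to the limit. Pointwise this follows from $\mu_1^\epsilon \to \mu_1$ in $\mathcal{O}'(\overline{W'})$ since the FBI kernel is entire in $w$, but the uniform bound in $(\tau,\xi)$ you need for $\mathfrak{M}(C^\omega)$ is exactly the uniform-in-$\epsilon$ estimate whose proof is the content of Proposition~\ref{ConvergenceLemma}; it cannot be obtained by ``passing to the limit'' before the estimate is in hand.
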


\begin{proof}
Suppose that $\mu_0^{\epsilon} $ converges in $\mathcal{O}'(\overline{W'})$ to an analytic functional $\mu_0 \in \mathcal{O}'(\overline{W'})$, then $\mu_{1}^{\epsilon}= \mu^\epsilon- \mu_0^\epsilon$ converges to an analytic functional $\mu_1 \in \mathcal{O}'(\overline{W'})$ and we can consider the hyperfunctions $u_0, u_1 \in \mathcal{B}(W')$ defined respectively by their representatives $\mu_0$ and $\mu_1$.  Consider also $u^{\sharp}$ to be the hyperfunction in $W'$ represented by $\mu$. It is clear that $u^{\sharp}|_U= u|_U$ and that $u^{\sharp}= u_0+u_1$.
  The proof now follows from the next result.
  \end{proof}

\begin{Pro}\label{Claim1}
  Let $\epsilon_j$ be a sequence converging to $0$ such that $\mu_1^{\epsilon_j}$  converges to a holomorphic function on some ball $B_r^{\C}(x_0)$ for a small $r$, thus,  $u_1$ is a hyperfunction that is real-analytic in $B_{r}(x_0)$. 
\end{Pro}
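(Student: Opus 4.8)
The plan is to show that $\mu_1^\epsilon$ — equivalently, its defining integral — is, for $x$ in a small complex ball around $x_0$, a function that extends holomorphically, and that the exponential-type bounds on the FBI transform outside $B_a(x_0)$ make the limit holomorphic as well. Recall from \eqref{oieurgbi} and the splitting $\mu^\epsilon = \mu_0^\epsilon + \mu_1^\epsilon$ that
\begin{align*}
\mu_1^\epsilon(x) = \bigg( \int_{\RN} \int_{|\tau- x_0|> a} e^{i(x- \tau)\xi -\epsilon |\xi|^{2}} \mathcal{F}_{p}\mu(\tau, \xi) |\xi|^{\frac{N}{2k}} \dd \tau \dd \xi\bigg) \frac{\chi (x)}{(2\pi)^{N}},
\end{align*}
so the integration in $\tau$ avoids a neighborhood of $x_0$. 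First I would invoke Proposition~\ref{Invariance}: taking $K = \overline{B_{a/2}^{\C}(x_0)}\cap\RN$ (or any small compact neighborhood of $x_0$) for the $\tau$-variable role and observing that $\mu \in \mathcal{O}'(\overline U)$ can be split via the sheaf property of hyperfunctions so that the part carried near $x_0$ and the part carried in $\overline U \setminus B_a(x_0)$ are treated separately. Actually the cleaner route: apply Proposition~\ref{Invariance} directly with the compact set being $\overline U$ (or $\partial$ of a suitable set) and $K'$ a small closed ball $\overline{B_{r}(x_0)}$ disjoint from the region $\{|\tau-x_0|>a\}$ is \emph{not} what we want; rather we use that the $\tau$-integration in $\mu_1^\epsilon$ runs over $|\tau-x_0|>a$, and there we only need the universal bound \eqref{SEMPREVALE}, $|\mathcal{F}_p\mu(\tau,\xi)| \le C_L e^{L|\xi|}$ for every $L>0$.

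The key estimate is on the exponential factor $e^{i(x-\tau)\xi - \epsilon|\xi|^2}$: for $x = x_0 + z$ with $z \in \C^N$, $|z| < r$, and $|\tau - x_0| > a$, write $\Re\{i(x-\tau)\xi\} = -\Im z\cdot \xi \le |\Im z|\,|\xi| \le r|\xi|$. Combined with \eqref{SEMPREVALE} applied with $L$ chosen smaller than $a$-dependent thresholds, and the Gaussian $e^{-\epsilon|\xi|^2}$, the $\xi$-integral converges; but to get a \emph{uniform in $\epsilon$} bound and holomorphy of the limit we need the decay in $|\tau|$. Here I would use that for $|\tau-x_0|$ large, $\tau - x$ lies in the cone $\Gamma_\rho$ and $\Re p(\tau-x) \ge c'|\tau - x|^{2k}$ by \eqref{DesParComp}, so the factor $e^{-|\xi|\Re p(\tau - x - \cdots)}$ in $\mathcal{F}_p\mu$... — no: $\mathcal{F}_p\mu$ is already integrated against $\mu$, so the $p$-decay in $\tau$ is not directly visible. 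The honest approach is: split $\mu = \mu' + \mu''$ with $\mu' \in \mathcal{O}'(\overline{B_{a/2}(x_0)})$ representing $u$ near $x_0$ and $\mu'' \in \mathcal{O}'(\overline U \setminus B_{a/2}(x_0))$, using the sheaf property recalled in the review of hyperfunctions. For $\mu''$, Proposition~\ref{Invariance} with $K = \overline U\setminus B_{a/2}(x_0)$ and $K' = \overline{B_{a/4}(x_0)}$ gives $|\mathcal{F}_p\mu''(\tau,\xi)| \le Ce^{-b|\xi|}$ for $\tau$ near $x_0$; and the contribution of $\mu'$ to $\mu_1^\epsilon$ involves the $\tau$-integral over $|\tau - x_0| > a$ against the kernel $e^{i(\tau-w)\xi - |\xi|p(\tau-w)}$ with $w$ near $x_0$, so $|\tau - w| > a/2$, hence $\Re p(\tau - w) \ge c'(a/2)^{2k}$ and the kernel decays like $e^{-c'(a/2)^{2k}|\xi|}$ after we move $w$ slightly into the complex domain as in Proposition~\ref{Invariance}. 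Either way one obtains $|\mathcal{F}_p\mu(\tau,\xi)| \le C e^{-b|\xi|}$ for all $\tau$ relevant to $\mu_1^\epsilon$ and some $b>0$ — more precisely the integrand of $\mu_1^\epsilon$ enjoys exponential decay in $|\xi|$ uniformly in $\epsilon$.

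Granting that, I would finish as follows. For $z$ in a small complex ball $B_r^\C(x_0)$ with $r < b/2$, the integrand
\begin{align*}
e^{i(z-\tau)\xi - \epsilon|\xi|^2}\,\mathcal{F}_p\mu(\tau,\xi)\,|\xi|^{\frac{N}{2k}}
\end{align*}
is, for each fixed $\epsilon \ge 0$, holomorphic in $z$, continuous in $(\tau,\xi,\epsilon)$, and dominated by an $\epsilon$-independent integrable function (using $|e^{i(z-\tau)\xi}| \le e^{r|\xi|}$, the bound $e^{-b|\xi|}$, and polynomial $|\xi|^{N/2k}$, together with the compactly-supported-in-$\tau$ range once one also controls $|\tau|$ large — here the original observation that $\mathcal{F}_p\mu(\tau,\xi)$ decays rapidly in $\tau$ for $\xi$ fixed, being essentially a Gaussian, handles the tail). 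By Morera's theorem / differentiation under the integral sign, $z \mapsto \int\!\!\int e^{i(z-\tau)\xi - \epsilon|\xi|^2}\mathcal{F}_p\mu(\tau,\xi)|\xi|^{\frac N{2k}}\dd\tau\dd\xi$ is holomorphic on $B_r^\C(x_0)$, and by dominated convergence the limit as $\epsilon_j \to 0$ exists, is holomorphic there, and (since $\chi \equiv 1$ near $x_0$) coincides with $\lim_j \mu_1^{\epsilon_j}$ on $B_r(x_0)\cap\RN$. A holomorphic function on $B_r^\C(x_0)$ restricted to the reals defines a real-analytic function, whose associated analytic functional is (by the embedding $C^\omega \hookrightarrow \mathcal B$) the hyperfunction $u_1$ on $B_r(x_0)$; hence $u_1$ is real-analytic there.

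\textbf{Main obstacle.} The delicate point is obtaining the exponential decay $|\mathcal{F}_p\mu(\tau,\xi)| \le Ce^{-b|\xi|}$ \emph{uniformly in $\epsilon$} on exactly the region $\{|\tau - x_0| > a\}$ that appears in $\mu_1^\epsilon$, rather than on a region where $\tau$ is bounded away from the \emph{support} of $\mu$. This is why the decomposition $\mu = \mu' + \mu''$ via the sheaf property — separating the piece of $u$ living near $x_0$ from the piece living in $\overline U \setminus B_{a/2}(x_0)$ — is essential: only after this split does Proposition~\ref{Invariance} apply cleanly to each piece. Controlling the $|\tau|\to\infty$ tail of the $\tau$-integral (needed for absolute convergence and for dominated convergence in $\epsilon$) is a secondary technical point, handled by the Gaussian-type decay of $\mathcal{F}_p\mu$ in $\tau$ noted right after Definition~\ref{def:hpr9byt}, or by the exponential decay of the kernel $e^{-|\xi|\Re p(\tau-w)}$ for large $|\tau-w|$.
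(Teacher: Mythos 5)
The pivotal claim in your argument --- that after the split $\mu = \mu' + \mu''$ one has $|\mathcal{F}_p\mu(\tau,\xi)| \leq C e^{-b|\xi|}$ ``for all $\tau$ relevant to $\mu_1^\epsilon$'' --- does not hold, and the reason is precisely the one you half-notice in your ``main obstacle'' paragraph but then dismiss. The $\tau$-integration in $\mu_1^\epsilon$ runs over $\{|\tau - x_0| > a\}$. For $\mu'$ (carried near $x_0$) the kernel-decay argument you sketch is fine, since then $|\tau - w| > a/2$ and $\Re p(\tau - w) \gtrsim 1$. But for $\mu''$ you invoke Proposition~\ref{Invariance} with $K = \overline U \setminus B_{a/2}(x_0)$ and $K' = \overline{B_{a/4}(x_0)}$, which gives exponential decay for $\tau \in K'$, i.e.\ for $\tau$ \emph{near} $x_0$ --- the complement of the region you are integrating over. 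For $\tau$ with $|\tau - x_0| > a$ but still inside the carrier of $\mu''$ (which overlaps substantially with $\{|\tau - x_0| > a\}$), there is no exponential decay of $\mathcal{F}_p\mu''(\tau,\xi)$: the only available a priori bound is the $e^{L|\xi|}$-growth of \eqref{SEMPREVALE}. Your proposed dominating integrable function therefore does not exist on the annulus $a < |\tau - x_0| < A$ where $\tau$ meets the carrier, and dominated convergence cannot be invoked there.

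This is exactly the region that forces the paper into a different and more technical argument, which you do not supply. The paper's Proposition~\ref{ConvergenceLemma} does not split the analytic functional at all; it splits the $(\tau,\xi)$-domain into $X_2$ (compact, trivial), $X_3$ (the $|\tau - x_0| \geq A$ tail, handled as you do via the Gaussian decay of the kernel after commuting $\mu_w$ with the integrals), and $X_4 = \{a \leq |\tau - x_0| \leq A, |\xi| \geq 1\}$. For $X_4$ the paper performs a genuine contour deformation in $\xi$ via Stokes' theorem applied to the holomorphic $N$-form $\omega_\epsilon(\zeta) = e^{Q(z,w,\zeta,\epsilon)}\langle\zeta\rangle^{N/2k}\,\dd\zeta$, shifting $\xi$ to $\xi + i\sigma s\Re(z-w)|\xi|$. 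Exponential decay of $\Re Q$ on the deformed contour is then obtained from the dichotomy: since $|z - \tau| > 2\lambda$ for $z$ near $x_0$ and $|\tau - x_0| > a$, for every $w$ in the carrier either $|\Re z - \Re w| > \lambda$ (the shifted $\xi$ buys a term $-s\lambda^2|\xi|$) or $|\Re w - \tau| > \lambda$ (the factor $p(\tau - \Re w)$ buys $-c'\lambda^{2k}|\xi|$); see \eqref{aieurh6as}--\eqref{anjenfo}. This deformation and this dichotomy are the missing ingredients: without them you cannot get an $\epsilon$-uniform, $\xi$-integrable bound on the contribution from $X_4$, and the conclusion that the limit is holomorphic near $x_0$ remains unproved.
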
 
\begin{proof}
See  Subsection~\ref{section5.4}.
\end{proof}

For the remainder of this subsection we are going to devote our attention to study $\mu_0$, that is,
%
%
we will show that the hypothesis made on Lemma~\ref{lemahdiur}, more precisely, that 
\begin{equation}
\mu_0^{\epsilon} \to\mu_0\ \text{ in }\ \mathcal{O}'(\overline{W'}), \quad \text{as } \ \epsilon \to 0,
\end{equation}
is always true if $W'$ is small enough.

Let us apply $\mu_{0}^{\epsilon}$ to an entire function $h\in \mathcal{O}(\CN)$ and perform two deformations in the contour of integration. First we apply $\xi \mapsto \xi+ i|\xi|(x-\tau)$ and denote the Jacobian determinant of this transformation by $\Delta(x-\tau, \xi)$,
\begin{align}
  \mu_0^{\epsilon}(h) =& \int_{W'} \frac{1}{(2 \pi)^{N}} \int_{\RN} \int_{|\tau- x_0|\leq a} e^{i (x-\tau)\xi- \epsilon|\xi|^{2}} \mathcal{F}_p \mu (\tau, \xi) |\xi|^{\frac{N}{2k}}  \dd \tau \dd \xi h(x) \dd x\nonumber\\
                      =& \int_{W'} \frac{1}{(2 \pi)^{N}} \int_{|\tau- x_0|\leq a} \int_{\RN}  e^{Q_\epsilon(x, \tau, \xi)}\mathcal{F}_p \mu (\tau,\zeta) \langle\zeta\rangle^{\frac{N}{2k}} \Delta(x-\tau, \xi) \dd \xi \dd \tau h(x) \dd x \nonumber
\end{align}
To perform the first deformation we need that $|\Re \zeta|> |\Im \zeta|$  so we will assume  that $W'= B_{1/4}(x_0)$ and that $a\leq 1/16$.
 The second deformation is $x \mapsto z= x+ i \chi(x)\xi/|\xi|$  where $\chi\in \Cinf_c(W')$  is such that $\chi\equiv 1/32$ in $B_{1/8}(x_0)$ and $0\leq \chi(x)\leq 1/32$.  We obtain:
\begin{align}
  \mu_0^{\epsilon}(h)
  =& \frac{1}{(2 \pi)^{N}} \int_{\RN} \int_{|\tau- x_0|\leq a} \int_{W'_{\ast}}  e^{Q_{\epsilon}(z,\tau, \xi)}\mathcal{F}_p \mu (\tau, \zeta') \langle\zeta'\rangle^{\frac{N}{2k}} \Delta(z-\tau, \xi)  h(z) \dd z \dd \tau \dd \xi \label{integrand}
\end{align}
where $W'_{\ast}= \{ x+i \chi(x) \xi/|\xi|: x \in W'\}$, 
\[
Q_\epsilon(z, \tau, \xi)= i (z-\tau)\xi- |\xi|\langle z-\tau\rangle^{2}- \epsilon\langle\xi + i|\xi|(z-\tau)\rangle^{2},
\]
$\zeta=  \xi + i|\xi|(x-\tau)$ and $\zeta'=  \xi + i|\xi|(z-\tau)$. 
Note that $|\Re \zeta'|> |\Im \zeta'|$.
Let us analyze the real part of $Q_\epsilon$:
\begin{align}\label{estQe}
\Re Q_\epsilon (z, \tau, \xi)&\leq  \Re\{  i (z-\tau)\xi- |\xi|\langle z-\tau\rangle^{2}\}\notag\\
	&= - \chi(x) |\xi| - |\xi|(|x-\tau|^{2}- |\chi(x)|^{2})\notag\\	
	&\leq \left\{\begin{array}{lr}
            -\frac{31}{32^{2}}|\xi|& \quad \quad \textrm{ if } x \in B_{1/8}(x_0), \tau \in B_{a}(x_0)\\[5pt]
            -\frac{3}{32^{2}}|\xi|& \quad \quad \textrm{ if } x \notin B_{1/8}(x_0), \tau \in B_{a}(x_0).
  \end{array}\right.
\end{align}
Inequality \eqref{estQe} together \eqref{SEMPREVALE} shows that the integrand in \eqref{integrand} is bounded independently of $\epsilon$ by an integrable function and one can apply the dominated convergence theorem to obtain
\begin{align}\label{integralrepresentation}
  \mu_0(h)  = \frac{1}{(2 \pi)^{N}} \int_{\RN} \int_{|\tau- x_0|\leq a} \int_{W'_{\ast}}  e^{i (z-\tau)\xi- |\xi|\langle z-\tau\rangle^{2}}\mathcal{F}_p \mu (\tau, \zeta') \langle\zeta'\rangle^{\frac{N}{2k}} \Delta(z-\tau, \xi)  h(z) \dd z \dd \tau \dd \xi.
\end{align}

Let us now prove that $\mu_0\in \mathcal{O}'(\overline{W'})$ by proving that $\mu_0$ satisfies an estimate of the kind given in \eqref{deffuncanal}. 
To see this, fix a positive number $\delta$. If $\delta>1/32$ then it is clear from the integral representation of $\mu_0$, equation \eqref{integralrepresentation}, that there is $C>0$ such that
\begin{align*}
  |\mu_0(h)|\leq C \sup_{z\in  W'_{\delta}}|h(z)|.
\end{align*}

Let us treat the case where $\delta\in (0,1/32]$. We note that our construction of the integral representation of $\mu_0$ integrates the $z$-variable in a  deformation of the real open set $W'$ inside the complex open set $W'_{1/32}$ and in order to integrate the $z$-variable in a deformation of $W'$ inside of $W'_{\delta}$  we consider a decomposition  $B_{a}(x_0)= \bigcup_{j\in I} A_j$ by a finite union of measurable disjoint sets of radius at most $\delta a$, i.e., for each $j\in I$ there is $x_j$ such that $A_j \subset B_{\delta a}(x_j)$. Then
\begin{align*}
  \mu_0(h)  &= \frac{1}{(2 \pi)^{N}} \int_{\RN} \int_{|\tau- x_0|\leq a} \int_{W'_{\ast}}  e^{i (z-\tau)\xi- |\xi|\langle z-\tau\rangle^{2}}\mathcal{F}_p \mu (\tau, \zeta') \langle\zeta'\rangle^{\frac{N}{2k}} \Delta(z-\tau, \xi)  h(z) \dd z \dd \tau \dd \xi\\
  &=   \sum_{j \in I}\lim_{\epsilon \lra 0} \int_{W'} \frac{1}{(2 \pi)^{N}} \int_{A_j} \int_{\RN}  e^{Q_\epsilon(x, \tau, \xi)}\mathcal{F}_p \mu (\tau,\zeta) \langle\zeta\rangle^{\frac{N}{2k}} \Delta(x-\tau, \xi) \dd \xi \dd \tau h(x) \dd x.
\end{align*}
Now let $\chi_j\in \Cinf_c(W')$ such that $\chi_j= \delta/32$ in $B_{\delta/8}(x_j)$, $\chi_j=0$ in $W'\setminus B_{\delta/4}(x_j)$ and $0\leq \chi_j\leq \delta/32$. Given $\xi \in \RN$ consider the deformation $x\mapsto z_j= x+i \chi_j(x)\xi/|\xi|$ and denote $W'_{j,\ast}= \{ x+i\chi_j(x)\xi/|\xi|: x \in W'\}.$ Then
\begin{align*}
  \Re\{  i (z_j-\tau)\xi- |\xi|\langle z_j-\tau\rangle^{2}\}&= - \chi_j(x) |\xi| - |\xi|(|x-\tau|^{2}- |\chi_j(x)|^{2})\\
  &\leq \left\{\begin{array}{lr}
            -\frac{32\delta-\delta^{2}}{32^{2}}|\xi|& \quad \quad \textrm{ if } x \in B_{\delta/8}(x_0), \tau \in B_{\delta a}(x_0)\\[5pt]
            -\frac{3 }{32^2}\delta^{2}|\xi|& \quad \quad \textrm{ if } x \notin B_{\delta/8}(x_0), \tau \in B_{\delta a}(x_0)
  \end{array}\right.
\end{align*}
If we denote by $\zeta'_j= \xi+ i|\xi|(z_j-\tau)$ for every $j\in I$, then we can write
\begin{align}\label{integralrepresentation2}
  \mu_0(h) &=   \sum_{j \in I} \frac{1}{(2 \pi)^{N}} \int_{A_j} \int_{\RN}  \int_{W'_{j,\ast}} e^{i (z-\tau)\xi- |\xi|\langle z-\tau\rangle^{2}}\mathcal{F}_p \mu (\tau,\zeta_j') \langle\zeta_j'\rangle^{\frac{N}{2k}} \Delta(x-\tau, \xi) h(z)\dd z\dd \xi \dd \tau.
\end{align}
Therefore, we have
\begin{align*}
  |\mu_0(h)|\leq C_\delta \sup_{z \in W'_{\delta}} |h(z)|.
\end{align*}
Proving that $\mu_0$ is carried by $\overline{W'}$, as we wished to prove.

\subsection{Microlocal decomposition of an analytic functional}

Now we know that $\mu_0^{\epsilon}$ converges in $\mathcal{O}'(\overline{W'})$ to $\mu_0$, thus, $\mu_1^{\epsilon}= \mu_\epsilon- \mu_0^{\epsilon}$ also converges to $\mu_1$ in $\mathcal{O}'(\overline{W'})$. Proposition~\ref{Claim1} states that $\mu_1$ is real-analytic in a neighborhood of $x_0$, therefore, $\mu$ and $\mu_0$ have the same microlocal regularity at $x_0$ (with respect to any sheaf that contains $C^{\omega}$). Our next procedure will show how to manufacture a decomposition of $\mu_0$ as sum of analytic functionals in a sharp way in which only one of them encodes the  microlocal regularity at $(x_0, \xi_0)$, i.e., the other analytic functionals define hyperfunctions that are microlocal real-analytic at $(x_0, \xi_0)$.

Now we choose $\mathcal{C}_j$, $1\leq j\leq L$ open, acute cones such that
  \begin{align}\label{conesC1L}
    \RN= \bigcup_{j=1}^{L} \overline{\mathcal{C}_j}, \quad \text{and}\quad {\rm int}(\mathcal{C}_j\cap \mathcal{C}_k)=\emptyset, \ \forall\, j,k\in\{1,\cdots,L\}, j\ne k.
  \end{align}
We can further assume that $\xi_0 \in \mathcal{C}_1\subset \Gamma$ and $\xi_0 \notin \overline{\mathcal{C}_j}$ when $j\geq 2$. 
Let $\Gamma_j$, $j \in \{1,\ldots, L\}$  be open cones with the property that, for some $c$, $0< c\leq 1$, it holds
  \begin{align}\label{liaus}
      v \cdot \xi\geq c |v||\xi|,\quad  \forall v \in \Gamma_j, \forall \xi \in \mathcal{C}_j,
  \end{align}
and $\xi_0 \cdot \Gamma_j <0$ for $j\in \{2, \ldots, L\}.$ 
The later implies that $\Gamma_j$ is opposite to $\xi_0$ for $j\in \{2, \ldots, L\}.$
  
For each $j\in\{1,\dots,L\},$ 
\begin{align}\label{partedasefes}
  f_j(z) = \frac{1}{(2\pi)^{N}}\int_{|\tau- x_0|\leq a} \int_{\mathcal{C}_j}  e^{i\xi(z-\tau)-|\xi|\langle z-\tau\rangle^{2}} \mathcal{F}_{p} \mu(\tau, \zeta') \langle\zeta'\rangle^{\frac{N}{2k}} \Delta(z-\tau, \xi)  \dd \xi \dd \tau
\end{align}
defines a holomorphic function in $\RN+ i\Gamma_j^{c}$, where $\Gamma_j^{c}= \{ y\in \Gamma_j: |y|< c\},$ recall that $\zeta'= \xi+ i|\xi|(z-\tau)$.

We want to prove that $\mu_0- \sum_{j=1}^{L} \mu_{f_j,W'} \in \mathcal{O}'(\del W')$. To this end, let $\mathcal{U}$ be a complex neighborhood of $\del W'$. By Remark~\ref{faluh}, item (i), we can find $\epsilon>0$ such that if $y_j \in \Gamma_j^{c}$ is such that $|y_j|\leq \epsilon$, then $\mu_{f_j, W'}- \mu_{f_j, W'}^{y_j} \in \mathcal{O}'(\mathcal{U})$. Therefore, it is enough to study $\mu_0 - \sum_{j=1}^{L}\mu_{f_j, W'}^{y_j}$. To do so, let $h \in \mathcal{O}(\CN)$ we denote
\begin{align*}
  \Theta^{(\xi, \tau)}(z)= e^{i(z-\tau)\xi -|\xi|\langle z-\tau\rangle^2} \mathcal{F}_{p} \mu(\tau, \zeta') \langle\zeta'\rangle^{\frac{N}{2k}} \Delta(z-\tau, \xi)   h(z)\frac{ \dd z}{(2\pi)^{N}},
\end{align*}
where $\Theta$ is a holomorphic $N$-form.

To obtain the decomposition of $\mu_0$ we will use the integral representations of $\mu_0$ given in \eqref{integralrepresentation} and \eqref{integralrepresentation2}. Let us assume, for simplicity, that $y_j+ B_{1/32}(0)\subset \Gamma_j^c$ for every $j=1, \ldots, L$ and argue using the integral representation of $\mu_0$ given in \eqref{integralrepresentation}. Otherwise, we find $\delta\in (0,1)$ such that $y_j+ B_{\delta/32}(0)\subset \Gamma_j^c$ for every $j=1, \ldots, L$ and repeat our next argument using \eqref{integralrepresentation2} instead of \eqref{integralrepresentation}.

Let $W'_{\ast}= \{ x+i \chi(x) \xi/|\xi|: x \in W'\}$ be as in \eqref{integralrepresentation}, assuming that $y_j+ B_{1/32}(0)\subset \Gamma_j^c$ for every $j=1, \ldots, L$ we have another representation for the action of  $\mu_{f_j, W'}^{y_j}$ in $h \in \mathcal{O}(\CN)$:
\begin{align*}
  \mu_{f_j, W'}^{y_j}(h)=&   \int_{W'} f_j(x+iy_j) h(x+iy_j) \dd x\\
  =&  \int_{\mathcal{C}_j} \int_{|\tau- x_0|\leq a} \int_{W'_{\ast}} \Theta^{(\xi, \tau)}(z+iy_j) \dd \tau \dd \xi. 
\end{align*}

 For every $j \in \{1, \ldots, L\}$, consider
\begin{align*}
\mathcal{W}_j= \{\tilde{z} \in \CN: \exists t \in [0,1]: \tilde{z} - t y_j \in W'_{\ast}\}.  
\end{align*}
Applying the Stokes' theorem for $\dd_z \Theta^{(\xi, \tau)}$ in $\mathcal{W}_j$ we obtain that
\begin{align*}
   \int_{W'_{\ast}}\big(\Theta^{(\xi, \tau)}(z)-\Theta^{(\xi, \tau)}(z+iy_j) \big)=  \int_{0}^{1}\int_{\del W'} \Theta^{(\xi, \tau)}(z_{j,t}) 
\end{align*}
where $z_{j,t}= z+it y_j$.
Thus we can write $\mu_0 - \sum_{j=1}^{L} \mu_{f_j,W'}^{y_j}= \sum_{j=1}^{L} \lambda_j$ where
\begin{align*}
  \lambda_j(h)=\int_{\mathcal{C}_j} \int_{|\tau- x_0|\leq a} \int_{0}^{1}\int_{\del W'} \Theta^{(\xi, \tau)}(z_{j,t}) \dd \tau \dd \xi.
\end{align*}
Therefore, $\lambda_j \in \mathcal{O}'(\del W'+ i[0,1]y_j)$. Hence $\mu_0 - \sum_{j=1}^{L} \mu_{f_j, W'}^{y_j} \in \mathcal{O}'(\mathcal{U}),$ and this implies that $\mu_0 - \sum_{j=1}^{L} \mu_{f_j, W'}\in \mathcal{O}'(\del W').$ 

Now define
\begin{align}\label{Fum}
  F_1(z)= \frac{1}{(2\pi)^{N}} \int_{|\tau-x_0|\leq  a} \int_{\mathcal{C}_1} e^{i\xi(z-\tau)} \mathcal{F}_p \mu (\tau, \xi) |\xi|^{\frac{N}{2k}} \dd \xi \dd \tau
\end{align}
and
\begin{align*}
  R_{1}(z)= \frac{1}{(2\pi)^{N}} \int_{|\tau-x_0|\leq  a}\int_{0}^{1} \int_{\del \mathcal{C}_1} e^{i\zeta_t(z- \tau)} \mathcal{F}_p \mu (\tau, \zeta_t) \langle\zeta_t\rangle^{\frac{N}{2k}} \dd \zeta_t\dd \tau
\end{align*}
where $\zeta_t= \xi+ i t|\xi|(z- \tau)$. Both $F_1$ and $R_1$ are holomorphic functions in $\RN+i\Gamma_1^{c}$. Note that
\begin{align*}
  \mu_{f_1, W'}- \mu_{F_1, W'}- \mu_{R_1, W'}=& (\mu_{f_1, W'}- \mu_{f_1, W'}^{y_1})- (\mu_{F_1, W'}-\mu_{F_1, W'}^{y_1})- (\mu_{R_1, W'}-\mu_{R_1, W'}^{y_1})\\
  &+   \mu_{f_1, W'}^{y_1}- \mu_{F_1, W'}^{y_1}- \mu_{R_1, W'}^{y_1}.
\end{align*}
Given $\mathcal{U}$ a complex open neighborhood of $\del W'$ there is $\epsilon'>0$ such that if we choose $y_1\in \Gamma^{c}_{1}$ with $|y_1|< \epsilon'$, then analytic functionals  $\mu_{f_1, W'}- \mu_{f_1, W'}^{y_1}$, $\mu_{F_1, W'}-\mu_{F_1, W'}^{y_1}$ and $\mu_{R_1, W'}-\mu_{R_1, W'}^{y_1}$ are carried by $\mathcal{U}$ and $\mu_{f_1, W'}^{y_1}- \mu_{F_1, W'}^{y_1}- \mu_{R_1, W'}^{y_1}=0$ thanks to Stokes' theorem.

We can write $\del \mathcal{C}_1= \bigcup_{j=2}^{L} D_j$ where each $D_j$ is a measurable subset of $\del \mathcal{C}_1$ such that $D_j\cap D_k = \emptyset$ if $j\neq k$ and $D_j \subset \mathcal{C}_j$. Chosen $\tilde{v_1} \in \Gamma_1$ and $\tilde{v_j} \in \Gamma_j$ we have that
\begin{align*}
  (t\tilde{v_1}+ (1-t)\tilde{v_j})\xi= t\tilde{v_1}\xi+ (1-t)\tilde{v_j}\xi\geq c \big(t|\tilde{v_1}|+ (1-t)|\tilde{v_j}|)\xi, \quad \forall\xi\in D_j.
\end{align*}
Let $\tilde{\Gamma}_j$ be the convex hull of $\Gamma_1\cup\Gamma_j$, so $v\cdot \xi\geq c|v||\xi|$ for every $v\in \tilde{\Gamma}_j$ and $\xi\in D_j$, this means that if we define
\begin{align*}
  R_{j}(z)= \frac{1}{(2\pi)^{N}} \int_{|\tau-x_0|\leq  a}\int_{0}^{1} \int_{D_j} e^{i\zeta_t(z- \tau)} \mathcal{F}_p \mu (\tau, \zeta_t) \langle\zeta_t\rangle^{\frac{N}{2k}} \dd \zeta_t\dd \tau,
\end{align*}
for every $j \in \{2, \ldots, L\}$, then $R_j$ is a holomorphic function in $\tilde{\Gamma}_j$ and $R_1= \sum_{j=2}^{L} R_j$. Therefore, $\mu_{R_1,W'}= \sum_{j=2}^{L} \mu_{R_j,W'}$, i.e., $\mu_{R_1, W'}$ is the boundary value of holomorphic functions defined in cones opposing the direction $\xi_0$.

We conclude that $\mu_0= \mu_{F_1, W'}+ \sum_{j=2}^{L}\big(\mu_{f_j, W'}+ \mu_{R_j,W'}\big)$ where $\mu_{F_1, W'}$ is the only term that is not the boundary value of a holomorphic function defined in a cone opposing the direction $\xi_0$.

\begin{Def}\label{MLdecom}
  We shall say that decomposition described above
  \begin{align}
\mu_0=  \mu_{F_1, W'}+ \sum_{j=2}^{L}\big(\mu_{f_j, W'}+ \mu_{R_j,W'}\big)    
  \end{align}
 is a \it{microlocal decomposition for $\mu_0$}. 
\end{Def}
Such decomposition is important in our microlocal analysis because allow us to study separately the ``part'' of $\mu_0$ that concentrate the information in the direction $\xi_0$   from the ``part''  defined by holomorphic functions in cones  opposing the direction $\xi_0$. 

Thanks to this decomposition we have the following refinement of 
Lemma~\ref{lemahdiur}.
\begin{Pro}\label{FundamentalRemark}
Let $\mu\in \mathcal{O}'(\overline{U})$ a representative of $u$. Define $F_{1}$ as in \eqref{Fum}. If there is a neighborhood $W''$ of $x_0$ such that the restriction of $\bb F_1$ to $W''$ belongs to $\mathcal{S}(W'')$, then $(x_0,\xi_0)\notin WF_{\mathcal{S}}(u)$.
\end{Pro}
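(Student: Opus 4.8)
\textbf{Proof proposal for Proposition~\ref{FundamentalRemark}.}

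The plan is to assemble the microlocal decomposition of $\mu_0$ constructed in the previous subsection together with Lemma~\ref{lemahdiur} and Corollary~\ref{WFSSBVH}. First I would recall that, by the work done so far, $\mu_0^\epsilon$ converges in $\mathcal{O}'(\overline{W'})$ to $\mu_0$, so Lemma~\ref{lemahdiur} applies: $(x_0,\xi_0)\in WF_{\mathcal S}(u)$ if and only if $(x_0,\xi_0)\in WF_{\mathcal S}(u_0)$, where $u_0$ is the hyperfunction in $W'$ represented by $\mu_0$. Thus it suffices to show $(x_0,\xi_0)\notin WF_{\mathcal S}(u_0)$. Here one must be a bit careful: $F_1$ built from $\mu$ in \eqref{Fum} and the $F_1$ appearing inside the microlocal decomposition differ only in that the latter is, up to the real-analytic error controlled by Proposition~\ref{Claim1} and the already-established convergence, the relevant holomorphic piece; I would make explicit that the hypothesis ``$\bb F_1|_{W''}\in\mathcal S(W'')$'' transfers to the $\mu_{F_1,W'}$ term of the decomposition, shrinking $W''$ if necessary so that $W''\subset W'$ and $x_0\in W''$.

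Next I would invoke the microlocal decomposition $\mu_0=\mu_{F_1,W'}+\sum_{j=2}^{L}\bigl(\mu_{f_j,W'}+\mu_{R_j,W'}\bigr)$ from Definition~\ref{MLdecom}. By construction, each $f_j$ ($j\ge 2$) is holomorphic in $\RN+i\Gamma_j^c$ with $\Gamma_j$ opposite to $\xi_0$, and each $R_j$ ($j\ge 2$) is holomorphic in a wedge with edge opposing $\xi_0$ as well (it was shown $R_j$ is holomorphic in $\tilde\Gamma_j$, the convex hull of $\Gamma_1\cup\Gamma_j$, but restricted to the relevant cone $D_j\subset\mathcal C_j$ the phase gives decay in directions opposing $\xi_0$; more simply, $R_1=\sum_{j\ge2}R_j$ was already identified as a sum of boundary values of holomorphic functions in cones opposing $\xi_0$). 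Hence $v:=u_0-\bb F_1$ is, on $W'$, a finite sum of boundary values $\sum \bb F_j'$ with each $F_j'$ holomorphic in a wedge $\mathcal W_\delta(W';\Gamma_j')$ for cones $\Gamma_j'$ opposite to $\xi_0$. Therefore, by the definition of microlocal $C^\omega$-regularity (Definition~\ref{alsiu980}) — or directly by Corollary~\ref{WFSSBVH} applied to $\nu:=\mu_0-\mu_{F_1,W'}$ — we get $(x_0,\xi_0)\notin WF_{C^\omega}(v)$, and since $C^\omega$ is a subsheaf of $\mathcal S$, also $(x_0,\xi_0)\notin WF_{\mathcal S}(v)$.

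Finally, the hypothesis says $u_0 - v = \bb F_1$ is in $\mathcal S(W'')$ for a neighborhood $W''$ of $x_0$; this is precisely item (2) of Definition~\ref{alsiu980} with the empty family of cones, so $(x_0,\xi_0)\notin WF_{\mathcal S}(\bb F_1)$. Combining with the previous paragraph and the fact that the $\mathcal S$-wave-front set is subadditive (if $w=w_1+w_2$ and neither $w_1$ nor $w_2$ has $(x_0,\xi_0)$ in its $\mathcal S$-wave-front set, then neither does $w$ — immediate from Definition~\ref{alsiu980} by concatenating the two finite families of cones and adding the two $\mathcal S$-sections), we conclude $(x_0,\xi_0)\notin WF_{\mathcal S}(u_0)$, hence $(x_0,\xi_0)\notin WF_{\mathcal S}(u)$ by Lemma~\ref{lemahdiur}. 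The main obstacle I anticipate is the bookkeeping in the first paragraph: matching the $F_1$ of \eqref{Fum} (defined from a representative $\mu$ on $U$) with the $\mu_{F_1,W'}$ appearing in the decomposition of $\mu_0$, and ensuring all the neighborhoods $W''\subset W'\subset U$ and the cone $\Gamma_1\subset\Gamma$ are chosen compatibly so that the hypothesis on $\bb F_1$ genuinely feeds into the decomposition; everything else is a direct application of the lemmas already proved.
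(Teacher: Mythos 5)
Your proposal is correct and follows the paper's intended argument; the paper itself gives no separate proof of Proposition~\ref{FundamentalRemark}, treating it as immediate from the microlocal decomposition of $\mu_0$ together with Lemma~\ref{lemahdiur}, which is exactly the route you take. One small simplification: rather than appeal to subadditivity of the $\mathcal S$-wave-front set, you could invoke Definition~\ref{alsiu980} directly, writing $u_0 - \sum_{j=2}^{L}(\bb f_j + \bb R_j) = \bb F_1 \in \mathcal S(W'')$ with the cones of the $f_j$ and (after restricting each $R_j$ to the sub-wedge over $\Gamma_j\subset\tilde\Gamma_j$) all opposite to $\xi_0$; this is the decomposition Definition~\ref{alsiu980} asks for and avoids the intermediate hyperfunction $v$. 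Your worry about matching the $F_1$ of \eqref{Fum} with the one in the decomposition is unfounded: they are literally the same function, and $\mu_{F_1,W'}$ is by construction the boundary-value representative of $\bb F_1$ on $W'$.
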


We are going to prove  Theorem~\ref{Theo2.4} using the above proposition with $\mathcal{S}= C^{\omega}$.

\begin{Pro}\label{laiw} Let $u \in \mathcal{B}(\RN)$. Assume that there are open neighborhoods $U, V$ of $x_0$ in $\RN$ with $V\subset\subset U$,   $\mu \in \mathcal{O}'(\overline{U})$ representative of $u$ in $U$, a conic neighborhood $\Gamma$ of $\xi_0$ in $\RN$ and constants $c_1, c_2>0$ such that
  \begin{align}\label{aleuyf}
    |\mathcal{F}_{p}\mu(\tau, \xi)|\leq c_1 e^{-c_2|\xi|}, \quad \forall (\tau, \xi) \in V\times \Gamma.
  \end{align}
Then $F_1$, defined in \eqref{Fum}, is a holomorphic function in a full neighborhood of $x_0$. And consequently $(x_0, \xi_0) \notin WF_{C^\omega}(u).$
\end{Pro}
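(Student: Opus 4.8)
The plan is to show that the hypothesis \eqref{aleuyf} forces the defining integral \eqref{Fum} for $F_1$ to extend holomorphically across the real boundary $\RN\times\{0\}$ of the wedge $\RN+i\Gamma_1^c$, so that $\bb F_1$ is real-analytic near $x_0$; then Proposition~\ref{FundamentalRemark} with $\mathcal S=C^\omega$ gives $(x_0,\xi_0)\notin WF_{C^\omega}(u)$. Recall
\[
F_1(z)=\frac{1}{(2\pi)^N}\int_{|\tau-x_0|\le a}\int_{\mathcal C_1}e^{i\xi(z-\tau)}\,\mathcal F_p\mu(\tau,\xi)\,|\xi|^{\frac{N}{2k}}\,\dd\xi\,\dd\tau ,
\]
and that $\mathcal C_1\subset\Gamma$ is an acute cone containing $\xi_0$, while $B_a(x_0)\subset V$. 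The point is that for $z=x+iy$ with $x$ near $x_0$ the factor $e^{i\xi(z-\tau)}=e^{i\xi(x-\tau)}e^{-y\cdot\xi}$ only provides decay when $y\cdot\xi>0$, i.e.\ when $y\in\Gamma_1^c$; to push $y$ to $0$ (and slightly beyond) I need genuine decay coming from $\mathcal F_p\mu$ itself, and this is exactly what \eqref{aleuyf} supplies on $\mathcal C_1\subset\Gamma$.

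The key steps, in order: (1) Fix a small ball $B_r^{\C}(x_0)$; for $z$ in it and $\tau\in B_a(x_0)$ we have $|\xi(z-\tau)|\le (2a+r)|\xi|$, say. Split off the decay: writing $|\mathcal F_p\mu(\tau,\xi)|\le c_1e^{-c_2|\xi|}$ on $V\times\mathcal C_1$, the integrand is bounded in modulus by $c_1|\xi|^{N/2k}e^{(2a+r)|\xi|-c_2|\xi|}$, which is integrable over $\mathcal C_1$ provided $2a+r<c_2$. Since $a$ and $r$ can be chosen small (we may shrink $V$ and hence $a$ from the outset, and $r$ is ours to choose), this holds, so the integral converges absolutely and uniformly for $z\in B_r^{\C}(x_0)$. (2) Differentiating under the integral sign is then justified by the same domination (each $\partial_{z_j}$ brings down a factor $i\xi_j$, harmless against the exponential gap), and the integrand is holomorphic in $z$ for each $(\tau,\xi)$; hence $F_1$ is holomorphic on $B_r^{\C}(x_0)$, i.e.\ on a full complex neighborhood of $x_0$. (3) A holomorphic function on a full neighborhood of $x_0$ has, as its boundary value in the hyperfunction sense, simply its restriction to the real axis, which is a real-analytic function near $x_0$; thus $\bb F_1$ restricted to $W''=B_r(x_0)$ lies in $C^\omega(W'')$. (4) Apply Proposition~\ref{FundamentalRemark} with $\mathcal S=C^\omega$ to conclude $(x_0,\xi_0)\notin WF_{C^\omega}(u)$.

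The main obstacle I anticipate is purely bookkeeping about constants and domains: one must make sure the radius $a$ of the $\tau$-ball (fixed earlier so that $B_a(x_0)\subset V$ and used throughout the inversion-formula analysis) is, or can be taken, small relative to the decay rate $c_2$ from \eqref{aleuyf} — and then that $r$ can be chosen with $2a+r<c_2$. This is not automatic because $a$ was chosen before $c_2$ appeared, so the cleanest route is to observe that by the remark at the end of Section on \textbf{Invariance of the FBI estimates}, condition \eqref{aleuyf} is stable under restriction to smaller neighborhoods of $x_0$; hence we are free to shrink $V$ (and with it $a$) after the fact without destroying \eqref{aleuyf}, and then the integrability and differentiation-under-the-integral arguments go through verbatim. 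Beyond this the remaining steps are routine: absolute convergence of a Gaussian-type integral against an exponentially small weight, Morera/differentiation to get holomorphy, and the elementary identification of $\bb F_1$ with a genuine real-analytic function when $F_1$ is holomorphic across the reals.
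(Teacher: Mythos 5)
Your argument is correct and follows the same route as the paper: show that the exponential decay of $\mathcal F_p\mu$ makes the defining integral for $F_1$ converge uniformly for $z$ in a full complex neighborhood of $x_0$, so $F_1$ is holomorphic there, and then conclude with Proposition~\ref{FundamentalRemark} applied with $\mathcal S=C^\omega$.

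However, the ``bookkeeping obstacle'' you anticipate (needing $a$ small relative to $c_2$, and the ensuing detour through shrinking $V$ and invoking the invariance of FBI estimates) is a phantom created by an unnecessarily crude modulus estimate. You bounded $|e^{i\xi(z-\tau)}|\le e^{|\xi||z-\tau|}$, but since $\tau$ and $\xi$ are real, one has exactly
\begin{align*}
\big|e^{i\xi\cdot(z-\tau)}\big| \;=\; e^{-\xi\cdot\Im z} \;\le\; e^{|\xi|\,|\Im z|},
\end{align*}
which depends on $\Im z$ alone, not on $\tau$ or $\Re z$. Hence the integrand in \eqref{Fum} is dominated by $c_1\,|\xi|^{N/2k}\,e^{-(c_2-|\Im z|)|\xi|}$, which is integrable uniformly as soon as $|\Im z|<c_2/2$, say, with no constraint at all on $a$. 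This gives holomorphy of $F_1$ on $\RN+i\{|y|<c_2/2\}$ directly (this is the form the paper states), and the shrinking-of-$V$ step in your proof can simply be deleted. Everything else---differentiation under the integral, identification of $\bb F_1$ with the real-analytic restriction of a holomorphic function, and the appeal to Proposition~\ref{FundamentalRemark}---is fine.
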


\begin{proof}
The exponential decay of $\mathcal{F}_{p}\mu(\tau, \xi)$ on $ \{ |\tau- x_0|\leq a\}\times \mathcal{C}_1$, \eqref{aleuyf},  guarantees that $F_1$ is a holomorphic function in $\RN+ i \{ y \in \Rn: |y|< c_2/2\}$.
\end{proof}


\subsection{Proof of Proposition~\ref{Claim1}}\label{section5.4}

We asserted in Proposition~\ref{Claim1} that the restriction of the hyperfunction defined by $\mu_1$ to a small neighborhood of $x_0$ is real-analytic. To prove this fact, consider the entire function
\begin{align}\label{ALSOENTIRE}
  z &\mapsto \frac{1}{(2\pi)^{N}}\int_{\RN} \int_{|\tau-x_0|\geq a} e^{i(z- \tau)\xi- \epsilon|\xi|^{2}} \mathcal{F}_{p}\mu(\tau, \xi)|\xi|^{\lambda N} \dd \tau \dd \xi.
\end{align}

 Therefore, we can regard $\mu_1^{\epsilon}$ as a sequence of holomorphic functions defined in $W+i\RN$. 
\begin{Pro}\label{ConvergenceLemma}
  There exist $r>0,$ a sequence $\{\epsilon_k\}_{k\in\N}$ converging to zero and a holomorphic function $H \in \mathcal{O}(B_r^\C(x_0))$ such that $\mu_1^{\epsilon_k}$ converges to $H$ in $\mathcal{O}( B_r^{\C}(x_0))$.
\end{Pro}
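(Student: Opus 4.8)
The plan is to observe, as in \eqref{ALSOENTIRE}, that on $W$ the function $\mu_1^\epsilon$ coincides with the restriction of the entire function
\[
G^\epsilon(z)=\frac{1}{(2\pi)^{N}}\int_{\RN}\int_{|\tau-x_0|\ge a}e^{i(z-\tau)\xi-\epsilon|\xi|^{2}}\mathcal{F}_{p}\mu(\tau,\xi)\,|\xi|^{\frac{N}{2k}}\,\dd\tau\,\dd\xi ,
\]
and then to prove a bound $\sup_{\epsilon\in(0,1]}\sup_{z\in B_r^{\C}(x_0)}|G^\epsilon(z)|<\infty$ for some small $r>0$. Granting this, $\{G^\epsilon\}_{\epsilon\in(0,1]}$ is a normal family on $B_r^{\C}(x_0)$, so any sequence $\epsilon_j\to0$ has a subsequence $\{\epsilon_k\}$ along which $G^{\epsilon_k}$ — hence $\mu_1^{\epsilon_k}$ — converges in $\mathcal{O}(B_r^{\C}(x_0))$ to some $H$, which is exactly the assertion. (The identification of $H$ with a representative of $\mu_1$ near $x_0$, which is what Proposition~\ref{Claim1} actually needs, then follows from $\mu_1^\epsilon\to\mu_1$ in $\mathcal{O}'(\overline{W'})$.)

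To obtain the uniform bound I would write $\mu$ as a Radon measure $\nu$ carried in $\overline{U}_{\delta_1}$ with $\delta_1$ small and use $\mathcal{F}_p\mu(\tau,\xi)=c_p\int e^{i(\tau-w)\xi-|\xi|p(\tau-w)}\,\dd\nu(w)$ together with $e^{i(z-\tau)\xi}e^{i(\tau-w)\xi}=e^{i(z-w)\xi}$, which turns $G^\epsilon(z)$ into
\[
\frac{c_p}{(2\pi)^{N}}\int_{\overline{U}_{\delta_1}}\int_{|\tau-x_0|\ge a}\int_{\RN}e^{i(z-w)\xi-|\xi|p(\tau-w)-\epsilon|\xi|^{2}}\,|\xi|^{\frac{N}{2k}}\,\dd\xi\,\dd\tau\,\dd\nu(w).
\]
Since $|e^{-\epsilon|\xi|^{2}}|\le1$ for every $\epsilon$, the Gaussian factor is harmless, and the only growth to kill is $|e^{i(z-w)\xi}|\le e^{(r+\delta_1)|\xi|}$, caused by letting $z$ be complex. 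I would split the $\tau$-integral according to the position of $\tau$ relative to $\Re w$. When $|\tau-\Re w|\ge a/2$, choosing $\delta_1<\rho a/2$ forces $\tau-w\in\Gamma_\rho$, so \eqref{DesParComp} gives $\Re p(\tau-w)\ge c'|\tau-w|^{2k}$; hence, for $r,\delta_1$ small relative to $c'(a/2)^{2k}$, the inner $\xi$-integral is $\lesssim(c'|\tau-w|^{2k}-r-\delta_1)^{-(N+N/2k)}$, which is bounded on bounded $\tau$-sets and is $O(|\tau|^{-N-2kN})$ as $|\tau|\to\infty$, hence integrable in $(\tau,w)$. When $|\tau-\Re w|<a/2$, the constraint $|\tau-x_0|\ge a$ with $|\Re z-x_0|<r<a/4$ yields $|\Re z-\Re w|\ge a/4$; for each such $w$ and each fixed $\epsilon$ I would deform the $\xi$-contour to $\zeta=\xi+i|\xi|\lambda_0 e_w$ with $e_w=(\Re z-\Re w)/|\Re z-\Re w|$. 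This deformation is legitimate because $\Re(\zeta\cdot\zeta)=(1-\lambda_0^{2})|\xi|^{2}>0$ all along the homotopy, the integrand extends holomorphically there, and at fixed $\epsilon$ it still decays like $e^{-\epsilon(1-\lambda_0^{2})|\xi|^{2}}$ at infinity, so Cauchy's theorem applies. After deforming, $\Re\{i(z-w)\zeta\}\le(r+\delta_1-\lambda_0 a/4)|\xi|$, while $|\Re\{\langle\zeta\rangle p(\tau-w)\}|\le\sqrt2\,|\xi|\sup_{|\tau-w|<a/2+\delta_1}|p|=O(a^{2k})|\xi|$; since $2k-1\ge1$, choosing $a$ small first, then $\delta_1$ and $r$ small, and $\lambda_0\in(\tfrac12,1)$, makes the total exponent $\le-\delta'|\xi|$ for some $\delta'>0$, and the $\tau$-region here is bounded, so this contribution is finite too. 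Summing the two regimes gives the desired bound.

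The main obstacle is precisely this second regime. The factor $e^{-|\xi|p(\tau-w)}$, which supplies all the decay when $\tau$ is far out, degenerates when $\tau$ sits close to the support of $\mu$ (there $p(\tau-w)$ is small), so the decay has to be created instead by a contour deformation exploiting the geometric separation $|z-\tau|\ge a/2$ between the evaluation point $z\approx x_0$ and the integration variable $\tau$. The delicate part is quantitative: one must order the parameters $a,\delta_1,r,\lambda_0$ so that the exponential gain produced by the deformation strictly dominates the (now unhelpful, but only polynomially large) loss coming from $p(\tau-w)$, which works because $\sup|p|$ over the relevant bounded region is $O(a^{2k})$ while the gain is of size $\sim\lambda_0 a$ and $2k-1\ge1$.
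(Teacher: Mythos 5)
Your strategy is, at its core, the paper's: show $\mu_1^\epsilon$ is uniformly bounded on a small complex ball around $x_0$ by splitting the $(\tau,w)$-region so that when $\tau$ is far from $\Re w$ the factor $e^{-|\xi|\Re p(\tau-w)}$ supplies decay via \eqref{DesParComp}, and when $\tau$ is close to $\Re w$ (forcing $|\Re z-\Re w|\gtrsim a$) you manufacture decay by deforming the $\xi$-contour in the direction of $\Re(z-w)$; then you invoke Montel. The one substantive technical difference lies in how the term $-\Re\{\langle\zeta\rangle\,p(\tau-w)\}$ is handled in the deformed regime. You bound it in absolute value by $\sqrt{2}\,|\xi|\sup|p|=O(a^{2k})|\xi|$ and then need $a$ small enough (using $2k-1\geq1$) so that $\lambda_0 a/4$ dominates $Ca^{2k}$; the paper instead writes $p(\tau-w)=p(\tau-\Re w)+f'(t^{\ast})$ via the mean value theorem (see \eqref{eqf1}--\eqref{semponsro}), observes that $p(\tau-\Re w)\geq 0$ so the corresponding contribution to $\Re Q$ is nonpositive and can simply be dropped, and is left only with an $O(\delta)$ error. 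The paper's version therefore closes without any constraint relating $a$ to the constants of $p$, whereas yours requires the extra freedom to shrink $a$ at the outset (which is legitimate, since $a$ only needs $B_a(x_0)\subset V$, but this should be said explicitly, since $a$ was fixed earlier in the exposition). Two smaller points worth attending to: your $\xi$-deformation $\xi\mapsto\xi+i\lambda_0|\xi|e_w$ is not smooth at $\xi=0$, so you should restrict it to $|\xi|\geq1$ and treat the compact region $|\xi|\leq1$ trivially, as the paper does with its $\mu_2^\epsilon$ piece; and the Fubini rearrangement you perform (pulling the Radon measure $\dd\nu(w)$ outside) requires the very absolute integrability you are in the process of proving, so you should either justify it by establishing the pointwise-in-$w$ bound first and then applying $\sup_{w}$ (as the paper does), or proceed with the estimate before invoking Fubini.
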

\begin{proof}
First, we point out that we are going to choose $r$ small such that $B_r(x_0)\subset W,$ so we can disregard the term $\chi/(2\pi)^N$ from the definition of $\mu_1^\epsilon$ and we can consider $\mu_1^{\epsilon}$ as a holomorphic function in $B_r^\C(x_0)$. We can write:
  \begin{align}\label{oieurgbi2}
    \mu_1^\epsilon(z)&=  \int_{ \RN} \int_{|\tau- x_0|\ge a} e^{i(z- \tau)\xi -\epsilon |\xi|^{2}} \mathcal{F}_{p}\mu(\tau, \xi) |\xi|^{\frac{N}{2k}} \dd \tau \dd \xi
\notag\\[5pt]
&=\mu_2^{\epsilon}(z)+ \mu_{3}^{\epsilon}(z) + \mu_4^{\epsilon}(z)
  \end{align}
%
 where, for a  positive number $A>a$ to be determined, 
    \begin{align*}
    \mu_2^{\epsilon} &\textrm{ is the integral over } X_2=\{ (\tau, \xi): a\leq |\tau-x_0|\leq A, |\xi|\leq 1\}; \\
    \mu_3^{\epsilon} &\textrm{ is the integral over } X_3=\{ (\tau, \xi): |\tau-x_0|\geq A, |\xi|\in \RN\}; \\
    \mu_4^{\epsilon} &\textrm{ is the integral over } X_4=\{ (\tau, \xi): a\leq |\tau-x_0|\leq A, |\xi|\geq 1\}. 
  \end{align*}

It will be enough to prove that there exist $r>0$ and $M>0$ such that $|\mu_j^{\epsilon}(z)|\leq M$ for all $z \in B_r^\C(x_0)$, every $0< \epsilon< 1$ and $j\in\{1,2,3\}$. Since $X_2$ is a compact set it is clear that if $z$ is in a bounded neighborhood of $x_0$, then $\mu_2^{\epsilon}(z)$ is bounded independently of $\epsilon$.

For $\mu_3^{\epsilon}(z)$ our goal is to guarantee that $X_3$ is chosen such that $|\Re p(\tau-w)|\geq |\tau-w|^{2k}$. It is here that we will choose $A$. First, let $l\geq 1$ be such that $\overline{U} \subset B_{l}(x_0)$ and define $A= 2l$, thus
\begin{equation}\label{faiu}
|\tau-x_0|\geq A\  \text{ and }  |w-x_0|\leq l \ \text{ implies }\ |\tau- w|\geq |\tau-x_0|/2\geq A/2\ge1.
\end{equation}
Let $\delta>0$ be small enough so that $\overline{U}_{\delta}\subset B_{l}^\C(x_0)$ and also 
\begin{equation}\label{o87bt}
\tau-w \in \Gamma_\rho \ \text{ for all }\ w\in \overline{U}_{\delta} \ \text{ and } \ \tau\in \RN \ \text{ satisfying }\ |\tau-x_0|\geq A, 
\end{equation}
for some $\rho>0$ for which the inequality \eqref{DesParComp} holds.

With these choices, one can estimate $\big|\mu_3^{\epsilon}(z)\big|$ as follows
  \begin{align*}
    \big|\mu_3^{\epsilon}(z)\big|       &= c_p \Big|\mu_w\Big(\int_{\RN} \int_{|\tau-x_0|\geq A} e^{i(z-w)\xi- \epsilon|\xi|^{2} -|\xi| p (\tau-w)}|\xi|^{\frac{N}{2k}} \dd \tau \dd \xi\Big)\Big|
\\[5pt]
                        &\leq c_pC_\delta\sup_{w \in \overline{U}_{\delta}} \int_{\RN} \int_{|\tau-x_0|\geq A} e^{\Re \{i(z-w)\xi\} -|\xi| c'|\tau-w|^{2k}}|\xi|^{\frac{N}{2k}} \dd \tau \dd \xi
\\[5pt]
&\leq c_pC_\delta\sup_{w \in \overline{U}_{\delta}} \int_{\RN} \int_{|\tau-x_0|\geq A} e^{\Re \{i(z-w)\xi\}}e^{-\frac{|\xi| c' A^2}{8}}e^{ -|\xi| \frac{c'}{2}|\tau-w|^{2}}|\xi|^{\frac{N}{2k}} \dd \tau \dd \xi\\
            &\leq C' \sup_{w \in \overline{U}_{\delta}} \int_{\RN}  e^{\Re \{i(z-w)\xi\} }e^{-\frac{ c' A^2}{8}|\xi|}|\xi|^{- \frac{(k-1)N}{2k}} \dd \xi.
  \end{align*}
%
Choosing $r$ small enough so that  $|\Im (z-w)|< c' A^2/8$ for every $z\in B_r(x_0)$ we see that there exists a positive constant $C''$ independent of $\epsilon$ such that $|\mu^{\epsilon}_3(z)|\leq C''$.

Now we consider $\mu_4^\epsilon$:
  \begin{align*}
    |\mu_4^{\epsilon}(z)|  &=c_p\Big|\mu_w\Big( \int_{a\leq |\tau-x_0|\leq A} \int_{|\xi|\geq 1}e^{i(z-w)\xi- \epsilon|\xi|^{2} -|\xi| p (\tau-w)}|\xi|^{\frac{N}{2k}}  \dd \xi \dd \tau\Big)\Big|\\
    &\leq C_{\delta} \sup_{w\in \overline{U}_\delta} \bigg| \int_{a\leq |\tau-x_0|\leq A} \int_{|\xi|\geq 1}e^{i(z-w)\xi- \epsilon|\xi|^{2} -|\xi| p (\tau-w)}|\xi|^{\frac{N}{2k}}  \dd \xi \dd \tau \bigg|.
  \end{align*}
  The last inequality ensures that to estimate $|\mu_4^{\epsilon}(z)|$ with $z\in B_r^\C(x_0)$, it is enough to find a positive constant $M$ independent of $\epsilon$ such that
\begin{equation}\label{faliue}
\displaystyle q(z, w, \epsilon,\tau)=\Big| \int_{|\xi|\geq 1}e^{i(z-w)\xi- \epsilon|\xi|^{2} -|\xi| p (\tau-w)}|\xi|^{\frac{N}{2k}}  \dd \xi\Big|\leq M
\end{equation}
for every $\epsilon>0$, $z\in B_r(x_0)$, $w \in \overline{U}_{\delta}$ and $\tau\in\RN$ satisfying $a\leq |\tau-x_0|\leq A.$
Since $q$ is a continuous function it will be enough to prove \eqref{faliue} when $\Re(z-w) \neq 0$.

 Choose $s>0$ such that $s|\Re(z-w)|<1/2$ for every $z\in B_r^\C(x_0)$ and every $w\in \overline{U}_\delta.$ Fix $z\in B_r^\C(x_0)$, $w\in \overline{U}_\delta$ with $\Re(z-w) \neq 0$. Now fix $R>1$ and consider 
  \begin{align*}
  \Omega_R= \{ \zeta \in \CN: 1<|\Re\zeta|<R, \Im \zeta= \sigma s \Re(z-w)|\Re \zeta|\textrm{ for some } \sigma\in(0,1) \}.
  \end{align*}
Note that $\Omega_R\subset \Gamma_{\frac{1}{2}}\dot{=} \{\zeta\in \CN: |\Im \zeta|< \frac{1}{2}|\Re \zeta|\}$ thus
  \begin{equation}\label{laueyrt}
 \frac{\sqrt{3}}{2}|\Re \zeta|\leq \Re \langle \zeta\rangle\leq |\Re \zeta|\quad \text{and}\quad | \Im \langle \zeta\rangle|\leq \frac{1}{2}|\Re \zeta|, \quad \forall\, \zeta \in \Gamma_{\frac{1}{2}}.
  \end{equation}
The boundary in the Stokes' theorem sense of $\Omega_R$ is given by the disjoint union 
\[
(\del \Omega_R)_1\cup (\del \Omega)_2\cup (\del \Omega_R)_3\cup(\del \Omega_R)_4
\]
where:
  \begin{align*}
    (\del \Omega_R)_1&=\{ \zeta \in \CN: 1< |\Re\zeta| < R \textrm{ and } \Im \zeta=0 \};\\
    (\del \Omega)_2 &=\{ \zeta \in \CN: |\Re\zeta|=1 \textrm{ and } \Im \zeta= \sigma s \Re(z-w)\textrm{ for some } \sigma\in[0,1] \};\\
    (\del \Omega_R)_3 &=\{ \zeta \in \CN: 1< |\Re\zeta|< R \textrm{ and } \Im \zeta=  s \Re(z-w)|\Re \zeta| \};\\
    (\del \Omega_R)_4 &=\{ \zeta \in \CN: |\Re\zeta|=R \textrm{ and } \Im \zeta= \sigma s \Re(z-w)R\textrm{ for some } \sigma\in[0,1] \}.
  \end{align*}
Applying the Stokes' theorem in $\Omega_R$ to the closed holomorphic $N$-form
  \begin{align}\label{oe8rn}
    \omega_\epsilon(\zeta)= e^{Q(z,w,\zeta,\epsilon)}\langle\zeta\rangle^{\frac{N}{2k}}  \dd \zeta,
  \end{align}
where
\begin{align}\label{Qagain}
   Q(z,w,\zeta,\epsilon):=i(z-w)\zeta- \epsilon\langle \zeta\rangle^{2} -\langle \zeta\rangle p (\tau-w)
\end{align}
we obtain
  \begin{align}\label{kanie}
    \int_{1\leq |\xi|\leq R}e^{i(z-w)\xi- \epsilon|\xi|^{2} -|\xi| p (\tau-w)}|\xi|^{\frac{N}{2k}}  \dd \xi= - \int_{(\del \Omega)_2} \omega_\epsilon- \int_{(\del \Omega_R)_3} \omega_\epsilon - \int_{(\del \Omega_R)_4} \omega_\epsilon.
  \end{align}

We will now analyze each one of the integrals in the right hand-side of \eqref{kanie} separately:

\medskip

\noindent\underline{The integral over $(\del \Omega)_2$:}
Clearly  $\omega_\epsilon$ can be bounded independently of $\epsilon$ on $(\del \Omega)_2$ and it follows that $\int_{(\del\Omega)_2}\omega_\epsilon$ is uniformly bounded.

\medskip

\noindent\underline{The integral over $(\del \Omega_R)_4$:}
We claim that 
\begin{equation}\label{iurhas}
\int_{(\del \Omega_R)_4} \omega_\epsilon\lra 0 \quad \text{ when } \quad R\lra +\infty.
\end{equation}
In fact, in this case we can estimate $\Re Q(z,w,\zeta,\epsilon)$ with $Q$ given in  \eqref{Qagain} as
  \begin{align*}
    \Re Q(z,w,\zeta,\epsilon)&=\Re\{ i(z-w)\zeta- \epsilon\langle \zeta\rangle^{2}- \langle \zeta\rangle p(\tau- w)\}
\\[5pt]
& \leq  R \big[-s\sigma |\Re (z-w)|^{2} + |\Im (z-w)| + |\Re p(\tau-w)|+ \frac{1}{2} |\Im p(\tau-w)|
\\[5pt]
&\hspace{10pt}- \epsilon R_0 (1-s^{2}|\Re( z- w)|^{2})\big], 
    \end{align*}
for any $ 0<R_0\leq R$.    Since $z$, $\tau$ and $w$ are varying in bounded sets  we can further estimate $\Re Q(z,w,\zeta,\epsilon)$ as
  \begin{align}\label{fawiue}
    \Re Q(z,w,\zeta,\epsilon)& \leq  R \big[C - \epsilon R_0 (1-s^{2}|\Re( z- w)|^{2})\big].
    \end{align}
If $R$ is big enough, we can find $R_0$ such that the expression between brackets in \eqref{fawiue} is negative.
Fixing such $R_0$ and using the fact that $\langle \zeta\rangle^{N/2k}$ is homogeneous of degree $N/2k$ we conclude the claim.  

Therefore, for each $\epsilon>0$ fixed we are allowed to make $R\to\infty$ in \eqref{kanie}, to obtain
  \begin{align*}
    \int_{1\leq |\xi|}e^{i(z-w)\xi- \epsilon|\xi|^{2} -|\xi| p (\tau-w)}|\xi|^{\frac{N}{2k}}  \dd \xi= - \int_{(\del \Omega)_2} \omega_\epsilon- \int_{(\del \Omega)_3^{\ast}} \omega_\epsilon
  \end{align*}
  where $(\del \Omega)_3^{\ast}=\{ \zeta \in \CN: 1\leq |\Re\zeta|\textrm{ and } \Im \zeta=  s \Re(z-w)|\Re \zeta| \}$.

\medskip

\noindent\underline{The integral over $(\del \Omega)_3^\ast$:}
  Now it remains to  prove  that  $\int_{(\del\Omega)_3^{\ast}}\omega_\epsilon$ converges independently of $\epsilon$.
In fact, choose $\lambda$ small and $r_0>0$ such that $a-r_0>2\lambda$. Hence
\begin{equation}\label{aksueyt}
\text{ if $a\leq |\tau- x_0|$ and $|z-x_0|< r_0$, then $|z-\tau|> 2\lambda$.}
\end{equation} 
We are going to assume that $0<r\leq r_0$. For a technical reason  that will be clarified soon consider $f:[0,1] \lra \R$ given by
  \begin{align}\label{eqf1}
    f(t)= p(\tau- \Re w- i t\Im w).
  \end{align}
The mean value theorem guarantees that there exists $t^{\ast} \in [0,1]$ such that
  \begin{align}\label{eqf12}
    p(\tau- w)- p(\tau- \Re w)= f(1)-f(0)= f'(t^{\ast}).
  \end{align}
  Note that
  \begin{align*}
    f'(t)= \sum_{j=1}^N \frac{\del p}{\del z_j} (\tau - \Re w- i t \Im w)\cdot (-i\Im w_j).
  \end{align*}
  Therefore, we use again that $\tau$ and $w$ are in compact sets together with $|\Im w|\leq \delta$ to obtain that there exists a positive constant $B$ in which
\begin{equation}\label{qe4bnr0}
|f'(t^{\ast})|\leq B \delta.
\end{equation}
Using \eqref{eqf1} and  \eqref{eqf12} we can rewrite $\Re Q(z,w,\zeta,\epsilon)$ with $Q$ given by  \eqref{Qagain} as:
  \begin{align}\label{semponsro}
    \Re Q(z,w,\zeta,\epsilon)
=&- s|\xi||\Re z-\Re w|^2 - (\Im z- \Im w)\xi - \Re \langle \zeta \rangle  p(\tau-\Re w)
\\[5pt]
& -\Re \langle \zeta \rangle  \Re f'(t^{\ast})+\Im \langle \zeta \rangle  \Im f'(t^{\ast})- \epsilon (|\xi|^{2} -s^2|\xi|^{2}|\Re z-\Re w)|^{2}).
\notag
  \end{align}
Our choice of $s$ implies that $|\xi|^{2} -s^2|\xi|^{2}|\Re z-\Re w)|^{2}\geq |\xi|^{2}/2$, thus we  may disregard the part of $\Re Q(z,w,\zeta,\epsilon)$ depending on $\epsilon$.
Thanks to \eqref{qe4bnr0} we have that
  \begin{align}\label{aliru76765}
    -\Re \langle \zeta \rangle  \Re f'(t^{\ast})  \leq |\xi|B \delta \quad\textrm{ and }\quad
    \Im \langle \zeta \rangle  \Im f'(t^{\ast})\leq \frac{1}{2} |\xi| B \delta,
  \end{align}
for every $ w\in \overline{U}_\delta,  z\in B_r^\C(x_0)$.

%
Moreover, it is clear that either $|\Re z- \Re w|> \lambda$ or $|\Re w- \tau|> \lambda$, otherwise, $|\Re z- \tau|< 2\lambda$ which is a contradiction with our choices of $\lambda$ and $r_0$, see \eqref{aksueyt}.
It follows from \eqref{ex:2} and \eqref{laueyrt}  that
\begin{align}
  \Re \langle \zeta\rangle p(\tau - \Re w) &\geq c' \frac{\sqrt{3}}{2}|\xi| |\tau- \Re w|^{2k}
  \nonumber\\
  &\geq \left\{\begin{array}{ll}
            c' \lambda^{2k}\frac{\sqrt{3}}{2} |\xi|, \, &\textrm{ when } |\tau -\Re w|> \lambda\\
            0, \, &\textrm{ otherwise.}
  \end{array}
  \right.\label{g4ir098}
  \end{align}
  Also, we have
  \begin{align}
     s|\xi||\Re z-\Re w|^2 \geq \left\{\begin{array}{ll}
            s |\xi|\lambda^{2}, \, &\textrm{ when } |\Re z-\Re w|>\lambda\\
            0, \, &\textrm{ otherwise.}
  \end{array}\right. \label{g4isas8}
  \end{align}

Grouping \eqref{qe4bnr0}, \eqref{semponsro}, \eqref{aliru76765},   \eqref{g4ir098} and \eqref{g4isas8} we obtain, when $|\tau -\Re w|> \lambda$, that
\begin{align}\label{aieurh6as}
\Re Q(z,w,\zeta,\epsilon)&\le- \Big(c'\lambda^{2k}\frac{\sqrt{3}}{2}  -r- \delta- \frac{3}{2} B\delta \Big)|\xi|,
\end{align}
%
and in the case where
 $|\Re z- \Re w|> \lambda$, it holds
\begin{align}\label{aieurh6}
\Re Q(z,w,\zeta,\epsilon)&\le-  \Big(s\lambda^{2} -r- \delta - \frac{3}{2} B\delta \Big)|\xi|.
\end{align}

Finally, it is clear from the expressions \eqref{aieurh6as} and \eqref{aieurh6} that one can further reduce $r$ and $\delta$, if necessary, to guarantee that there exists a positive constant $c$ independent of $\epsilon>0$ so that 
\begin{equation}\label{anjenfo}
\Re Q(z,w,\zeta,\epsilon)\le -c|\xi|, 
\end{equation}
for all $w\in \overline{U}_\delta, \, z\in B_r^\C(x_0)$.
In conclusion, \eqref{anjenfo} guarantees the
 integrability of $\omega_\epsilon$ over $(\del \Omega)_3^{\ast}$, independently of $\epsilon>0$ and therefore,  $|\mu_4^{\epsilon}|$ is bounded uniformly in $\epsilon>0$. This completes the proof.
\end{proof}

\section{A FBI classification of others types of wave-front set}\label{AFBIClassification}

Given a subsheaf $\mathcal{S}$ of $\mathcal{B}$ assume that we have already defined a condition $\mathfrak{M}(\mathcal{S})$ of {\it microlocal regularity in $V\times \Gamma$ with respect to $\mathcal{S}$.} Established this, we can define conditions $(\mathfrak{R}_1^{\mathcal{S}})$ and $(\mathfrak{R}_2^{\mathcal{S}})$ at a covector $(x_0, \xi_0) \in \RN\times (\RN\setminus \{0\})$ for a hyperfunction $u \in \mathcal{B}(\RN)$ in the following way:
\begin{itemize}
\item [$(\mathfrak{R}_1^{\mathcal{S}})$] There exists an open neighborhood $U$  of $x_0$ in $\RN$ such that, for every open neighborhood $V\subset\subset U$ of $x_0$ and every  representative $\mu \in \mathcal{O}'(\overline{U})$ of $u$ in $U$, there are a conic neighborhood $\Gamma$ of $\xi_0$ in $\RN$ such that $\mu$ satisfies condition $\mathfrak{M}(\mathcal{S})$ in $V\times \Gamma.$
\item [$(\mathfrak{R}_2^{\mathcal{S}})$]
  There exist  open neighborhoods $U, V$  of $x_0$ in $\RN$, $V\subset\subset U,$ a  representative $\mu \in \mathcal{O}'(\overline{U})$ of $u$ in $U$, a conic neighborhood $\Gamma$ of $\xi_0$ in $\RN$ such that $\mu$ satisfies condition $\mathfrak{M}(\mathcal{S})$ in $V\times \Gamma.$
\end{itemize}

Of course, $(\mathfrak{R}_1^{\mathcal{S}})$ implies $(\mathfrak{R}_2^{\mathcal{S}})$ for every sheaf $\mathcal{S}$.

\subsection{Microlocal Smoothness}

Let us now consider $\mathcal{S}= \Cinf$. In this context, we say that a hyperfunction $u$ is microlocally smooth or microlocally $\Cinf$-regular and denote its  smooth wave-front set  by  $WF_{\Cinf}(u)$. 
Let $\mu\in \mathcal{O}'(\overline{U})$. Given an open subset  $V\subset\subset U$  and a cone $\Gamma\in \RN$  we will say that $\mu$ satisfies the condition $\mathfrak{M}(\Cinf)$ of {\it microlocal smooth regularity in $V\times \Gamma$} if for every integer $q$ there is $C_q>0$ such that
  \begin{align} \label{DesCinf}
  \tag{$\mathfrak{M}(\Cinf)$}
    |\mathcal{F}_{p}\mu(\tau, \xi)|\leq C_q (1+|\xi|)^{-q}, \quad \forall (\tau, \xi) \in V\times \Gamma.
  \end{align}

\begin{Teo}\label{SmoothTheo}  Let $u \in \mathcal{B}(\RN)$, $x_0 \in \RN$ and $\xi_0 \in \RN\setminus\{0\}$. Then $(x_0, \xi_0)\notin WF_{\Cinf}(u)$ if and only if condition $(\mathfrak{R}_1^{\Cinf})$ holds at $(x_0, \xi_0)$ if and only if condition $(\mathfrak{R}_2^{\Cinf})$ holds at $(x_0, \xi_0)$.
\end{Teo}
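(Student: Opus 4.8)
The plan is to reprise, \emph{mutatis mutandis}, the three-step proof of Theorem~\ref{Theo2.4}, replacing the sheaf $C^{\omega}$ by $\Cinf$ and the exponential decay of $\mathfrak{M}(C^{\omega})$ by the rapid polynomial decay of $\mathfrak{M}(\Cinf)$. As always $(\mathfrak{R}_1^{\Cinf})\Rightarrow(\mathfrak{R}_2^{\Cinf})$ is immediate, so it suffices to prove that $(x_0,\xi_0)\notin WF_{\Cinf}(u)$ implies $(\mathfrak{R}_1^{\Cinf})$ and that $(\mathfrak{R}_2^{\Cinf})$ implies $(x_0,\xi_0)\notin WF_{\Cinf}(u)$.

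For the first implication, Definition~\ref{alsiu980} with $\mathcal{S}=\Cinf$ furnishes an open neighborhood $U$ of $x_0$, convex cones $\Gamma_1,\dots,\Gamma_\ell$ opposing $\xi_0$ and $F_j\in\mathcal{O}(\mathcal{W}_\delta(U;\Gamma_j))$ with $u|_U-\sum_{j=1}^{\ell}\bb F_j=\iota(v)$ for some $v\in\Cinf(U)$. Fix $V\subset\subset U$ and a representative $\mu\in\mathcal{O}'(\overline U)$ of $u$; by the discussion following Proposition~\ref{Invariance} it costs only an exponentially decaying error on $V\times\RN$ to replace $\mu$ by $\sum_j\nu_{F_j}+\nu_{\varphi v}$, where $\varphi\in\Cinf_c(U)$ equals $1$ on a neighborhood of $\overline V$. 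Corollary~\ref{WFSSBVH} gives a conic neighborhood $\Gamma'$ of $\xi_0$ and $C,R>0$ with $|\mathcal{F}_p(\sum_j\nu_{F_j})(\tau,\xi)|\le Ce^{-R|\xi|}$ on $V\times\Gamma'$, which is far stronger than needed. For the remaining term, $\mathcal{F}_p\nu_{\varphi v}(\tau,\xi)=c_p\int (\varphi v)(x)\,e^{\Phi(x,\tau,\xi)}\,\dd x$ with $\Phi=i(\tau-x)\xi-|\xi|p(\tau-x)$; since $\partial_x\Phi=-i\xi+|\xi|(\nabla p)(\tau-x)$ has, for real $\tau,x$ (the coefficients of $p$ being real), $|\partial_x\Phi|^{2}=|\xi|^{2}\bigl(1+|(\nabla p)(\tau-x)|^{2}\bigr)\ge|\xi|^{2}$, iterating the standard integration by parts in $x$ yields $|\mathcal{F}_p\nu_{\varphi v}(\tau,\xi)|\le C_q(1+|\xi|)^{-q}$ for every $q$, uniformly for $\tau$ in the interior of $\{\varphi=1\}$ --- hence on $V$ --- and for all $\xi\in\RN$. (This is the FBI--BH description of microlocal smoothness and may also be quoted from \cite{BH}.) Adding the two estimates gives $\mathfrak{M}(\Cinf)$ on $V\times\Gamma'$, so $(\mathfrak{R}_1^{\Cinf})$ holds.

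For the converse I would run the machinery of Section~\ref{secProfTheo2.4}. Starting from $(\mathfrak{R}_2^{\Cinf})$, and after shrinking $U$ and $V$ (legitimate by the observation on $(\mathfrak{R}_1)$, $(\mathfrak{R}_2)$ at the end of Section~\ref{secMicroreghyp}) so that the constructions of Section~\ref{secProfTheo2.4} apply, one forms $\mu_0^{\epsilon},\mu_1^{\epsilon}$; since $\mu_0^{\epsilon}\to\mu_0$ in $\mathcal{O}'(\overline{W'})$ and, by Proposition~\ref{Claim1}, $\mu_1$ is real-analytic near $x_0$, the hyperfunctions $u$ and $u_0$ (the latter represented by $\mu_0$) have the same smooth wave-front set at $x_0$. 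Taking the microlocal decomposition $\mu_0=\mu_{F_1,W'}+\sum_{j=2}^{L}(\mu_{f_j,W'}+\mu_{R_j,W'})$ of Definition~\ref{MLdecom}, in which every term but $\mu_{F_1,W'}$ is a boundary value of holomorphic functions in cones opposing $\xi_0$, Proposition~\ref{FundamentalRemark} (valid for $\mathcal S=\Cinf$, which contains $C^{\omega}$) reduces us to showing that $\bb F_1$ is smooth near $x_0$. Now $F_1$ is given by \eqref{Fum}; since $\xi_0\in\mathcal{C}_1\subset\Gamma$ and $B_a(x_0)\subset V$, the estimate $\mathfrak{M}(\Cinf)$ holds on the whole region of integration in \eqref{Fum}, and using $|e^{i\xi(x+iy-\tau)}|=e^{-\xi\cdot y}\le 1$ for $y\in\Gamma_1^{c}$ and $\xi\in\mathcal{C}_1$ (by \eqref{liaus}) one may differentiate under the integral sign to get, for every multi-index $\alpha$ and $q$ large,
\[
|\partial_x^{\alpha}F_1(x+iy)|\le C\int_{\mathcal{C}_1}|\xi|^{|\alpha|}(1+|\xi|)^{-q+\frac{N}{2k}}\,\dd\xi<\infty,
\]
uniformly for $x$ in compact sets and $y\in\Gamma_1^{c}$. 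Hence, along a ray $y=ty_0\in\Gamma_1^{c}$, $F_1(\,\cdot+ity_0)$ is Cauchy in $\Cinf_{\loc}$ as $t\downarrow 0$ and converges to a function $G\in\Cinf(\RN)$; comparing $\mu_{F_1,W'}^{\,y}$ with $h\mapsto\int_{W'}Gh\,\dd x$ as $y\to 0$ and invoking Remark~\ref{faluh}~(i) shows that $\bb F_1$ agrees with the image of $G$ near $x_0$, whence $(x_0,\xi_0)\notin WF_{\Cinf}(u)$.

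The genuinely new ingredient compared with Theorem~\ref{Theo2.4} is the rapid-decay estimate for the FBI transform of a compactly supported smooth function, which here plays the role that the exponential estimate for real-analytic sections played in the analytic case; it is a routine integration by parts (or an appeal to \cite{BH}). By contrast, the step that required delicate contour deformations there --- recovering the regularity of $F_1$ from the FBI estimate --- collapses to differentiation under the integral sign, so I expect the only point needing care to be the clean identification of the $\Cinf_{\loc}$-limit of the slices of $F_1$ with the hyperfunction $\bb F_1$.
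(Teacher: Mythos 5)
Your proposal is correct and follows essentially the same route as the paper: Corollary~\ref{WFSSBVH} plus the rapid-decay FBI estimate for compactly supported smooth functions (which you derive by integration by parts --- the paper simply quotes \cite[Theorem~3.1]{BH}) for one implication, and Proposition~\ref{FundamentalRemark} together with the smooth extension of $F_1$ from \eqref{Fum} for the converse. The one spot you flag as needing care --- identifying $\bb F_1$ with the $\Cinf$-function $G$ --- is exactly where the paper inserts a short Stokes' theorem computation showing $\mu_{F_1,W'}^{y_1}-\nu_{F_1,W'}$ is carried by a thin shell around $\del W'$, which makes the identification clean and avoids any subtlety about constants degenerating as $y\to 0$.
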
  
\begin{proof}
  Let us assume that $(x_0, \xi_0) \notin WF_{\Cinf}(u)$ and prove that $(\mathfrak{R}_1^{\Cinf})$ holds in $(x_0, \xi_0)$. 
 Using Definition~\ref{alsiu980}, we can find $\ell$ open convex cones $\Gamma_1, \ldots, \Gamma_\ell\subset \RN\setminus \{0\}$ and functions $F_1, \ldots, F_\ell$, with $F_j \in \mathcal{O}(\mathcal{W}_\delta(U; \Gamma_j)),$ satisfying properties $(1)$ and $(2)$ of microlocal smoothness. Define $v= u - \sum_{j=1}^{\ell}\bb F_j\in \Cinf(U)$ and let $\mu$ and $\nu$ be representatives in $U$ of $u$ and $v,$ respectively. Note that $\nu$ is an analytic functional defined by a smooth function in $U$, let us denote it by $f$. Thanks to Proposition~\ref{FBIBHBV}, there exist $V$ and $\Gamma$ such that \eqref{DesCinf} holds for $\mu$ if and only if it holds for $\nu$. 
  
  Choose $W$ an open subset of $\RN$ satisfying $V\subset\subset W\subset\subset U$ along with $\varphi\in \Cinf_c(W)$ such that $\varphi=1$ in $V.$ We have that $\nu- \nu_{\varphi f} \in \mathcal{O}'(\overline{U}\setminus V).$ Thus
  \begin{align}\label{daifuertrtr}
    \mathcal{F}_p \nu(\tau, \xi)= \mathcal{F}_p(\nu-\nu_{\varphi f})(\tau, \xi)+ \mathcal{F}_p \nu_{\varphi f}(\tau, \xi).
  \end{align}
  Since $\nu-\nu_{\varphi f}$ is an analytic functional carried by $\overline{U}\setminus V$  Proposition~\ref{Invariance} implies that its FBI transform satisfies \eqref{DesCinf} in $V\times \RN.$ Finally, note that
  \begin{align}\label{bhwekuyg}
    \mathcal{F}_p \nu_{\varphi f}(\tau, \xi)&= c_p \int_{U} \varphi(w) f(w) e^{i(\tau-w)\xi- |\xi|p(\tau-w)} \dd w
    \notag\\
    &=\mathcal{F}_p \big(\varphi f\big)(\tau, \xi),
  \end{align}
where the  FBI transform appearing in last line of \eqref{bhwekuyg} is in the sense of distributions. It now follows from \cite[Theorem 3.1]{BH} that for every positive integer $q$ there is a constant $C_q>0$ such that
  \eqref{DesCinf} is valid.

Now let us assume condition $(\mathfrak{R}_2^{\Cinf})$ at $(x_0, \xi_0)$ and prove that $(x_0, \xi_0) \notin WF_{\Cinf}(u)$.  Thanks to Proposition~\ref{FundamentalRemark}, it will be enough to show that the boundary value $\bb F_1$ is a smooth function in a neighborhood of $x_0$. Note that $F_1$ is a holomorphic function in $\RN+ i \Gamma_1^{c}$ that has a smooth extension to $\RN$:
  \begin{align*}
      F_1(x) =\frac{1}{(2\pi)^{N}} \int_{\mathcal{C}_1} \int_{|\tau-x_0|\leq a} e^{i\xi(x-\tau)} \mathcal{F}_{p}\mu (\tau, \xi)|\xi|^{\frac{N}{2k}} \dd \tau \dd \xi.
  \end{align*}
  Let $\nu_{F_1, W'}\in \mathcal{O}'(\overline{W'})$ be defined by
  \begin{align*}
   \nu_{F_1,W'}(h)= \int_{W'} F_1(x) h(x) \dd x, \quad \forall h \in \mathcal{O}(\CN).
  \end{align*}

 It remains to show that  $\mu_{F_1, W'}- \nu_{F_1, W'}\in \mathcal{O}'(\del W')$. To do so choose an open neighborhood $\mathcal{U}$ of $\del W'$ and $y_1 \in \Gamma_1^{c}$ such that $\mu_{F_1, W'}- \mu _{F_1,W'}^{y_1}\in \mathcal{O}'(\mathcal{U}).$ Let us prove that $\mu _{F_1,W'}^{y_1}-\nu_{F_1, W'}\in \mathcal{O}'(\mathcal{U}).$ To this end, let $h \in \mathcal{O}(\CN)$ and denote by $\sigma$ the measure induced on $\del W$ and compute
  \begin{align*}
    \big(\mu _{F_1,W'}^{y_1}-\nu_{F_1,W'}\big)(h)&= \int_{W'} \big(F_1(x+iy_1) h(x+iy_1)- F_1(x) h(x)\big) \dd x\\
    &= \int_{0}^{1} \int_{\del W'} F_1(x+it y_1) h(x+i t y_1) \dd \sigma(x) \dd t,
  \end{align*}
 which is an analytic functional carried by $\mathcal{U}$, as we wished to prove.
\end{proof}

\subsection{Microlocal regularity in Denjoy-Carleman quasianalytic and  non-quasianalytic classes of Roumieu type}

In this subsection we  will consider the regularity according sheaves associate to Denjoy-Carleman quasianalytic  and non-quasianalytic spaces
which are given in  Definition~\ref{regDenjoyCarlemanfucntionsdef}, recall that we are always assuming that $M=(M_k)_{k\in \Z_+}$ is a regular sequence as in Definition \ref{regseqdefinition}.
In this context, we say that a hyperfunction $u$ is microlocally  $\mathcal E^{\{M\}}$-regular  and denote its wave-front set  by $WF_{\mathcal{E}^{\{M\}}}(u)$.

The condition $\mathfrak{M}(\mathcal{E}^{\{M\}})$  of { \it microlocal regularity in $V\times \Gamma$  with respect to the classes $\mathcal{E}^{\{M\}}$} for an analytic functional $\mu$  is defined using the associated function $M(t)$ (see \eqref{asssssssociate}) in the following way: there are constants $C,c>0$  such that
  \begin{align}\label{DesCM}
  \tag{$\mathfrak{M}(\mathcal{E}^{\{M\}})$}
    |\mathcal{F}_{p}\mu(\tau, \xi)|\leq C e^{-M(c|\xi|)}, \quad \forall (\tau, \xi) \in V\times \Gamma.
  \end{align} 

 \begin{Teo}\label{DC-QA-NQA}
Let $u \in \mathcal{B}(\RN)$, $x_0 \in \RN$ and $\xi_0 \in \RN\setminus\{0\}$. Then $(x_0, \xi_0)\notin WF_{\mathcal{E}^{\{M\}}}(u)$ 
 if and only if condition $(\mathfrak{R}_1^{\mathcal{E}^{\{M\}}})$ holds at $(x_0, \xi_0)$ if and only if condition $(\mathfrak{R}_2^{\mathcal{E}^{\{M\}}})$ holds at $(x_0, \xi_0)$.
\end{Teo}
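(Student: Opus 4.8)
The plan is to follow the exact same architecture already used to prove Theorem~\ref{Theo2.4} and Theorem~\ref{SmoothTheo}, since the microlocal decomposition machinery (Definition~\ref{MLdecom}) was designed precisely to make the proof of these characterizations uniform across sheaves. We must establish two implications: first, that $(x_0,\xi_0)\notin WF_{\mathcal{E}^{\{M\}}}(u)$ implies $(\mathfrak{R}_1^{\mathcal{E}^{\{M\}}})$ (and hence $(\mathfrak{R}_2^{\mathcal{E}^{\{M\}}})$ for free); second, that $(\mathfrak{R}_2^{\mathcal{E}^{\{M\}}})$ implies $(x_0,\xi_0)\notin WF_{\mathcal{E}^{\{M\}}}(u)$. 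The only genuinely new inputs needed are the corresponding one-variable (distributional/ultradistributional) FBI-BH estimates for the Denjoy-Carleman class, which we import from Hoepfner--Medrado~\cite{HM} (and/or F\"urd\"os~\cite{Furdos}), exactly as the smooth case imported \cite[Theorem 3.1]{BH}.

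For the first implication, suppose $(x_0,\xi_0)\notin WF_{\mathcal{E}^{\{M\}}}(u)$. By Definition~\ref{alsiu980} there are open convex cones $\Gamma_1,\dots,\Gamma_\ell$ opposite to $\xi_0$ and $F_j\in\mathcal{O}(\mathcal{W}_\delta(U;\Gamma_j))$ with $v:=u-\sum_j\bb F_j\in\mathcal{E}^{\{M\}}(U)$. Let $\mu$ represent $u$ and $\nu$ represent $v$ in $U$; by Corollary~\ref{WFSSBVH} the boundary-value parts contribute an FBI transform that decays exponentially, hence certainly satisfies the weaker bound $e^{-M(c|\xi|)}$, so it suffices to estimate $\mathcal{F}_p\nu$. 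Writing $\nu=\nu_{\varphi f}+(\nu-\nu_{\varphi f})$ with $f\in\mathcal{E}^{\{M\}}(U)$, $\varphi\in\Cinf_c(W)$, $\varphi\equiv1$ on $V$, $V\subset\subset W\subset\subset U$, we note $\nu-\nu_{\varphi f}\in\mathcal{O}'(\overline U\setminus V)$ so by Proposition~\ref{Invariance} its FBI transform decays exponentially on $V\times\RN$; and $\mathcal{F}_p\nu_{\varphi f}=\mathcal{F}_p(\varphi f)$ is the FBI-BH transform (in the ultradistribution sense) of the compactly supported element $\varphi f\in\mathcal{E}^{\{M\}}$, so the required bound $\mathfrak{M}(\mathcal{E}^{\{M\}})$ on $V\times\Gamma$ follows from the cited Hoepfner--Medrado result. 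Since this works for every $V\subset\subset U$ and every representative (invariance of the estimates, already established), we get $(\mathfrak{R}_1^{\mathcal{E}^{\{M\}}})$.

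For the converse, assume $(\mathfrak{R}_2^{\mathcal{E}^{\{M\}}})$: there is a representative $\mu$ with $|\mathcal{F}_p\mu(\tau,\xi)|\le Ce^{-M(c|\xi|)}$ on $V\times\Gamma$. By Proposition~\ref{FundamentalRemark} it is enough to show that the isolated piece $\bb F_1$ of the microlocal decomposition, with $F_1$ as in \eqref{Fum}, lies in $\mathcal{E}^{\{M\}}$ near $x_0$. Now $F_1$ extends to a function on $\RN$ given by
\begin{align*}
F_1(x)=\frac{1}{(2\pi)^N}\int_{\mathcal{C}_1}\int_{|\tau-x_0|\le a}e^{i\xi(x-\tau)}\mathcal{F}_p\mu(\tau,\xi)|\xi|^{\frac{N}{2k}}\,\dd\tau\,\dd\xi,
\end{align*}
and differentiating under the integral sign, the derivative $\del^\alpha F_1$ picks up a factor $\xi^\alpha$, which is controlled against $e^{-M(c|\xi|)}$ by the defining property of the associated function $M(t)$: the bound $|\xi|^{|\alpha|}e^{-M(c|\xi|)}\lesssim M_{|\alpha|}(c')^{-|\alpha|}$ (up to constants) gives Denjoy--Carleman estimates $|\del^\alpha F_1(x)|\le C h^{|\alpha|}M_{|\alpha|}$ locally uniformly, i.e. $F_1\in\mathcal{E}^{\{M\}}$ on a neighborhood of $x_0$. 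Finally, as in the smooth case, one checks $\mu_{F_1,W'}-\nu_{F_1,W'}\in\mathcal{O}'(\del W')$ by a Stokes' theorem argument shifting the contour along $y_1\in\Gamma_1^c$, so that $\bb F_1$ is represented by the $\mathcal{E}^{\{M\}}$ function $F_1|_{W'}$, hence $\bb F_1\in\mathcal{E}^{\{M\}}(W'')$ for $W''$ a small neighborhood of $x_0$; Proposition~\ref{FundamentalRemark} then yields $(x_0,\xi_0)\notin WF_{\mathcal{E}^{\{M\}}}(u)$.

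I expect the main obstacle to be purely bookkeeping: making sure the cited one-variable FBI-BH characterization from \cite{HM} (or \cite{Furdos}) is stated in a form that applies to a compactly supported section of $\mathcal{E}^{\{M\}}$ (and to its dual ultradistributional counterpart, if one also wants $\mathcal{D}'^{\{M\}}$), and tracking the regularity/non-quasianalyticity hypotheses on the sequence $M=(M_k)$ — the argument for $\bb F_1\in\mathcal{E}^{\{M\}}$ via the associated function is standard but relies on the stated properties of $M(t)$ in the preliminaries. The Stokes'-theorem contour-shift step and the invariance-under-change-of-representative reductions are verbatim the same as in Theorems~\ref{Theo2.4} and \ref{SmoothTheo}, so I would state them briefly and refer back rather than rewriting them.
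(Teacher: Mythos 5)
Your proposal follows exactly the architecture of the paper's proof: microlocal decomposition plus Corollary~\ref{WFSSBVH} and Proposition~\ref{Invariance} to reduce the forward implication to the estimate for a compactly supported $\mathcal{E}^{\{M\}}$ section (the paper cites \cite[Theorem 5.1]{Furdos} for this step, whereas you lead with \cite{HM}, but you mention both so this is immaterial), and Proposition~\ref{FundamentalRemark} plus the associated-function estimate $t^r/M_r\le e^{M(t)}$ combined with \eqref{M2'inducao} to show $F_1\in\mathcal{E}^{\{M\}}$ for the converse. The only thing glossed over is the extra power $|\xi|^{-\ell}$ one must insert to ensure integrability over the unbounded cone $\mathcal{C}_1$ before applying \eqref{M2'inducao}, but you correctly flag this as bookkeeping; the proof is essentially the paper's.
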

  

\begin{proof}
Let us assume that $(x_0, \xi_0)\notin WF_{\mathcal{E}^{\{M\}}}(u)$. Repeating the procedure in the proof of Theorem~\ref{SmoothTheo} and using the same notation we can write, analogously as in \eqref{daifuertrtr},
  \begin{align}\label{daifuer675t}
    \mathcal{F}_p \nu(\tau, \xi)= \mathcal{F}_p(\nu-\nu_{\varphi f})(\tau, \xi)+ \mathcal{F}_p \nu_{\varphi f}(\tau, \xi).
  \end{align}
where $W$ and $V$ are open subsets of $\RN$ satisfying $V\subset\subset W\subset\subset U$, $\varphi\in \Cinf_c(W)$ is such that $\varphi=1$ in $V$ and $\nu$ is an analytic functional defined by a  function $ f\in \mathcal{E}^{\{M\}}(U)$.
  Since $\nu-\nu_{\varphi f}$ is an analytic functional carried by $\overline{U}\setminus V$,  Proposition~\ref{Invariance} implies that its FBI transform satisfies \eqref{DesCM} in $V\times \RN.$ 
  Also, taking advantage of \eqref{bhwekuyg}, $\mathcal{F}_p \nu_{\varphi f}$ can be identified as the FBI transform in the sense of distributions. The proof now follows from \cite[Theorem 5.1]{Furdos}.

Let us now assume that the microlocal condition $(\mathfrak{R}_2^{\mathcal{E}^{\{M\}}})$ is valid at $(x_0, \xi_0)$ and we shall prove that $(x_0, \xi_0) \notin WF_{\mathcal{E}^{\{M\}}}(u)$. Repeating the argument of the smooth case, all we need to do is to prove is that
  \begin{align*}
    F_1(x)= \frac{1}{(2\pi)^{N}}\int_{\mathcal{C}_1} \int_{|\tau-x_0|\leq a} e^{i\xi(x-\tau)} \mathcal{F}_{p}\mu (\tau, \xi)|\xi|^{\frac{N}{2k}} \dd \tau \dd \xi
  \end{align*}
  is a well-defined function in $\mathcal{E}^{\{M\}}(\RN)$. 

Let $\ell$ be a positive integer such that $|\xi|^{\frac{N}{2k}- \ell}$ is integrable in $\RN$. Thus, using that
\begin{align*}
  \frac{t^r}{M_r} = e^{\log \frac{t^r}{M_r}}\leq e^{ M(t)}
\end{align*}
together with \eqref{M2'inducao} it follows that 
  \begin{align*}
     \int_{\mathcal{C}_1}  e^{-M(c|\xi|)}|\xi|^{\frac{N}{2k}+|\alpha|} \dd \xi&= c^{-|\alpha|-\ell} M_{|\alpha|+ \ell} \int_{\mathcal{C}_1}  e^{-M(c|\xi|)}|\xi|^{\frac{N}{2k}-\ell} \frac{ (c|\xi|)^{\ell+|\alpha|}}{M_{|\alpha|+\ell}} \dd \xi\\
&\leq M_{|\alpha|+ \ell} c^{-|\alpha|-\ell} \int_{\mathcal{C}_1} |\xi|^{\frac{N}{2k}-\ell}  \dd \xi\\
&\leq \tilde{C} \bigg( \frac{H^{\ell}}{c}\bigg)^{|\alpha|} M_{|\alpha|},
  \end{align*}
where 
\begin{align*}
  \tilde{C}=  A^{\ell} H^{ \ell(\ell+1)/2} c^{-\ell} \int_{\mathcal{C}_1} |\xi|^{\frac{N}{2k}-\ell}  \dd \xi.
\end{align*}

Therefore,
  \begin{align*}
    \sup_{x \in \RN} |\del_x^{\alpha} F_1(x)|  &\leq  \frac{1}{(2\pi)^{N}}\int_{\mathcal{C}_1} \int_{|\tau-x_0|\leq a} \big| \mathcal{F}_{p}\mu (\tau, \xi)\big||\xi|^{\frac{N}{2k}+|\alpha|} \dd \tau \dd \xi\\
    &\leq \frac{C}{(2\pi)^N} m(B_{a}(x_0)) \int_{\mathcal{C}_1}  e^{-M(c|\xi|)}|\xi|^{\frac{N}{2k}+|\alpha|} \dd \xi\\
                                           &\leq \frac{C \tilde{C}}{(2\pi)^N} m(B_{a}(x_0)) \bigg( \frac{H^{\ell}}{c}\bigg)^{|\alpha|} M_{|\alpha|}.
  \end{align*}
 This concludes the proof of the theorem.
\end{proof}

\subsection{Microlocal regularity for distributions}

Since $\mathcal{D}'$ is embed in $\mathcal{B}$ there is not obstruction to analyze the microlocal regularity of a hyperfunction with respect to $\mathcal D'$ and, again, we can measure this regularity using the generalized FBI transform. The condition $\mathfrak{M}(\mathcal{D}')$ of { \it microlocal regularity in $V\times \Gamma$ with respect to $\mathcal{D}'$} for an analytic functional $\mu$ is defined in the following way: there are an integer $q$ and a constant $C>0$ such that
  \begin{align}\label{DesDist}
  \tag{$\mathfrak{M}(\mathcal{D}')$}
    |\mathcal{F}_{p}\mu(\tau, \xi)|\leq C(1+|\xi|)^{q}, \quad \forall (\tau, \xi) \in V\times \Gamma.
  \end{align} 
  
 \begin{Teo}  Let $u \in \mathcal{B}(\RN)$, $x_0 \in \RN$ and $\xi_0 \in \RN\setminus \{0\}$. Then $(x_0, \xi_0) \notin WF_{\mathcal{D}'}(u)$ if and only if  condition $(\mathfrak{R}_1^{\mathcal{D}'})$ holds at $(x_0, \xi_0)$ if and only if condition $(\mathfrak{R}_2^{\mathcal{D}'})$ holds at $(x_0, \xi_0).$
\end{Teo}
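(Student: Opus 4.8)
The plan is to follow the proof of Theorem~\ref{SmoothTheo} almost verbatim, replacing the rapid decay condition $\mathfrak{M}(\Cinf)$ by the polynomial bound $\mathfrak{M}(\mathcal{D}')$ and using the elementary fact that the generalized FBI transform of a compactly supported distribution grows at most polynomially in $\xi$. Since $(\mathfrak{R}_1^{\mathcal{D}'})$ trivially implies $(\mathfrak{R}_2^{\mathcal{D}'})$, it is enough to prove that $(x_0,\xi_0)\notin WF_{\mathcal{D}'}(u)$ implies $(\mathfrak{R}_1^{\mathcal{D}'})$ and that $(\mathfrak{R}_2^{\mathcal{D}'})$ implies $(x_0,\xi_0)\notin WF_{\mathcal{D}'}(u)$.

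For the first implication I would start from Definition~\ref{alsiu980}: if $(x_0,\xi_0)\notin WF_{\mathcal{D}'}(u)$, there are an open neighborhood $U$ of $x_0$, finitely many open convex cones $\Gamma_1,\dots,\Gamma_\ell$ opposite to $\xi_0$, and $F_j\in\mathcal{O}(\mathcal{W}_\delta(U;\Gamma_j))$ with $v:=u-\sum_{j=1}^{\ell}\bb F_j\in\mathcal{D}'(U)$. Fixing an arbitrary representative $\mu\in\mathcal{O}'(\overline{U})$ of $u$ and an arbitrary neighborhood $V\subset\subset U$ of $x_0$, I would pick $W$ with $V\subset\subset W\subset\subset U$, a cutoff $\varphi\in\Cinf_c(W)$ with $\varphi\equiv1$ on $V$, and a representative $\nu$ of $v$ in $U$. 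By Corollary~\ref{WFSSBVH} together with the invariance of the FBI estimates under change of representative (proved right after Proposition~\ref{Invariance}), there is a conic neighborhood $\Gamma'$ of $\xi_0$ such that $\mathcal{F}_p\mu$ satisfies $\mathfrak{M}(\mathcal{D}')$ on $V\times\Gamma'$ if and only if $\mathcal{F}_p\nu$ does. I would then split $\nu=(\nu-\nu_{\varphi v})+\nu_{\varphi v}$: the functional $\nu-\nu_{\varphi v}$ is carried by $\overline{U}\setminus V$ (discussion after \eqref{asouejri}), so Proposition~\ref{Invariance} makes its FBI transform decay exponentially on $V\times\RN$; and, exactly as in \eqref{bhwekuyg}, $\mathcal{F}_p\nu_{\varphi v}=\mathcal{F}_p(\varphi v)$ is the FBI transform of the compactly supported distribution $\varphi v$. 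Since $\varphi v$ has some finite order $q$ and, for real $\tau$, every $w$-derivative of order $\leq q$ of $e^{i(\tau-w)\xi-|\xi|p(\tau-w)}$ is bounded on $\supp\varphi$ by $C(1+|\xi|)^q$ (here one uses $\Re p\geq0$ on $\RN$), one obtains $|\mathcal{F}_p(\varphi v)(\tau,\xi)|\leq C(1+|\xi|)^q$ on $\RN\times\RN$. Combining the two estimates, $\mu$ satisfies $\mathfrak{M}(\mathcal{D}')$ on $V\times\Gamma'$, which is exactly $(\mathfrak{R}_1^{\mathcal{D}'})$.

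For the converse I would assume $(\mathfrak{R}_2^{\mathcal{D}'})$, so that $|\mathcal{F}_p\mu(\tau,\xi)|\leq C(1+|\xi|)^q$ on some $V\times\Gamma$ for a suitable representative $\mu$ of $u$. By Proposition~\ref{FundamentalRemark} it suffices to prove that $\bb F_1$ is a distribution in a neighborhood of $x_0$, where $F_1$ is the holomorphic function \eqref{Fum} attached to a microlocal decomposition of $\mu$, the cones $\mathcal{C}_j$ and the radius $a$ being chosen so that $\mathcal{C}_1\subset\Gamma$ and $B_a(x_0)\subset V$. The polynomial bound over $\{|\tau-x_0|\leq a\}\times\mathcal{C}_1$ shows that
\[
F_1(x)=\frac{1}{(2\pi)^{N}}\int_{\mathcal{C}_1}\int_{|\tau-x_0|\leq a}e^{i\xi(x-\tau)}\mathcal{F}_p\mu(\tau,\xi)|\xi|^{\frac{N}{2k}}\,\dd\tau\,\dd\xi
\]
is the inverse Fourier transform in $x$ of a function of polynomial growth supported in $\overline{\mathcal{C}_1}$, hence $F_1$ restricts to $\RN$ as a (tempered) distribution; moreover the same formula converges absolutely for $\Im z\in\Gamma_1^c$ and tends to $F_1|_{\RN}$ in $\mathcal{D}'$ as $\Im z\to 0$. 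Then, exactly as in the last step of the proof of Theorem~\ref{SmoothTheo} but with the distributional pairing in place of the pointwise integral, I would deform the contour by Stokes' theorem to check that $\mu_{F_1,W'}-\nu_{F_1,W'}\in\mathcal{O}'(\del W')$, where $\nu_{F_1,W'}$ is the analytic functional associated through $\mathcal{D}'\hookrightarrow\mathcal{B}$ with $F_1|_{\RN}$; this identifies $\bb F_1$ near $x_0$ with the distribution $F_1|_{\RN}$, so Proposition~\ref{FundamentalRemark} yields $(x_0,\xi_0)\notin WF_{\mathcal{D}'}(u)$.

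The main obstacle is the converse direction, and within it the identification of the hyperfunction boundary value $\bb F_1$ with the distributional boundary value of $F_1$ near $x_0$: making sense of \eqref{Fum} as a distribution is routine, but matching the two notions of boundary value requires carefully rerunning the Stokes'-theorem deformation from the proof of Theorem~\ref{SmoothTheo}, now estimating the relevant distributional pairings rather than pointwise integrals.
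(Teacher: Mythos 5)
Your proposal follows the paper's proof essentially verbatim in both directions: the same splitting $\nu_v=(\nu_v-\nu_{\varphi v})+\nu_{\varphi v}$ with Proposition~\ref{Invariance} and the finite-order estimate for the forward implication, and Proposition~\ref{FundamentalRemark} together with the Fourier-transform reading of $F_1$ for the converse. You also correctly flag the actual crux---identifying the hyperfunction $\bb F_1$ with the analytic functional coming from the distribution $F_1|_{\RN}$ via the embedding $\mathcal{D}'\hookrightarrow\mathcal{B}$---which the paper handles by introducing a cutoff $\varphi$ and intermediate functionals $\lambda_1,\lambda_2$ carried by $(W'\setminus U)_\epsilon$ and then applying Stokes; your phrase ``the analytic functional associated through $\mathcal{D}'\hookrightarrow\mathcal{B}$ with $F_1|_{\RN}$'' correctly signals that the cutoff is needed, so there is no gap, only less explicit bookkeeping.
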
  

\begin{proof}
  Assume that $(x_0, \xi_0) \notin WF_{\mathcal{D}'}(u)$ and let us prove the validity of $(\mathfrak{R}_1^{\mathcal{D}'})$. According with Definition~\ref{alsiu980} we can find $\ell$ open convex cones $\Gamma_1, \ldots, \Gamma_\ell\subset \RN\setminus \{0\}$ opposing the direction  $\xi_0$ and functions $F_1, \ldots, F_\ell$, with $F_j \in \mathcal{O}(\mathcal{W}_\delta(U; \Gamma_j))$ and such that $u - \sum_{j=1}^{\ell}\bb F_j$ is a hyperfunction defined by a distribution, which we will denote it by $v$.

  Let $\nu_v$ be the analytic functional associated with $v$ in $U$. According with Corollary~\ref{WFSSBVH}, it will be enough to prove estimate \eqref{DesDist} for the FBI transform of $\nu_v$. Now choose $W$ an open subset of $\RN$ satisfying $V\subset\subset W\subset\subset U$ along with $\varphi\in \Cinf_c(W)$ satisfying $\varphi=1$ in $V.$ We have that $\nu_v- \nu_{\varphi v} \in \mathcal{O}'(\overline{U}\setminus V)$ and
  \begin{align*}
    \mathcal{F}_p \nu_v(\tau, \xi)= \mathcal{F}_p(\nu_v-\nu_{\varphi v})(\tau, \xi)+ \mathcal{F}_p \nu_{\varphi v}(\tau, \xi).
  \end{align*}
Since $\nu_v-\nu_{\varphi v}$ is an analytic functional carried by $\overline{U}\setminus V$ and thanks to Proposition \ref{Invariance} we see that its FBI transform has exponential decay in $V\times\RN$. Since $\varphi v$ has compact support, it has finite order which we will denote it by $q$. Thus we have the following estimate:
  \begin{align}
    |\mathcal{F}_p \nu_{\varphi v}(\tau, \xi)|&= |c_p| \big|v_x\big(\varphi(x) e^{i(\tau-x)\xi- |\xi|p(\tau-x)} \big)\big| \nonumber\\
                                              &\leq |c_p| \sum_{\substack{\alpha \in \ZN_+\\ |\alpha|\leq q}} \sup_{x\in U} \big|\del^{\alpha} \big( \varphi(x) e^{i(\tau-x)\xi- |\xi|p(\tau-x)} \big)\big| \nonumber\\
                                              &\leq |c_p| \sum_{\substack{\alpha \in \ZN_+\\ |\alpha|\leq q}} \sup_{x \in U} \bigg| \sum_{\beta\leq \alpha} {\alpha \choose \beta} \big[\del^{\beta}_x \varphi(x)\big] \del^{\alpha-\beta}_x\big(e^{i(\tau-x)\xi- |\xi|p(\tau-x)} \big)\bigg|\nonumber\\
    &\leq C (1+ |\xi|)^{q},
  \end{align}
as we wished to prove.
  
Next, assume that $(\mathfrak{R}_2^{\mathcal{D}'})$ holds at $(x_0, \xi_0)$ and let us prove that $(x_0, \xi_0)\notin WF_{\mathcal{D}'}(u)$. Again, we will avail ourselves of Proposition~\ref{FundamentalRemark}. Thus, our goal now is to define a distribution that agrees with $\bb F_1$ in a neighborhood of $x_0$. Let $W'$ be a bounded open set in $\RN$ with smooth boundary such that $U\subset\subset W'$.  We claim that
\begin{align*}
     \Cinf_c(\RN)  \ni\phi \mapsto\frac{1}{(2\pi)^{N}} \int_{\RN}\bigg(\int_{\mathcal{C}_1} \int_{|\tau-x_0|\leq a} e^{i\xi(x-\tau)} \mathcal{F}_{p}\mu (\tau, \xi)|\xi|^{\frac{N}{2k}} \dd \tau \dd \xi\bigg) \phi(x) \dd x
  \end{align*}
defines a distribution $v_{F_1}$ in $\RN$. In fact, fix $\phi\in \Cinf_c(\RN)$ and note that
\begin{align*}
  v_{F_1}(\phi) &=\frac{1}{(2\pi)^{N}}\int_{\RN}\bigg(\int_{\mathcal{C}_1} \int_{|\tau-x_0|\leq a} e^{i\xi(x-\tau)} \mathcal{F}_{p}\mu (\tau, \xi)|\xi|^{\frac{N}{2k}} \dd \tau \dd \xi\bigg) \phi(x) \dd x\\
  &=\frac{1}{(2\pi)^{N}}\int_{\mathcal{C}_1} \int_{|\tau-x_0|\leq a} e^{-i\xi\tau} \widehat{\phi}(-\xi)\mathcal{F}_{p}\mu (\tau, \xi)|\xi|^{\frac{N}{2k}} \dd \tau \dd \xi, 
\end{align*}
where $\widehat{\phi}$ stands for the Fourier transform of $\phi$. So $v_{F_1}$ is a linear functional and one can easily check its continuity.

Now choose $\varphi \in \Cinf_c(W')$, $\varphi=1$ in $U$ e define $\nu_{\varphi v_{F_1}}\in \mathcal{O}'(\overline{W'})$ as
\begin{align}
\nu_{\varphi v_{F_1}}(h)= v_{F_1}(\varphi h), \quad \forall h \in \mathcal{O}(\CN).
\end{align}

Let us prove that $\mu_{F_1,W'}$ and $\nu_{\varphi F_1}$ define the same analytic functional in $U$, i.e., given $\epsilon>0$ we need to prove that $\mu_{F_1,W'}- \nu_{\varphi F_1}\in \mathcal{O}'((W'\setminus U)_{\epsilon})$. If we choose $y_1 \in \Gamma_1^{c}$ with $|y_1|< \epsilon$, then $\mu_{F_1, W'}- \mu_{F_1, W'}^{y_1}\in \mathcal{O}'((W'\setminus U)_\epsilon)$. We can write $\mu_{F_{1}, W'}^{y_1}- \nu_{\varphi F_1}= \lambda_1+\lambda_2-\nu_{\varphi F_1}$ where
\begin{align*}
  \lambda_1(h)=  \frac{1}{(2\pi)^{N}}\int_{W'}\int_{\mathcal{C}_1} \int_{|\tau- x_0|\leq a} e^{i\xi(x+iy_1-\tau)} \mathcal{F}_{p} \mu(\tau, \xi) |\xi|^{\frac{N}{2k}} \varphi(x) h(x+iy_1)\dd \tau \dd \xi \dd x
\end{align*}
and
\begin{align*}
  \lambda_2(h)=  \frac{1}{(2\pi)^{N}}\int_{W'\setminus U}\int_{\mathcal{C}_1} \int_{|\tau- x_0|\leq a} e^{i\xi(x+iy_1-\tau)} \mathcal{F}_{p} \mu(\tau, \xi) |\xi|^{\frac{N}{2k}} (1-\varphi(x)) h(x+iy_1)\dd \tau \dd \xi \dd x.
\end{align*}
Note that $\lambda_2$ is carried by $(W'\setminus U)_\epsilon$. Let
\begin{align*}
\mathcal{W}= \{z\in \CN: \Re z \in W', \exists t \in [0,1]: \Im z= t y_1\},
\end{align*}
and consider the smooth $N$-form $\omega(z)= e^{i\xi(z-\tau)}\varphi(\Re z) h(z)\dd z$. Now we apply Stokes' theorem to obtain
\begin{align*}
  (\lambda_1 -\nu_{\varphi F_1})(h)&= \frac{1}{(2 \pi)^{N}}\sum_{j=1}^{N} \int_{\mathcal{C}_1} \int_{|\tau- x_0|\leq a} \int_{\mathcal{W}} e^{i\xi(z-\tau)} \mathcal{F}_{p} \mu(\tau, \xi) |\xi|^{\frac{N}{2k}} \frac{\del\varphi}{\del \z_j}(\Re z) h(z) \dd \z_j\wedge \dd z \dd \tau \dd \xi .
\end{align*}
Since $\varphi= 1$ in $U$, $\lambda_1- \nu_{\varphi F_1}$ is carried by $(W'\setminus U)_\epsilon$.
\end{proof}

\subsection{Microlocal regularity for non-quasianalytic ultradistributions of Roumieu type}

Let $\mathcal{D}'^{\{M\}}$  denote the sheaf of ultradistributions, i.e., given $U$ an open subset of $\RN,$ $\mathcal{D}'^{\{M\}}(U)$  is the topological dual of $\mathcal{D}^{\{M\}}(U)$ as in Definition~\ref{Defultradis}, we recall that $M= (M_k)_{k\in \Z_+}$ is a regular sequence.

The embedding of the sheaf of ultradistributions into the sheaf of hyperfunctions can be done in the same fashion as we did for the sheaf of distributions and the main difference is now $\varphi$ has to be chosen in $\mathcal{D}^{\{M\}}(U)$ (see \cite{Ko} for details).

In this context, we say that a hyperfunction $u$ is microlocally  $\mathcal{D}'^{\{M\}}$-regular and denote its  wave-front set  by  $WF_{\mathcal{D}'^{\{M\}}}(u).$ We say that an analytic functional $\mu$ satisfies the condition $\mathfrak{M}(\mathcal{D}'^{\{M\}})$ { \it of microlocal regularity in $V\times \Gamma$ with respect to $\mathcal{D}'^{\{M\}}$} if, for every $L>0,$ there is  $C=C_L>0$  such that
  \begin{align}\label{DesDualCM}
  \tag{$\mathfrak{M}(\mathcal{D}'^{\{M\}})$}
    |\mathcal{F}_{p}\mu(\tau, \xi)|\leq C e^{M(L|\xi|)}, \quad \forall (\tau, \xi) \in V\times \Gamma
  \end{align} 
where $M(t)$ is the associated function given by \eqref{asssssssociate}.

 \begin{Teo} 
 Let $u \in \mathcal{B}(\RN)$, $x_0 \in \RN$ and $\xi_0 \in \RN\setminus \{0\}$. Then $(x_0, \xi_0)\notin WF_{\mathcal{D}'^{\{M\}}}(u)$  if and only if  condition $(\mathfrak{R}_1^{\mathcal{D}'^{\{M\}}})$  holds at $(x_0, \xi_0)$ if and only if condition $(\mathfrak{R}_2^{\mathcal{D}'^{\{M\}}})$   holds at $(x_0, \xi_0).$
\end{Teo}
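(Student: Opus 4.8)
# Proof Plan for the Ultradistribution Wave-Front Set Characterization

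The plan is to follow, \emph{mutatis mutandis}, the structure already established for the distribution and Denjoy--Carleman cases (Theorems in the preceding subsections), since the logical skeleton is identical and only the quantitative estimate $\mathfrak{M}(\mathcal{D}'^{\{M\}})$ changes. Concretely, the statement ``$(\mathfrak{R}_1^{\mathcal{D}'^{\{M\}}}) \Rightarrow (\mathfrak{R}_2^{\mathcal{D}'^{\{M\}}})$'' is formal and free. It remains to prove the two substantive implications: (a) $(x_0,\xi_0)\notin WF_{\mathcal{D}'^{\{M\}}}(u) \Rightarrow (\mathfrak{R}_1^{\mathcal{D}'^{\{M\}}})$, and (b) $(\mathfrak{R}_2^{\mathcal{D}'^{\{M\}}}) \Rightarrow (x_0,\xi_0)\notin WF_{\mathcal{D}'^{\{M\}}}(u)$.

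For implication (a), I would repeat the opening of the proof of Theorem~\ref{DC-QA-NQA} verbatim: by Definition~\ref{alsiu980} write $u-\sum_{j=1}^\ell \bb F_j = v$ with $F_j\in\mathcal{O}(\mathcal{W}_\delta(U;\Gamma_j))$, $\Gamma_j$ opposite to $\xi_0$, and $v\in\mathcal{D}'^{\{M\}}(U)$ an ultradistribution. By Corollary~\ref{WFSSBVH} the boundary value terms contribute an exponentially decaying FBI transform on some $V\times\Gamma'$, so it suffices to estimate $\mathcal{F}_p\nu_v$ where $\nu_v$ represents $v$. Choosing $V\subset\subset W\subset\subset U$ and now $\varphi\in\mathcal{D}^{\{M\}}(W)$ with $\varphi\equiv 1$ on $V$ (this is the one place where the ultradistribution embedding of \cite{Ko} is used, and where non-quasianalyticity is needed so that such a cutoff exists), split $\mathcal{F}_p\nu_v = \mathcal{F}_p(\nu_v-\nu_{\varphi v}) + \mathcal{F}_p\nu_{\varphi v}$. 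The first term decays exponentially by Proposition~\ref{Invariance}, and $\mathcal{F}_p\nu_{\varphi v} = \mathcal{F}_p(\varphi v)$ is, by the computation \eqref{bhwekuyg}, the FBI--BH transform of a compactly supported ultradistribution in the sense of Hoepfner--Medrado. One then invokes the characterization of the Denjoy--Carleman wave-front set of ultradistributions from \cite{HM} to conclude estimate $\mathfrak{M}(\mathcal{D}'^{\{M\}})$ on a suitable $V\times\Gamma$.

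For implication (b), I would again mirror the smooth and distributional arguments: by Proposition~\ref{FundamentalRemark} it is enough to produce an ultradistribution agreeing with $\bb F_1$ near $x_0$, where
\begin{align*}
 F_1(x) = \frac{1}{(2\pi)^N}\int_{\mathcal{C}_1}\int_{|\tau-x_0|\leq a} e^{i\xi(x-\tau)}\,\mathcal{F}_p\mu(\tau,\xi)\,|\xi|^{\frac{N}{2k}}\,\dd\tau\,\dd\xi.
\end{align*}
Testing against $\phi\in\mathcal{D}^{\{M\}}(\RN)$ and using Fubini plus the Fourier transform as in the distributional case, $v_{F_1}(\phi)$ reduces to an integral in which $\widehat\phi(-\xi)$ appears; the defining estimate $|\widehat\phi(\xi)|\leq C_L e^{-M(L|\xi|)}$ of an ultradifferentiable test function, combined with the hypothesis $|\mathcal{F}_p\mu(\tau,\xi)|\leq C e^{M(L'|\xi|)}$ for every $L'$, makes the integral converge absolutely once $L$ is chosen large relative to $L'$ (this is the analogue of the order-$q$ bound used for distributions). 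Continuity of $v_{F_1}$ on $\mathcal{D}^{\{M\}}(\RN)$ follows from the same estimate. Finally, one checks $\mu_{F_1,W'}-\nu_{\varphi v_{F_1}}\in\mathcal{O}'(\del W')$ by the Stokes'-theorem contour-shift argument already carried out in the distributional proof, now with $\varphi\in\mathcal{D}^{\{M\}}$; the smooth $N$-form $\omega(z)=e^{i\xi(z-\tau)}\varphi(\Re z)h(z)\dd z$ has $\bar\partial\omega$ supported where $\del\varphi\neq 0$, hence away from $U$, yielding an analytic functional carried in $(W'\setminus U)_\epsilon$.

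The main obstacle I anticipate is purely bookkeeping rather than conceptual: making the cutoff $\varphi\in\mathcal{D}^{\{M\}}$ interact correctly with the two quantifier structures. In direction (a) one must be careful that the constants produced by \cite{HM} for $\mathcal{F}_p(\varphi v)$ are of the exact form $C e^{M(L|\xi|)}$ with \emph{some} $L$, matching $\mathfrak{M}(\mathcal{D}'^{\{M\}})$'s ``for every $L$ there is $C_L$''—here one should recall that $WF_{\mathcal{D}'^{\{M\}}}$ complement is characterized by the \emph{existence} of one such bound, so the quantifiers are compatible. In direction (b), the delicate point is the choice of $L$ in the test-function estimate relative to the $L'$ from the hypothesis on $\mathcal{F}_p\mu$: since the hypothesis gives the bound for \emph{every} $L'$, and convergence of $\int e^{M(L'|\xi|)-M(L|\xi|)}|\xi|^{N/2k}\,\dd\xi$ requires $L$ sufficiently larger than $L'$, one simply fixes $L'$ first (say $L'=1$) and then picks $L$; the regularity/logarithmic-convexity properties of $M$ collected in Definition~\ref{regseqdefinition} guarantee $M(L|\xi|)-M(|\xi|)\gtrsim |\xi|$ for $L$ large, which kills the polynomial factor. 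Once these quantifier matchings are made explicit, the rest is a line-by-line transcription of the already-established proofs.
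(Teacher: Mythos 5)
Your overall structure (split the analytic functional, handle the boundary values via Corollary~\ref{WFSSBVH}, reduce the forward implication to a bound on the FBI transform of a compactly supported ultradistribution, and reduce the converse to showing $\bb F_1$ is an ultradistribution via Proposition~\ref{FundamentalRemark}) matches the paper's. There are, however, two substantive problems in the details, both concentrated in the converse direction (b), and one notable divergence in (a).

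For (a), the paper does \emph{not} invoke \cite{HM}: instead it estimates $\mathcal{F}_p\nu_v$ directly from the ultradistribution continuity bound \eqref{ultradistributioncompactDefinition} together with the Fa\`a di Bruno formula applied to $e^{\kappa(\xi,\tau,x)}$, where $\kappa(\xi,\tau,x)=i(\tau-x)\xi-|\xi|p(\tau-x)$. The point of doing this by hand is that it transparently delivers the bound $C e^{M(L|\xi|)}$ \emph{for every} $L>0$, with $C=C_L$, by choosing the $\epsilon$ in the ultradistribution estimate small depending on $L$. Your remark that ``the $WF_{\mathcal{D}'^{\{M\}}}$ complement is characterized by the \emph{existence} of one such bound'' misreads condition $\mathfrak{M}(\mathcal{D}'^{\{M\}})$, which has a genuine ``for every $L>0$'' quantifier; if you want to cite \cite{HM} you must verify that their characterization produces exactly this universal quantifier, which is precisely the point the direct computation avoids having to check.

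For (b), there is a concrete error. You assert that the regularity hypotheses on $M$ ``guarantee $M(L|\xi|)-M(|\xi|)\gtrsim|\xi|$ for $L$ large, which kills the polynomial factor.'' This is false. Condition \eqref{M4}, namely $(M_k/k!)^{1/k}\to\infty$, forces the associated function to satisfy $M(t)=o(t)$ as $t\to\infty$ (e.g.\ for the Gevrey sequence $M_k=(k!)^s$ with $s>1$ one has $M(t)\sim c\, t^{1/s}$), so $M(Lt)-M(t)=o(t)$ and no linear lower bound can hold. The paper sidesteps this entirely by inserting a power $\Delta^\ell$ and writing $\widehat{\phi}(-\xi)=\pm|\xi|^{-2\ell}\widehat{\Delta^\ell\phi}(-\xi)$, which produces the integrable factor $|\xi|^{N/2k-2\ell}$ after applying \eqref{FourierContinuityinDC} and \eqref{DesLapDCEst}; the exponential factors are then \emph{matched} rather than asked to absorb a polynomial. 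This Laplacian device is the analogue, for ultradistributions, of the order-$q$ bound you correctly use for distributions, and you should make it explicit rather than appeal to an unavailable growth of $M$.

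The quantifier bookkeeping in (b) is also reversed. For a Roumieu test function $\phi\in\mathcal{D}^{\{M\}}$, the decay constant in $|\widehat{\phi}(\xi)|\leq C e^{-M(L|\xi|)}$ is \emph{determined by} $\phi$ (it is $L=(\sqrt{N}r)^{-1}$ where $r$ is any radius with $\|\phi\|_{r,\overline{U}}<\infty$), and it cannot be ``picked large.'' What is free is the constant $L'$ in the hypothesis $|\mathcal{F}_p\mu(\tau,\xi)|\leq C_{L'}e^{M(L'|\xi|)}$, which by $\mathfrak{M}(\mathcal{D}'^{\{M\}})$ holds for every $L'>0$. The correct logic is therefore: $\phi$ fixes $L$; then choose $L'$ small enough (depending on $\phi$, $\ell$, and $H$) to make the exponential factors balance. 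Writing it with $L'$ fixed and $L$ chosen, as you do, would not even make sense given the one-sided quantifier on the test-function side. Once these two points are corrected, your plan for the Stokes'-theorem identification of $\mu_{F_1,W'}$ with $\nu_{\varphi v_{F_1}}$ is fine and matches the paper.
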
  
\begin{proof}
  Assume that $(x_0, \xi_0) \notin WF_{\mathcal{D}'^{\{M\}}}(u)$ and let us verify the validity of $(\mathfrak{R}_1^{\mathcal{D}'^{\{M\}}})$ at $(x_0, \xi_0)$. The procedure described in the distributions case also applies here and therefore it is enough to prove 
that estimate \eqref{DesDualCM}  holds for $\mathcal{F}_p \nu_{ v}(\tau, \xi)$ where $v\in \mathcal{E}'^{\{M\}}(U)$, i.e., $v$ is an ultradistribution of class $\{M\}$ with compact support in $U$. Using \eqref{ultradistributioncompactDefinition}, we have
  \begin{align*}
    |\mathcal{F}_p \nu_{ v}(\tau, \xi)|&= |c_p| \big|v_x\big( e^{i(\tau-x)\xi- |\xi|p(\tau-x)} \big)\big|\\
                                              &\leq |c_p| \sum_{\alpha \in \ZN_+} C_{\epsilon} \frac{\epsilon^{|\alpha|}}{M_\alpha} \sup_{x\in U} \big|\del^{\alpha}_x \big( e^{i(\tau-x)\xi- |\xi|p(\tau-x)} \big)\big|.
  \end{align*}

 Let $\kappa(\xi, \tau, x)= i(\tau-x)\xi- |\xi|p(\tau-x)$. Note that $\del_x^{\alpha} \kappa=0$ if $|\alpha|>2k,$ since $\kappa$ is homogeneous of order $1$ in $\xi$ there is a constant $C>0$ such that all derivatives of $\kappa$ are uniformly bounded by $|\xi|C$, when $x\in\overline U$ and $\tau\in\overline V$. If $|\xi|\geq 1$, it follows from the Fa\`a di Bruno formula (see e.g. \cite{bierstonemilman}) that
  \begin{align*}
    |\del^{\alpha}_x e^{\kappa(\xi,\tau, x)}|&= \Big|\sum_{j=1}^{|\alpha|} \frac{e^{\kappa}}{j!} \sum_{\substack{\gamma_1+ \cdots+ \gamma_j= \alpha \\|\gamma_k|\geq 1}} { \alpha \choose \gamma_1, \ldots, \gamma_j} (\del^{\gamma_1}_x \kappa)\ldots (\del^{\gamma_j}_x \kappa)\Big|\\
                                          &\leq \sum_{j=1}^{|\alpha|} \frac{1}{j!} \sum_{\substack{\gamma_1+ \cdots+ \gamma_j= \alpha\\ |\gamma_k|\geq 1}} 2^{|\alpha|} |\del^{\gamma_1}_x \kappa|\ldots |\del^{\gamma_j}_x \kappa|\\
                                            &\leq |\xi|^{|\alpha|} e^{C} 2^{|\alpha|(N+2)}.                                           
  \end{align*}

Therefore, given $L>0$ choose $\epsilon>0$ such that $\epsilon 2^{N+2}/L < 1/2$ then it follows that
\begin{align*}
    |\mathcal{F}_p \nu_{ v}(\tau, \xi)|
&\leq  |c_p| e^{C} C_\epsilon  \sum_{\alpha \in \ZN_+}\bigg(\frac{\epsilon2^{(N+2)}}{L}\bigg)^{|\alpha|} \frac{(L|\xi|)^{|\alpha|}}{M_{|\alpha|}}  \\
 &\leq  |c_p| e^{C} C_\epsilon   e^{M(L|\xi|)} 
  \end{align*}
as we wished to prove.

For the other implication, assume that  $(\mathfrak{R}_2^{\mathcal{D}'^{\{M\}}})$ is valid at $(x_0, \xi_0)$ and we shall prove that $(x_0, \xi_0) \notin WF_{\mathcal{D}'^{\{M\}}}(u)$. Now we will prove that $\bb F_1$ agrees with an ultradistribution in a neighborhood of $x_0$ and so the theorem will be consequence of Proposition~\ref{FundamentalRemark}.

Define the ultradistribution  $v_{F_1}\in \mathcal{D}'^{\{M\}}(\RN)$  by
  \begin{align*}
     \mathcal{D}^{\{M\}}(\RN)  \ni\phi \mapsto\frac{1}{(2\pi)^{N}} \int_{\RN}\bigg(\int_{\mathcal{C}_1} \int_{|\tau-x_0|\leq a} e^{i\xi(x-\tau)-\epsilon|\xi|^{2}} \mathcal{F}_{p}\mu (\tau, \xi)|\xi|^{\frac{N}{2k}} \dd \tau \dd \xi\bigg) \phi(x) \dd x.
  \end{align*}
 In fact,  choose $\ell\in \Z_+$ such that $|\xi|^{\frac{N}{2k}- 2\ell}$ is integrable in $\RN$ and 
fix $\phi\in \mathcal{D}^{\{M\}}(\RN)$. Therefore, it follows that
\begin{align*}
  v_{F_1}(\phi) &=\frac{1}{(2\pi)^{N}}\int_{\RN}\bigg(\int_{\mathcal{C}_1} \int_{|\tau-x_0|\leq a} e^{i\xi(x-\tau)} \mathcal{F}_{p}\mu (\tau, \xi)|\xi|^{\frac{N}{2k}} \dd \tau \dd \xi\bigg) \phi(x) \dd x\\
  &=\frac{1}{(2\pi)^{N}}\int_{\mathcal{C}_1} \int_{|\tau-x_0|\leq a} e^{-i\xi\tau} \widehat{\phi}(-\xi)\mathcal{F}_{p}\mu (\tau, \xi)|\xi|^{\frac{N}{2k}} \dd \tau \dd \xi\\
&=\frac{1}{(2\pi)^{N}}\int_{\mathcal{C}_1} \int_{|\tau-x_0|\leq a} e^{-i\xi\tau} \widehat{\Delta_{x}^\ell\phi}(-\xi)\mathcal{F}_{p}\mu (\tau, \xi)|\xi|^{\frac{N}{2k}-2 \ell} \dd \tau \dd \xi. 
\end{align*}
Since $\phi$ has compact support there is $r>0$ such that $\|\phi\|_{r, \overline{U}}<\infty$  and thus we can use \eqref{DesLapDCEst}, \eqref{FourierContinuityinDC} and \eqref{DesDualCM} with $L= (\sqrt{N}r)^{-1}$  to obtain
\begin{align*}
  |v_{F_1}(\phi)|&\leq \frac{m(B_a(x_0))}{(2\pi)^{N}}\int_{\mathcal{C}_1}  | \widehat{\Delta_{x}^\ell\phi}(-\xi)||\mathcal{F}_{p}\mu (\tau, \xi)||\xi|^{\frac{N}{2k}-2 \ell}  \dd \xi\\
&\leq \frac{C}{(2\pi)^{N}}m(B_a(x_0)) m(U) \|\Delta^\ell \phi\|_{H^{2\ell}r, \overline{U}}\int_{\mathcal{C}_1}    |\xi|^{\frac{N}{2k}-2 \ell}  \dd \xi\\
&\leq  \tilde{C}\|\phi\|_{r, \overline{U}},
\end{align*}
where
\begin{align*}
  \tilde{C}= \frac{C}{(2\pi)^{N}}m(B_a(x_0)) m(U) r^{2\ell}H^{\ell(2\ell+1)} A^{2 \ell}N\int_{\mathcal{C}_1}    |\xi|^{\frac{N}{2k}-2 \ell}  \dd \xi.
\end{align*}

To finish the proof one can repeat the same procedure as in the distribution case. 
\end{proof}

\section{Elliptic regularity}\label{secEllipticReg}

Let $\Omega\subset \RN$ be an open subset and let $P$ be a linear differential operator of order $m$ and real-analytic coefficients in $\Omega$. 

Let us denote by $P_m$ the principal symbol of $P$ and consider the characteristic set of $P$, i.e,
\begin{align}\label{charP}
  \Char P= \{(x, \xi) \in T^\ast \Omega: P_m(x,\xi)= 0\}.
\end{align}

The main result of this section  is  the elliptic regularity theorem for some subsheaves of the sheaf of hyperfunctions. 

\begin{Teo}\label{Cor73}
  Let $\mathcal{S}$ one of the following sheaves: $C^{\omega}, \Cinf, \mathcal{E}^{\{M\}}, \mathcal D'$. For every $u \in \mathcal{B}(\Omega)$ and every linear differential operator $P$ of order $m$ with real-analytic coefficients in $\Omega$ it follows that
  \begin{align*}
    WF_{\mathcal{S}}(u) \subset WF_{\mathcal{S}}(Pu) \cap \Char P.
  \end{align*}
\end{Teo}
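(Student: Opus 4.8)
We prove the pointwise statement: fix $(x_0,\xi_0)\in T^\ast\Omega$ with $(x_0,\xi_0)\notin WF_{\mathcal S}(Pu)$ and $P_m(x_0,\xi_0)\neq 0$, and show $(x_0,\xi_0)\notin WF_{\mathcal S}(u)$. When $\mathcal S=C^\omega$ this is Sato's theorem, and the argument below is a rewriting of the proof of \cite[Theorem 9.5.1]{Hormander1}; the point is to organise that scheme so that it is compatible with the microlocal decomposition of Definition~\ref{MLdecom}, hence uniform in $\mathcal S$.

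Replacing $u$ by a bounded representative near $x_0$ and shrinking, let $\mu\in\mathcal O'(\overline U)$ be a representative of $u$ in a small neighbourhood $U$ of $x_0$ and let
\[
\mu_0=\mu_{F_1,W'}+\sum_{j=2}^{L}\big(\mu_{f_j,W'}+\mu_{R_j,W'}\big)
\]
be a microlocal decomposition of $\mu_0$ at $(x_0,\xi_0)$ as in Definition~\ref{MLdecom}, with $\mathcal C_1\ni\xi_0$. By the construction preceding Definition~\ref{MLdecom} together with Proposition~\ref{Claim1}, near $x_0$ the hyperfunction $u$ equals $\bb F_1$ plus a hyperfunction which is microlocally $C^\omega$-regular at $(x_0,\xi_0)$. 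Hence, by Proposition~\ref{FundamentalRemark}, it suffices to show that $\bb F_1\in\mathcal S(W'')$ for some neighbourhood $W''$ of $x_0$. Inspecting the explicit formula \eqref{Fum} for $F_1$ and the way the $\mathcal S$-extension of $F_1$ across $x_0$ is produced in the proofs of Theorems~\ref{SmoothTheo} and \ref{DC-QA-NQA} and in the distribution case (always directly out of an $\mathfrak M(\mathcal S)$-type bound for $\mathcal F_p\mu$ over the cone $\mathcal C_1$), it is in fact enough to establish that $\mathcal F_p\mu$ satisfies $\mathfrak M(\mathcal S)$ on $V\times\mathcal C_1$ for a small neighbourhood $V$ of $x_0$.

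Since $P$ has real-analytic coefficients it acts on $\mathcal O'(\overline U)$ by transposition, and $\nu:=P\mu$ represents $Pu$. As $(x_0,\xi_0)\notin WF_{\mathcal S}(Pu)$, the characterisation theorem for $\mathcal S$ (Theorem~\ref{Theo2.4}, \ref{SmoothTheo}, \ref{DC-QA-NQA}, or the distribution theorem, according to $\mathcal S$) gives condition $(\mathfrak R_1^{\mathcal S})$, hence a conic neighbourhood $\Gamma$ of $\xi_0$ such that $\mathcal F_p\nu$ satisfies $\mathfrak M(\mathcal S)$ on $V\times\Gamma$; shrink $\mathcal C_1$ so that $\overline{\mathcal C_1}\setminus\{0\}\subset\Gamma$, and shrink $V$ and $\mathcal C_1$ further so that $|P_m(x,\xi)|\ge c|\xi|^{m}$ for $x\in\overline V$ and $\xi\in\mathcal C_1$, using $P_m(x_0,\xi_0)\neq 0$. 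By the analytic pseudodifferential calculus adapted to the generalized FBI transform developed in Appendix~\ref{Appendix3} — a customisation of Boutet de Monvel and Kr\'ee's theory \cite{Boutet} to the microlocal decomposition — there is an analytic pseudodifferential operator $E$ of order $-m$, microlocalised near $(x_0,\xi_0)$ and properly supported near $x_0$, such that $EP=I-R$ with $R$ analytic-regularising near $(x_0,\xi_0)$; the same calculus yields, on $V\times\mathcal C_1$,
\[
\mathcal F_p\mu=\mathcal F_p\big(E\nu\big)+\mathcal F_p\big(R\mu\big),
\]
where $\mathcal F_p(R\mu)$ decays exponentially there, and where $\mathcal F_p(E\nu)$ satisfies $\mathfrak M(\mathcal S)$ on $V\times\mathcal C_1$ because $\mathcal F_p\nu$ does, $E$ has order $-m$, and every one of the classes $\mathfrak M(\mathcal S)$ in question is stable under multiplication by a symbol of order $-m$ (plus an analytically negligible error). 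Therefore $\mathcal F_p\mu$ satisfies $\mathfrak M(\mathcal S)$ on $V\times\mathcal C_1$, which by the previous paragraph gives $\bb F_1\in\mathcal S$ near $x_0$ and hence $(x_0,\xi_0)\notin WF_{\mathcal S}(u)$.

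The entire difficulty is concentrated in the calculus of Appendix~\ref{Appendix3}: one must transport the classical construction of the elliptic parametrix, with its analytic symbol estimates and its infinitely regularising remainder, through the non-classical FBI transform $\mathcal F_p$ (phase $e^{i(\tau-w)\xi-|\xi|p(\tau-w)}$) and through the microlocal decomposition, so that applying $E$ to a representative translates into the displayed multiplicative relation between $\mathcal F_p\mu$ and $\mathcal F_p\nu$ with exponentially decaying error. The fact that one works with analytic functionals rather than compactly supported distributions, and that the phase is only $1$-homogeneous in $\xi$ while $P_m$ is $m$-homogeneous, are the sources of the technical subtleties. Once that calculus is available, the reduction above is independent of $\mathcal S$; in particular it shows that, for any $\mathcal S$ with $C^\omega\subseteq\mathcal S\subseteq\mathcal D'$, the elliptic regularity theorem for hyperfunctions with respect to $\mathcal S$ holds if and only if it holds for distributions, which is Corollary~\ref{Cor73???}.
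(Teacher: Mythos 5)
Your proposal shares the paper's opening moves --- fixing a covector off $WF_{\mathcal S}(Pu)\cup\Char P$, passing to a representative $\mu$, invoking the microlocal decomposition of Definition~\ref{MLdecom} together with Proposition~\ref{FundamentalRemark} so as to reduce the problem to the single boundary value $\bb F_1$, and recognising that by the characterisation theorems it suffices to verify $\mathfrak M(\mathcal S)$ for $\mathcal F_p\mu$ on $V\times\mathcal C_1$. But from there you diverge from the paper, and the divergence conceals a genuine gap.

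Your decisive step is the assertion that, with $E$ a microlocal analytic parametrix of order $-m$ and $EP=I-R$, one has $\mathcal F_p\mu = \mathcal F_p(E\nu)+\mathcal F_p(R\mu)$ with $\mathcal F_p(R\mu)$ exponentially small and $\mathcal F_p(E\nu)$ satisfying $\mathfrak M(\mathcal S)$ ``because $\mathcal F_p\nu$ does, $E$ has order $-m$, and $\mathfrak M(\mathcal S)$ is stable under multiplication by a symbol of order $-m$.'' Neither claim is established, and neither is what Appendix~\ref{Appendix3} proves. The FBI transform of $E\nu$ is not $\mathcal F_p\nu$ multiplied by a symbol: it is the composition of a non-local operator, acting on an analytic functional, with the $p$-phase FBI kernel, and the statement that this composition reduces to symbolic multiplication up to an exponentially small error --- strong enough to also preserve the weaker bounds $\mathfrak M(\Cinf)$ and $\mathfrak M(\mathcal D')$ --- is precisely the technical crux you are treating as a black box. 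Appendix~\ref{Appendix3} does something structurally different: it works with the concrete integral representations \eqref{DefinitionOfG}, letting the parametrix symbol $a(z,\xi)$ multiply the \emph{integrand} of the formula defining $H_1$ so as to produce explicit holomorphic functions $E_{H_1}$, $S_{H_1}$ with $P\,\bb E_{H_1}=\bb H_1+\bb S_{H_1}$, $\bb E_{H_1}\in\mathcal D'(U)$ and $\bb S_{H_1}\in C^\omega(U)$. At no point does it assert or use a commutation relation between $\mathcal F_p$ and analytic pseudodifferential operators applied to $\nu$.

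The paper's actual argument is two-stage and avoids any such FBI pseudo-locality. Theorem~\ref{TeoRegElip} handles $\mathcal S=\mathcal D'$: from $P\,\bb F_1-\bb H_1=\sum_{j\ge 2}(\bb H_j-P\,\bb F_j)$ and the Appendix one gets $P(\bb F_1-\bb E_{H_1})$ equal to a sum of boundary values in opposing cones plus a real-analytic function, and then \cite[Theorem 9.3.7]{Hormander1}, \cite[Corollary 9.4.9]{Hormander1} and \cite[Theorem 9.3.3]{Hormander1} are used to produce a holomorphic extension of $F_1-E_{H_1}-\sum_j U_j$, forcing $\bb F_1\in\mathcal D'$ near $x_0$. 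Only then, in Subsection~\ref{conlcusionP}, is the general $\mathcal S$ treated: having placed $\bb F_1$ in $\mathcal D'$, one applies the \emph{classical distribution} elliptic regularity theorem with respect to $\mathcal S$ to $\bb F_1$, and observes that $P\,\bb F_j$ for $j\ge 2$ remains a boundary value in a cone opposing $\xi_0$, so $(x_0,\xi_0)\notin WF_{\mathcal S}(P\,\bb F_1)$. This is also why your closing remark about Corollary~\ref{Cor73???} does not follow from your argument: a uniform one-shot parametrix transfer, if it worked, would prove the hyperfunction theorem for each $\mathcal S$ outright and would not exhibit the conditional equivalence with the distribution statement that the corollary actually asserts. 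The corollary is a byproduct of the paper's two-stage structure, in which the distribution elliptic regularity theorem for $\mathcal S$ is the explicit input. To make your proposal work you would need to state and prove a pseudo-locality theorem for the generalized FBI transform applied to analytic functionals; that would be interesting, but it is not in the paper and it is not trivial.
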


In the next lines we will  use the microlocal decomposition, Definition~\ref{MLdecom}, to proof  the case $\mathcal S=\mathcal D'$, Theorem~\ref{TeoRegElip}. Then we conclude the proof in Subsection~\ref{conlcusionP} as a consequence of Theorem~\ref{TeoRegElip}.

Fix $u \in \mathcal{B}(\Omega)$ and assume that $(x_0, \xi_0)\notin WF_{\mathcal{S}}(Pu) \cup \Char P$ where $P$ is a linear differential operator of order $m$ and real-analytic coefficients and $\mathcal{S}$ is a subsheaf of $\mathcal{B}$ that has $C^{\omega}$ as a subsheaf.
Our goal is to prove that $(x_0, \xi_0)\notin WF_{\mathcal{S}}(u).$ 

Let $U$ be a bounded open neighborhood of $x_0$ in $\Omega$ and assume that $\mu, \nu\in \mathcal{O}'(\overline{U})$ are representatives of $u$ and $Pu$  in $U$.

Using the microlocal decomposition of $\mu$ and $\nu$ we can find cones $\Gamma_1, \ldots, \Gamma_\ell$ such that $\xi_0 \in \Gamma_1$, $\xi_0 \cdot\Gamma_j<0$ if $j=2, \ldots, \ell$; and functions $F_j \in \mathcal{O}(\mathcal{W}_\delta(U; \Gamma_j))$, $H_j \in \mathcal{O}(\mathcal{W}_\delta(U;\Gamma_j))$, $1\le j\le \ell$, such that
  \begin{align*}
    u= \sum_{j=1}^{\ell} \bb F_j \quad \textrm{ and } \quad  P(x,D)u= \sum_{j=1}^{\ell} \bb H_j.
  \end{align*}

We also know that
\begin{align*}
  F_1(z)= \frac{1}{(2\pi)^{N}} \int_{|\tau-x_0|\leq  a} \int_{\mathcal{C}_1} e^{i\xi(z-\tau)} \mathcal{F}_p \mu (\tau, \xi) |\xi|^{\frac{N}{2k}} \dd \xi \dd \tau
\end{align*}
and
\begin{align*}
    H_1(z)= \frac{1}{(2\pi)^{N}} \int_{|\tau-x_0|\leq  a} \int_{\mathcal{C}_1} e^{i\xi(z-\tau)} \mathcal{F}_p \nu (\tau, \xi) |\xi|^{\frac{N}{2k}} \dd \xi \dd \tau
\end{align*}
 Note that $(x_0, \xi_0) \notin WF_{\mathcal{S}}(u)$ if and only if $(x_0, \xi_0) \notin WF_{\mathcal{S}}(\bb F_1)$.

The next result is a particular case of Theorem~\ref{Cor73} when $\mathcal S=\mathcal D'$.

\begin{Teo} \label{TeoRegElip}
  For every $u \in \mathcal{B}(\Omega)$ and every linear differential operator $P$ of order $m$ with real-analytic coefficients in $\Omega$ it follows that
  \begin{align*}
    WF_{\mathcal{\mathcal{D}'}}(u) \subset WF_{\mathcal{D}'}(P u) \cup \Char P.
  \end{align*}
\end{Teo}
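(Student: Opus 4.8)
The plan is to mimic the proof of Sato's theorem in \cite[Theorem~9.5.1]{Hormander1} (itself a reworking of \cite{BonySchapira}), carrying the weaker $\mathcal{D}'$–type regularity through the argument and organising the reduction via the microlocal decomposition, Definition~\ref{MLdecom}. Fix $(x_0,\xi_0)\notin WF_{\mathcal{D}'}(Pu)\cup\Char P$; the goal is $(x_0,\xi_0)\notin WF_{\mathcal{D}'}(u)$. Keeping the notation set up just before the statement, let $U$ be a small bounded neighborhood of $x_0$, let $\mu\in\mathcal{O}'(\overline U)$ represent $u$, let $\nu=P\mu\in\mathcal{O}'(\overline U)$ be the natural action of $P$ on analytic functionals (dual to its action on germs, $\langle P\mu,h\rangle=\langle\mu,{}^{t}P h\rangle$), so that $\nu$ represents $Pu$ in $U$, and take the microlocal decompositions $u=\sum_j\bb F_j$ and $Pu=\sum_j\bb H_j$, with $F_1$ as in \eqref{Fum} and $H_1$ built from $\mathcal{F}_p\nu$ in the same way. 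By Proposition~\ref{FundamentalRemark} it suffices to show that $\bb F_1$ agrees with a distribution near $x_0$, and (exactly as in the proof of the distribution version of Theorem~\ref{Theo2.4} above) this holds once we have a polynomial bound $|\mathcal{F}_p\mu(\tau,\xi)|\le C(1+|\xi|)^{q}$ for $\tau$ near $x_0$ and $\xi\in\mathcal{C}_1$. On the other hand, $\bb H_j$ for $j\ge 2$ is a finite sum of boundary values in cones opposing $\xi_0$, so Proposition~\ref{FBIBHBV} and Corollary~\ref{WFSSBVH} give $(x_0,\xi_0)\notin WF_{\mathcal{D}'}(\bb H_j)$; since $\bb H_1=Pu-\sum_{j\ge2}\bb H_j$ and $(x_0,\xi_0)\notin WF_{\mathcal{D}'}(Pu)$, we get $(x_0,\xi_0)\notin WF_{\mathcal{D}'}(\bb H_1)$, i.e.\ $\mathcal{F}_p\nu=\mathcal{F}_p(P\mu)$ satisfies an estimate $\mathfrak{M}(\mathcal{D}')$ on a conic neighborhood of $(x_0,\xi_0)$.

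The heart of the proof is to convert this estimate on $\mathcal{F}_p(P\mu)$ into one on $\mathcal{F}_p\mu$ using ellipticity. Writing $\mathcal{F}_p(P\mu)(\tau,\xi)=\mu_w\!\big({}^{t}P_w[\,c_p\,e^{i(\tau-w)\xi-|\xi|p(\tau-w)}\,]\big)$ and differentiating the weight in $w$ — noting that, as $p$ is homogeneous of degree $2k$, $D_w$ acts on the weight like multiplication by $-\xi$ up to terms vanishing on the diagonal $\tau=w$ — one sees that ${}^{t}P_w$ turns the weight into itself times a polynomial in $\xi,|\xi|$ whose top-order part is $P_m(w,\xi)$ modulo lower order in $|\xi|$; by $P_m(x_0,\xi_0)\ne 0$ this ``symbol'' is invertible on a conic neighborhood of $(x_0,\xi_0)$. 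Turning the formal inverse into an honest operator at the level of FBI transforms is precisely where the theory of analytic pseudodifferential operators of \cite{Boutet}, adapted to the present framework in Appendix~\ref{Appendix3}, is used: it produces an analytic pseudodifferential operator $E$, microlocally elliptic at $(x_0,\xi_0)$, with $EP=I+R$ where $R$ is microlocally real-analytic regularizing at $(x_0,\xi_0)$, and — this is the point of the customization — such that $E$ does not enlarge the $\mathcal{D}'$–wave-front set, i.e.\ an $\mathfrak{M}(\mathcal{D}')$ bound for $\mathcal{F}_p(P\mu)$ forces an $\mathfrak{M}(\mathcal{D}')$ bound for $\mathcal{F}_p(EP\mu)$ on a slightly smaller conic neighborhood. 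Then $\mu=EP\mu-R\mu$: the term $R\mu$ is microlocally real-analytic at $(x_0,\xi_0)$, hence invisible to $WF_{\mathcal{D}'}$ there by Corollary~\ref{WFSSBVH}, while $EP\mu$ has FBI transform satisfying $\mathfrak{M}(\mathcal{D}')$ near $(x_0,\xi_0)$; since the a priori bound \eqref{SEMPREVALE} keeps all the relevant integrals finite throughout, we conclude that $\mathcal{F}_p\mu$ satisfies the required polynomial estimate near $(x_0,\xi_0)$, and therefore $(x_0,\xi_0)\notin WF_{\mathcal{D}'}(u)$.

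The main obstacle is everything in the second paragraph. Constructing $E$ and proving the composition identity $EP=I+R$ requires convergent (not merely asymptotic) symbol expansions, which is exactly where the real-analyticity of the coefficients of $P$ is used and where Boutet de Monvel's symbol estimates enter; and verifying that $E$ transports the polynomial growth of the FBI transform without loss beyond a fixed power of $|\xi|$, uniformly in cones, is the genuinely new ingredient needed to work with $\mathcal{D}'$ rather than $C^{\omega}$. All of these estimates are confined to Appendix~\ref{Appendix3}, and the body of the section merely splices them together with the reductions above; the same scheme, fed the corresponding $\mathfrak{M}(\mathcal{S})$ bound, then yields Theorem~\ref{Cor73} for $\mathcal{S}=C^{\omega},\Cinf,\mathcal{E}^{\{M\}}$ as well.
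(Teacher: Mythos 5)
Your reductions match the paper's --- isolate $F_1$ and $H_1$ via the microlocal decomposition, invoke ellipticity through the analytic parametrix built in Appendix~\ref{Appendix3}, close with Proposition~\ref{FundamentalRemark} --- but the mechanism you propose in your second paragraph is not what the Appendix actually delivers, and the step it leans on would fail as stated. You want an operator $E$ acting on analytic functionals with $EP = I + R$, $R$ analytic-regularizing, and $E$ transporting $\mathfrak{M}(\mathcal{D}')$ bounds, so that $\mu = EP\mu - R\mu$ converts the polynomial estimate on $\mathcal{F}_p(P\mu)$ directly into one on $\mathcal{F}_p\mu$. The symbol heuristic offered in support (``${}^tP_w$ acts on the weight $e^{i(\tau-w)\xi - |\xi|p(\tau-w)}$ like multiplication by $P_m(w,\xi)$ up to terms vanishing on the diagonal $\tau=w$'') is not tight: differentiating the weight in $w$ produces contributions $|\xi|\nabla p(\tau-w)$ of size $|\xi|\,|\tau-w|^{2k-1}$, which are of the \emph{same} order in $|\xi|$ as $P_m$, and the FBI weight gives only exponential localization near $\tau=w$, not pointwise vanishing; so no symbol inversion in $(\tau,\xi)$ alone will turn one FBI estimate into the other. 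More to the point, the Appendix does not construct such an operator $E$ on analytic functionals, nor prove any statement about $E$ preserving $\mathfrak{M}(\mathcal{D}')$ estimates across $\mathcal{F}_p$.

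What Appendix~\ref{Appendix3} in fact builds is a right parametrix applied to the \emph{holomorphic function} $H_1$, not an operator: from $p(z,\xi)\circ a(z,\xi) = 1 + s(z,\xi)$ with $s$ exponentially small it produces holomorphic $E_{H_1}, S_{H_1}$ with $P(z,D_z)E_{H_1} = H_1 + S_{H_1}$, $\bb S_{H_1} \in C^{\omega}(U)$, and $\bb E_{H_1}\in\mathcal{D}'(U)$ --- the latter obtained by integrating by parts inside the integral defining $E_{H_1}$ and using the polynomial growth of $\mathcal{F}_p\nu$ over $\mathcal{C}_1$, not by any operator estimate. From there the paper still has to carry out the actual elliptic-regularity argument, which your sketch omits entirely: it writes $P(\bb F_1 - \bb E_{H_1}) = \sum_{j\geq 2}(\bb H_j - P \bb F_j) - \bb S_{H_1}$, a sum of boundary values from cones opposing $\xi_0$ plus a real-analytic function; applies \cite[Theorem 9.3.7]{Hormander1} to split it into $g_j$ with $WF_a(g_j) \subset U\times(\Gamma_1\cap\Gamma_j)$ (empty after shrinking $\Gamma_1$, so the $g_j$ are real-analytic); solves $P\bb U_j = g_j$ with holomorphic $U_j$ via \cite[Corollary 9.4.9]{Hormander1}; and then uses \cite[Theorem 9.3.3]{Hormander1} together with the uniqueness part of Corollary~9.4.9 to conclude that $F_1 - E_{H_1} - \sum_j U_j$ extends holomorphically near $x_0$, so $\bb F_1$ differs from $\bb E_{H_1}\in\mathcal{D}'$ by something real-analytic. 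This chain of H\"ormander's Sato-type theorems is the substance of the proof and has no counterpart in your proposal.
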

\begin{proof}
  From the microlocal decomposition, Definition~\ref{MLdecom}, it follows that 
  \begin{align*}
    P \bb F_1- \bb H_1 = \sum_{j=2}^{\ell} (\bb H_j- P \bb F_j).
  \end{align*}
  Since $(x_0, \xi_0) \notin \Char P$ we can, reducing $U$ and $\Gamma_1$, assume  that $P$ is elliptic on $U+i\Gamma_1.$ 

In Appendix \ref{Appendix3} we prove that there are holomorphic functions $E_{H_1}$ and $S_{H_1}$ such that $P \bb E_{H_1}= \bb H_1+ \bb S_{H_1}$ where $\bb E_{H_1} \in \mathcal{D}'(U)$ and $\bb S_{H_1}\in C^{\omega}(U)$.

Therefore we obtain that
\begin{align*}
  P(\bb F_1-  \bb  E_{H_1})= \sum_{j=2}^{\ell} (\bb H_j - P \bb F_j)- \bb S_{H_1}
\end{align*}

Thanks to \cite[Theorem 9.3.7]{Hormander1}, we have that there are $g_2, \ldots, g_\ell\in \mathcal{B}(U)$ such that $WF_a(g_j) \subset U \times (\Gamma_1\cap \Gamma_j)$ and
\begin{align*}
  P (\bb F_1- \bb E_{H_1}) = \sum_{j=2}^{\ell} g_j.
\end{align*}

Note that $\Gamma_1$ can be choose such that $\Gamma_1\cap \Gamma_j = \emptyset$ for every $j=2, \ldots, \ell$. Thus, we can assume that $g_2, \ldots, g_\ell$ are real-analytic.  Now,  \cite[Corollary 9.4.9]{Hormander1} allows us to find, for every $j=2, \ldots, \ell$,  holomorphic functions  $U_j $ defined in complex neighborhood of $x_0$   such that  $P(z, D) \bb U_j= g_j$ in a neighborhood of $x_0$. From \cite[Theorem 9.3.3]{Hormander1}, we know that 
\begin{align*}
  \bb \Big(P(z, D_z) \big( F_1-  E_{H_1} - \sum_{j=2}^{\ell} U_j\big)\Big)=0
\end{align*}
implies
\begin{align*}
  P(x, D) \big( F_1-  E_{H_1} - \sum_{j=2}^{\ell} U_j\big)=0.
\end{align*}
Now by the second part of \cite[Corollary 9.4.9]{Hormander1} we conclude that $ F_1-  E_{H_1} - \sum_{j=2}^{\ell} U_j$ has an extension as a holomorphic function in a neighborhood of the origin. Therefore, there is $W$ a neighborhood of $x_0$ such that
\begin{align*}
  \bb F_1- \sum_{j=2}^{\ell} \bb U_j- \bb \big( F_1-  E_{H_1} - \sum_{j=2}^{\ell} U_j\big)= \bb E_{H_1} \in \mathcal{D}'(W)
\end{align*}
Thus $(x_0, \xi_0)\notin WF_{\mathcal{D}'}(\bb F_1)$ and, consequently,  $(x_0, \xi_0) \notin WF_{\mathcal{D}'}(u)$.
\end{proof}

\subsection{Proof of Theorem~\ref{Cor73}}\label{conlcusionP}
Assume that $\mathcal{S}$ is a subsheaf of $\mathcal{D}'$. We
know that $WF_{\mathcal{D}'}(u)\subset WF_{\mathcal{D}'}(Pu) \cup \Char P$, Theorem~\ref{TeoRegElip}. Thus, if $(x_0, \xi_0)\notin WF_{\mathcal{S}}(Pu) \cup \Char P$ then $(x_0, \xi_0)\notin WF_{\mathcal{D}'}(u)$. Thus we can assume that $\bb F_1 \in \mathcal{D}'(U)$. Since the elliptic regularity theorem holds for distributions when we consider their regularity with respect to $\mathcal{S}$, \cite{Hormander1}, we have
\begin{align}\label{know}
  WF_{\mathcal{S}}(\bb F_1) \subset WF_{\mathcal{S}}(P \bb F_1) \cup \Char P.
\end{align}
Additionally, if $j= 2, \ldots, \ell,$ then $(x_0, \xi_0)\notin  WF_{\mathcal{S}}(P \bb F_j)=  WF_{\mathcal{S}}\big( \bb (P F_j)\big)$, because $P(z, D_z)F_j(z)$ is a holomorphic function defined in a cone opposing the direction $\xi_0$. Therefore,  $(x_0, \xi_0)\notin WF_{\mathcal{S}}(Pu) \cup \Char P$ implies that $(x_0, \xi_0) \notin WF_{\mathcal{S}}(P \bb F_1) \cup \Char P$ which together with \eqref{know} give us that $(x_0, \xi_0) \notin  WF_{\mathcal{S}}(\bb F_1)$. Consequently, by the microlocal decomposition, we obtain that $(x_0, \xi_0) \notin   WF_{\mathcal{S}}(u)$. \qed

\medskip

The techniques in the proof of Theorem~\ref{Cor73}, allow us to state a more general result.

\begin{Cor}\label{Cor73???}
Let $P$ be a linear differential operator of order $m$ with real-analytic coefficients in an open set $\Omega\subset\R^N$.
  Let $\mathcal{S}$ be a subsheaf of $\mathcal D'$ in which the elliptic regularity theorem holds, that is, 
  \begin{align*}
  WF_{\mathcal{S}}(u) \subset WF_{\mathcal{S}}(P u) \cup \Char P,\quad \forall u\in\mathcal D'(\Omega)
\end{align*}
   then for every $u \in \mathcal{B}(\Omega)$  it follows that
  \begin{align*}
    WF_{\mathcal{S}}(u) \subset WF_{\mathcal{S}}(Pu) \cup \Char P.
  \end{align*}
\end{Cor}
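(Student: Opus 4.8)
The plan is to rerun, almost word for word, the argument of Subsection~\ref{conlcusionP} (the proof of Theorem~\ref{Cor73}), the only change being that the elliptic regularity estimate for distributions with respect to $\mathcal S$ is now a hypothesis rather than a citation. Throughout I tacitly assume $C^\omega\subset\mathcal S\subset\mathcal D'$, as is needed for the microlocal decomposition of Definition~\ref{MLdecom} and for Proposition~\ref{FundamentalRemark} to apply. Fix $u\in\mathcal B(\Omega)$ and a covector $(x_0,\xi_0)\notin WF_{\mathcal S}(Pu)\cup\Char P$; the goal is to show $(x_0,\xi_0)\notin WF_{\mathcal S}(u)$. Since $\mathcal S$ is a subsheaf of $\mathcal D'$ one has $WF_{\mathcal D'}(v)\subset WF_{\mathcal S}(v)$ for any hyperfunction $v$, so $(x_0,\xi_0)\notin WF_{\mathcal D'}(Pu)\cup\Char P$, and Theorem~\ref{TeoRegElip} gives $(x_0,\xi_0)\notin WF_{\mathcal D'}(u)$.

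First I would fix a bounded open neighborhood $U$ of $x_0$ in $\Omega$, representatives $\mu,\nu\in\mathcal O'(\overline U)$ of $u$ and of $Pu$, and form their microlocal decompositions at $(x_0,\xi_0)$: open cones $\Gamma_1,\dots,\Gamma_\ell$ with $\xi_0\in\Gamma_1$ and $\xi_0\cdot\Gamma_j<0$ for $j\ge2$, holomorphic functions $F_j,H_j\in\mathcal O(\mathcal W_\delta(U;\Gamma_j))$ with $u=\sum_{j=1}^\ell\bb F_j$ and $Pu=\sum_{j=1}^\ell\bb H_j$, and $F_1$ given by \eqref{Fum} applied to $\mu$. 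For $j\ge2$ each $\bb F_j$ is the boundary value of a holomorphic function in a cone opposing $\xi_0$, hence microlocally real-analytic, and thus microlocally $\mathcal S$-regular, at $(x_0,\xi_0)$; the same holds for $P\bb F_j=\bb(PF_j)$, since $PF_j$ is again holomorphic in the wedge $\mathcal W_\delta(U;\Gamma_j)$ because $P$ has real-analytic coefficients. Consequently $(x_0,\xi_0)\notin WF_{\mathcal S}(u)$ if and only if $(x_0,\xi_0)\notin WF_{\mathcal S}(\bb F_1)$; moreover, writing $Pu=P\bb F_1+\sum_{j\ge2}P\bb F_j$, the hypothesis $(x_0,\xi_0)\notin WF_{\mathcal S}(Pu)$ forces $(x_0,\xi_0)\notin WF_{\mathcal S}(P\bb F_1)$.

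Next I would upgrade the microlocal statement $(x_0,\xi_0)\notin WF_{\mathcal D'}(u)$ to a local one about $\bb F_1$: the construction inside the proof of Theorem~\ref{TeoRegElip} produces, on a neighborhood $W$ of $x_0$, holomorphic functions $E_{H_1},S_{H_1}$ (with $\bb S_{H_1}$ real-analytic) and $U_2,\dots,U_\ell$ (with $\bb U_j$ real-analytic, since $\Gamma_1$ may be chosen so that $\Gamma_1\cap\Gamma_j=\emptyset$) such that $F_1-E_{H_1}-\sum_{j\ge2}U_j$ extends holomorphically near $x_0$ and $\bb E_{H_1}\in\mathcal D'(W)$; hence $\bb F_1$ coincides near $x_0$ with the distribution $\bb E_{H_1}$ modulo real-analytic terms, so $\bb F_1\in\mathcal D'(W)$. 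Applying the hypothesis (elliptic regularity for distributions with respect to $\mathcal S$) to a local distribution representative of $\bb F_1$ yields $WF_{\mathcal S}(\bb F_1)\subset WF_{\mathcal S}(P\bb F_1)\cup\Char P$ near $x_0$. Combining this with $(x_0,\xi_0)\notin WF_{\mathcal S}(P\bb F_1)$ and $(x_0,\xi_0)\notin\Char P$ gives $(x_0,\xi_0)\notin WF_{\mathcal S}(\bb F_1)$, and therefore $(x_0,\xi_0)\notin WF_{\mathcal S}(u)$ by the microlocal decomposition.

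I do not expect a genuine difficulty: the proof of Theorem~\ref{Cor73} never used anything about $\mathcal S$ beyond $C^\omega\subset\mathcal S\subset\mathcal D'$ and the validity of elliptic regularity for distributions in $\mathcal S$, so literally the same argument works. The one point deserving a little care is the passage from $(x_0,\xi_0)\notin WF_{\mathcal D'}(u)$ to $\bb F_1\in\mathcal D'(W)$, which has to be extracted from the internal workings of the proof of Theorem~\ref{TeoRegElip} (and ultimately from \cite{Hormander1}) rather than from its statement; once that is in hand the remainder is purely formal.
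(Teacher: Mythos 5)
Your proposal is correct and follows essentially the same route as the paper: the paper justifies this corollary by remarking that the techniques of the proof of Theorem~\ref{Cor73} (Subsection~\ref{conlcusionP}) carry over verbatim once the elliptic regularity estimate for distributions is taken as a hypothesis rather than cited, and you reproduce exactly that argument, including the reduction via Theorem~\ref{TeoRegElip} to $\bb F_1\in\mathcal D'(W)$ and the application of the hypothesis to $\bb F_1$. You also correctly flag the implicit assumption $C^\omega\subset\mathcal S$ and the fact that "$\bb F_1\in\mathcal D'(W)$" must be extracted from the internal construction (not just the statement) of Theorem~\ref{TeoRegElip}, which are both accurate readings of the paper's abbreviated argument.
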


\begin{Exe}
Consider $\omega$ a weight function and $\mathcal E_{\{\omega\}}(\Omega)$ the $\omega$-ultradifferentiable functions of Roumieu
type as defined in \cite{BMT}. Since the elliptic regularity theorem holds for distributions \cite{AJO}, then for every $u \in \mathcal{B}(\Omega)$ it hold
  \begin{align*}
    WF_{\mathcal E_{\{\omega\}}}(u) \subset WF_{\mathcal E_{\{\omega\}}}(Pu) \cup \Char P.
  \end{align*}

\end{Exe}
\section{Final comments}


\subsection{Microlocal regularity in every direction}

  One may ask if $(x_0, \xi) \notin WF_{C^{\omega}}(u)$ for every $\xi\in \RN$, then $u$ is real-analytic in a neighborhood of $x_0$.  Using the compactness of the unitary sphere in $\RN$ we can use this condition to prove that there is $V\subset\RN$ such that
  \begin{align*}
    |\mathcal{F}_p\mu(\tau,\xi)|\leq c_1 e^{-c_2|\xi|}, \quad \forall(\tau, \xi) \in V\times \RN.
  \end{align*}

  Therefore, when we write $\mu_{\epsilon}(x)= \mu_0^{\epsilon}(x)+ \mu_1^{\epsilon}(x)$, as in \eqref{oieur} and \eqref{oieurgbi}, then Proposition~\ref{ConvergenceLemma} gives that $\mu_1$ is real-analytic in a neighborhood of $x_0$, whereas $\mu_0^{\epsilon}$ converges to a real-analytic function by the same arguments given in the proof of Proposition~\ref{laiw}.
Analogously, the same is true for $\Cinf, \mathcal{E}^{\{M\}},\mathcal{D}', {\mathcal{D}'}^{\{M\}}$.
So, at least for any of the sheaves treated in this work, if $u \in \mathcal{B}(U)$ and $WF_{\mathcal{S}}(u)= \emptyset$, then
 that $u\in \mathcal{S}(U).$

\subsection{Recovering the definition of Wave-front set in the sense of distributions}

Assume that $u\in \mathcal{D}'(U)$ and $\mathcal{S}= \Cinf, C^{\omega}, \mathcal{E}^{\{M\}}$. The definition of wave-front set with respect to $\mathcal{S}$ that we adopted in this work is different from the classical one. They are, of course, equivalent and the key to see that is that both are completely characterized by the behavior of the FBI transform. Since $u\in \mathcal{D}'(U)$ the behavior of the FBI transform is independent if we are identifying the distribution with an analytic functional or not, we conclude that the definitions are equivalent.

\subsection{Microlocal regularity in Denjoy-Carleman classes of Beurling type}
Similarly to \eqref{DesCM} and \eqref{DesDualCM} one can define the microlocal conditions for the spaces of Denjoy-Carleman classes of ultradifferentiable functions (quasi and non-quasianalytic) and Denjoy-Carleman  non-quasianalytic classes of ultradistributions of Beurling type.
Given an analytic functional $\mu \in \mathcal{O}'(\overline{U})$, $V\subset \subset U$ and a cone $\Gamma\in \RN$, we say that $\mu$ satisfies the condition
$\mathfrak{M}(\mathcal{E}^{(M)})$  of{ \it microlocal regularity in $V\times \Gamma$  with respect to the classes} $\mathcal{E}^{(M)}$ if for every positive constant $\rho$ there exists  $C_\rho>0$  such that
  \begin{align}\label{DesCM2}
  \tag{$\mathfrak{M}(\mathcal{E}^{(M)})$}
    |\mathcal{F}_{p}\mu(\tau, \xi)|\leq C_\rho e^{-M(\rho|\xi|)}, \quad \forall (\tau, \xi) \in V\times \Gamma.
  \end{align} 

And we say that $\mu$ satisfies the condition $\mathfrak{M}(\mathcal{D}'^{(M)})$ of{ \it microlocal regularity in $V\times \Gamma$ with respect to $\mathcal{D}'^{(M)}$} if there exist constants $C,c>0$  such that
  \begin{align}\label{DesDualCM2}
  \tag{$\mathfrak{M}(\mathcal{D}'^{(M)})$}
    |\mathcal{F}_{p}\mu(\tau, \xi)|\leq C e^{M(c|\xi|)}, \quad \forall (\tau, \xi) \in V\times \Gamma.
  \end{align}

Here $M(t)$ is the associated function given by \eqref{asssssssociate}. The proof that microlocal regularity regarding the sheaf of Denjoy-Carleman functions of Beurling type is equivalent to both conditions $(\mathfrak{R}_1^{\mathcal{E}^{(M)}})$ and $(\mathfrak{R}_2^{\mathcal{E}^{(M)}})$ follows the same ideas as before with one additional step needed: to prove the local regularity characterization using FBI transform estimates for Denjoy-Carleman  functions of Beurling type for distributions and believe it can be done following the ideas in \cite{HM, Furdos}. The analogous result for Denjoy-Carleman ultradistributions of Beurling type can be done using the computations in the proof of the Roumieu case, see Remark~\ref{Obs:Beurling}.

\appendix
\section{}

\subsection{ A remark about cones.}\label{AppA1}
Let $y \in \R^{N}\setminus\{0\}$ and consider $C_r^{y}= \{ x \in S^{N-1}: d\big(x, \frac{y}{|y|}\big)< r\}$ for simplicity we assume that $r$ is small in order to have the following property: $x\in C_r^y$ then $-x \notin C_r^y$. We consider $\Gamma_r^y$ the cone generated by $C_r^y$, i.e., $\Gamma_r^y= \{ x \in \RN: x/|x| \in C_r^y\}.$ 
Choose $\xi_0\in \RN\setminus\{0\}$. There is $s>0$ such that $y- s \xi_0 \in \Gamma_r^y$ and so there is $t>0$ such that $y- \xi$ belongs to a compact set $K$ of $\Gamma_r^y$ for every $\xi\in \Gamma^{\xi_0}_t$ with $|\xi|\leq s.$ Now define $\lambda= s/ |y|$. Thus, we have that $y- \xi\in K$ for every $\xi\in \Gamma^{\xi_0}_t$ with $|\xi|\leq \lambda|y|.$ Now note that if $y'\in \RN$ is such that $\rho y'= y$, where $\rho>0,$ then $y' - \xi\in \rho^{-1} K$ for every $\xi\in \Gamma^{\xi_0}_t$ with $|\xi|\leq \lambda |y'|$. This last comment is important because it proves that we can reduce the norm of $y$ and the property still holds for the same cones and same constant $\lambda$ (and for a smaller compact set of $\Gamma^{y}_r$). Observe that if $\xi_0 \cdot y< 0$ we can choose $t$ such that $\xi \cdot y<0$ for every $\xi\in \Gamma^{\xi_0}_t$.

\subsection{Denjoy-Carleman space of ultradifferentiable functions}\label{SA3}

In this work we considered the spaces of Denjoy-Carleman functions defined by sequences. 
Let $M=(M_k)_{k\in\Z_+}$ be a sequence of positive real numbers. We will impose some condition on $(M_k)_{k\in \Z_+}$ in order to invoke the results from  \cite{Furdos} and \cite{Ko}.

There is the standard initial condition,
 \begin{equation} \label{P1} 
M_0=M_1=1.
\end{equation}
We will also assume that the sequence $(M_k/k!)_{k\in Z_+}$ is logarithmic convex, i.e.,
\begin{equation} \label{logconvex1}
\bigg(\frac{M_k}{k!}\bigg)^2 \leq \frac{M_{k-1}}{(k-1)!} \frac{ M_{k+1}}{(k+1)!} , \quad \forall k \geq 1. 
\end{equation}
This implies that $(M_k)_{k\in Z_+}$ is logarithmic convex, i.e.,
\begin{equation} \label{logconvex}
M_k^2 \leq M_{k-1}  M_{k+1} , \quad \forall k \geq 1. 
\end{equation}
Our third hypothesis on the sequence $(M_k)_{k\in\Z_+}$ is the stability under differential operators condition, there are $A, H>0$ such that:
\begin{equation}\label{M2'} 
M_{k}\leq A H^{k} M_{k-1} \quad \forall k\in \Z_+.
\end{equation} 
The last condition is
\begin{equation}
  \label{M4}
   \sqrt[\leftroot{-1}\uproot{2}\scriptstyle k]{\frac{M_k}{k!}} \lra  \infty.
\end{equation}

\begin{Def}[Regular sequences]\label{regseqdefinition} A sequence $M= (M_k)_{k\in \Z_+}$ satisfying \eqref{P1}, \eqref{logconvex1}, \eqref{M2'} and \eqref{M4} will be called a regular sequence.
\end{Def}

\begin{Def}[Denjoy-Carleman functions]\label{regDenjoyCarlemanfucntionsdef}
Consider a sequence of positive numbers $M=(M_k)_{k\in \Z+}$.  Let $\Omega$ be an open subset of $\RN$ and consider $f\in \Cinf(\Omega)$. We shall say that $f$ is a Denjoy-Carleman (ultradifferentiable) function of class  $M$ if for every $K\subset \Omega$ compact there are $C>0$ and $r>0$ such that
\begin{equation}
  \sup_{x\in K}|\del_x^\alpha f(x)|\leq C r^{|\alpha|}M_{|\alpha|},
\end{equation}
for every $\alpha\in \ZN_+$, this is equivalent to say that
\begin{align}
  \|f\|_{r, K}:= \sup_{\alpha\in \Z_+}\sup_{x\in K}\frac{ |\del_x^\alpha f(x)|}{r^{|\alpha|}M_{|\alpha|}}< \infty.
\end{align}
  We will denote the space of Denjoy-Carleman functions of class $M$ in $\Omega$ by $\mathcal{E}^{\{M\}}(\Omega)$.
{\it In this work we will only consider Denjoy-Carleman functions defined by regular sequences.}
\end{Def}

\begin{Rem}\label{Obs:Beurling}
The space in Definition~\ref{regDenjoyCarlemanfucntionsdef} is often called Denjoy-Carleman space of Roumieu type. The space of smooth functions, $f\in \Cinf(\Omega)$, such that for every $K\subset \Omega$ compact and every $r>0$ $ \|f\|_{r, K}< \infty$  is called the Denjoy-Carleman space of Beurling type. For simplicity, all the results bellow are stated and proved to the Denjoy-Carleman spaces of Roumieu type but they can be extended to the Denjoy-Carleman spaces of Beurling type and it will be left as an exercise to the reader.
\end{Rem}

 Let us discuss some consequence of these hypotheses. Property \eqref{M4} implies that $C^{\omega}(U)$ is a proper subspace of $\mathcal{E}^{\{M\}}(U)$.
If the sequence $M=(M_k)_{k\in\Z_+}$ satisfies \eqref{P1} and \eqref{logconvex},
then $\mathcal E^{\{M\}}$ is closed  under compositions, see \cite{bierstonemilman}.

The stability  under differential operators yields the following inequality for the terms of the sequence:
\begin{align}\label{M2'inducao}
  M_{k+\ell} & \leq (A H^{k})^{\ell} H^{\ell(\ell+1)/2} M_{k},
\end{align}
where $A$ and $H$ are given by \eqref{M2'}.

Denjoy-Carleman theorem says that $\mathcal{E}^{\{M\}}(\Omega)$ is quasianalytic if and only if
\begin{equation}
  \sum_{k=1}^{\infty} \frac{1}{M_k^{1/k}} = \infty.
\end{equation}

Note that the Gevrey sequences $M_p=p!^s$ satisfies  \eqref{P1},  \eqref{logconvex1}, \eqref{M2'}  and \eqref{M4} for all $s> 1$, moreover, they are non-quasianalytic. The Gevrey space associated with $p!$ is the space of real-analytic functions and, in this case, \eqref{P1},  \eqref{logconvex1} and \eqref{M2'} hold and the class  is quasianalytic. 

 When $M$ is non-quasianalytic class we denote by $\mathcal{D}^{\{M\}}(U)$ the subspace of the compactly supported functions in $\mathcal{E}^{\{M\}}(U).$ If $\phi\in \mathcal{D}^{\{M\}}(U)$, there is $r>0$  such that
\begin{align}\label{DesDCcompactosupp}
  \|\phi\|_{r, \overline{U}}:= \sup_{\alpha\in \Z_+}\sup_{x\in \overline{U}}\frac{ |\del_x^\alpha \phi(x)|}{r^{|\alpha|}M_{|\alpha|}}< \infty.
\end{align}
Sometimes is important to know the $r>0$ for which \eqref{DesDCcompactosupp} holds. Let us fix  $\phi\in \mathcal{D}^{\{M\}}(U)$ satisfies \eqref{DesDCcompactosupp} for $r>0$, then we can use \eqref{M2'inducao} to obtain
\begin{align}
  \|\Delta^\ell \phi\|_{H^{2\ell}r, \overline{U}}&= \sup_{\alpha\in \ZN_+} \sup_{\overline{U}}\frac{|\del^\alpha \Delta^{\ell}\phi|}{(H^{2\ell} r)^{|\alpha|} M_{|\alpha|}}\nonumber\\
&\leq \sup_{\alpha\in \ZN_+} \sup_{\overline{U}}\frac{|\del^\alpha (\Delta^{\ell}\phi)|}{ r^{|\alpha|+2\ell} M_{|\alpha|+2 \ell}} \frac{ r^{2\ell} M_{|\alpha|+2 \ell}}{ H^{2\ell|\alpha|} M_{|\alpha|}}\nonumber\\
&\leq r^{2\ell}H^{\ell(2\ell+1)} A^{2 \ell}N \|\phi\|_{r, \overline{U}}< \infty. \label{DesLapDCEst}
\end{align}

If $M=(M_k)_{k\in \Z_+}$ is a sequence of positive numbers we define the associated function $M: (0,\infty) \lra  (0,\infty)$ by
\begin{align}\label{asssssssociate}
M(t) := \sup_{k\in \Z_+}\log \frac{t^k }{M_k}.
\end{align}
Let us now use the associate function to prove the continuity of the Fourier transform acting on a function $\phi\in \mathcal{D}^{\{M\}}(U)$. First, note that
\begin{align}
  |\xi^{\alpha} \widehat{\phi}(\xi)|&= \bigg| \int_{U} \del^\alpha_x e^{i x \xi} \phi(x) \dd x\bigg|\nonumber\\ 
&\leq m(U) \|\phi\|_{r, \overline{U}}r^{|\alpha|}M_{|\alpha|}.\label{Desigualdaporbaixo}
\end{align}

If $\xi\in \RN\setminus\{0\}$, then there is $j\in \{1, \ldots, N\}$ such that $\xi_j^2\geq \xi_k^2$ for every $k\in \{1, \ldots, N\}.$ So we can define $\tilde{\alpha}= |\alpha| e_j$, note that $|\xi^{\tilde{\alpha}}|= |\xi_j|^{|\alpha|}\geq  (|\xi|/\sqrt{N})^{|\alpha|}$, thus we can use \eqref{Desigualdaporbaixo} to obtain
\begin{align*}
  |\widehat{\phi}(\xi)|&\leq m(U) \|\phi\|_{r, \overline{U}}\frac{(\sqrt{N}r)^{|\alpha|}M_{|\alpha|} }{|\xi|^{|\alpha|}}\\
&\leq m(U) \|\phi\|_{r, \overline{U}}\exp\bigg[ - \log \bigg(\frac{|\xi|^{|\alpha|}}{\big(\sqrt{N}r\big)^{|\alpha|}M_{|\alpha|} }\bigg)\bigg].
\end{align*}
Thus
\begin{align}
  |\widehat{\phi}(\xi)| &\leq m(U) \|\phi\|_{r, \overline{U}}\exp \inf_{k\in \Z_+}\Big[ - \log \Big(\frac{|\xi|^{k}}{(\sqrt{N}r)^{k}M_{k} }\Big)\Big]\nonumber\\
&\leq m(U) \|\phi\|_{r, \overline{U}}e^{- M\big(\frac{|\xi|}{\sqrt{N}r}\big)}. \label{FourierContinuityinDC}
\end{align}

When $\mathcal{E}^{\{M\}}(U)$ is non-quasianalytic and defined by a regular sequence $M= (M_k)_{k\in \Z_+}$ we can proceed as in the distribution theory and consider the dual space of $\mathcal{D}^{\{M\}}(U)$.
\begin{Def}[Denjoy-Carleman ultradistributions] \label{Defultradis}
  We will say that a linear functional $u: \mathcal{D}^{\{M\}}(U)\lra \C$ is an ultradistribution of class $M$ if, for every $r>0$ and every $K\subset U$ compact there is $C_r>0$ such that
  \begin{align}\label{ultradistributionDefinition}
    |u(\phi)|\leq C_r \|\phi\|_{r, K},
  \end{align}
for every $\phi\in \mathcal{D}^{\{M\}}(U)$ (we are working under the convention  that if $\|\phi\|_{r, K}= \infty$ then \eqref{ultradistributionDefinition} is automatically satisfied). The space of all ultradistributions in $U$ of class $M$ is denoted by $ {\mathcal{D}^{\{M\}}}'(U)$.
One can identified the space of the ultradistributions of class $M$ with compact with the dual space of $\mathcal{E}^{\{M\}}(U)$, i.e., the space of all linear functionals  $u: \mathcal{E}^{\{M\}}(U)\lra \C$ such that there is a compact $K\subset U$ such that for every $r>0$ there is $C_r>0$ such that
\begin{align}\label{ultradistributioncompactDefinition}
    |u(\phi)|\leq C_r \|\phi\|_{r, K},
\end{align}
for every $\phi\in \mathcal{E}^{\{M\}}(U)$.
{\it In this work we only consider ultradistributions that are in the dual of a non-quasianalytic class and are defined by a regular sequence.}
\end{Def}

\subsection{Existence of a microlocal parametrix} \label{Appendix3}

Consider $U$ an open subset of $\RN$, $x_0 \in U$ and $\nu \in \mathcal{O}'(\overline{U})$. Moreover assume that we have $P$ a differential operator of order $m$ with real-analytic coefficients that is elliptic in a cone $\mathcal{C}_1$. We are going to impose some growth condition on $\mathcal{F}_p\nu (\tau, \xi)$ for every $\xi \in \mathcal{C}_1$ and every $\tau$ near $x_0$, so there is $a>0$ such that the condition holds if $|\tau-x_0|\leq a$ and $\xi\in \mathcal{C}_1$.

Now we define
\begin{align*}
  H_1(z) =\int_{|\tau-x_0|\leq a} \int_{\mathcal{C}_1} e^{i\xi(z-\tau)} \mathcal{F}_p\nu (\tau, \xi) |\xi|^{\frac{N}{2k}} \dd \xi \dd \tau.
\end{align*}
Observe that $H_1$ is a holomorphic function  $\mathcal{W}(U;\Gamma_1)$ for some cone $\Gamma_1$ such that $y\cdot\xi< 0$ for all $y \in \Gamma_1$ and $\xi\in \mathcal{C}_1$.

We will  extend the Definition 1.7 from \cite{Boutet} allowing analytic symbol be defined only on $U_{\delta}\times \mathcal{C}_1$. So if $q$ is an analytic symbol defined on $U_\delta\times \mathcal{C}_1$, then we can define $G$ in $\mathcal{W}_\delta(U;  \Gamma_1)$ by
\begin{align}\label{DefinitionOfG}
  G(z)=  \int_{|\tau-x_0|\leq a} \int_{\mathcal{C}_1} q(z, \xi) e^{i\xi(z-\tau)} \mathcal{F}_p\nu (\tau, \xi) |\xi|^{\frac{N}{2k}} \dd \xi \dd \tau.
\end{align}

Observe that
\begin{align*}
  P(z, D_z) G(z)&=  \int_{|\tau-x_0|\leq a} \int_{\mathcal{C}_1} \sum_{|\alpha|\leq m} \sum_{\beta\leq \alpha} {\alpha \choose \beta} a_\alpha(z)\xi^{\alpha-\beta}  D_z^\beta q(z, \xi) e^{i\xi(z-\tau)} \mathcal{F}_p\nu (\tau, \xi) |\xi|^{\frac{N}{2k}} \dd \xi \dd \tau\\
&=  \int_{|\tau-x_0|\leq a} \int_{\mathcal{C}_1} \sum_{|\alpha|\leq m} \sum_{\beta\leq \alpha}\frac{1}{\beta!} a_\alpha(z) \del_\xi^{\beta}(\xi^{\alpha})  D_z^\beta q(z, \xi) e^{i\xi(z-\tau)} \mathcal{F}_p\nu (\tau, \xi) |\xi|^{\frac{N}{2k}} \dd \xi \dd \tau.
\end{align*}
Recall that given two symbols of pseudodifferential operators $ p(z, \xi)$ and $ q(z, \xi)$ then the symbol of the composition,  $ p(z, \xi)\circ q(z,\xi)$, is given by
\begin{align*}
  p(z, \xi)\circ q(z,\xi)= \sum_{\gamma \in \ZN_+} \frac{i^{-|\gamma|}}{\gamma!} \bigg[\bigg(\frac{\del}{\del \xi}\bigg)^{\gamma} p(z, \xi)\bigg] \bigg[\bigg( \frac{\del}{\del z}\bigg)^{\gamma} q(z, \xi)\bigg].
\end{align*}

Therefore, the action of $P(z, D_z)$ on $G(z)$ is defined by the composition of the analytic symbols $p$ and $q$.

In the special case when $q= p_m^{-1}$, we have
\begin{align}
  P(z, D_z) G(z)&=  \int_{|\tau-x_0|\leq a} \int_{\mathcal{C}_1^{\delta_0}} P(z, D_z) \bigg(\frac{ e^{i\xi(z-\tau)}}{p_m(z, \xi)}\bigg) \mathcal{F}_p\nu (\tau, \xi) |\xi|^{\frac{N}{2k}} \dd \xi \dd \tau \nonumber\\
  &= H_1(z)+ \int_{|\tau-x_0|\leq a} \int_{\mathcal{C}_1^{\delta_0}} r(z, \xi) e^{i\xi(z-\tau)} \mathcal{F}_p\nu (\tau, \xi) |\xi|^{\frac{N}{2k}} \dd \xi \dd \tau\label{Eqummaisr}
\end{align}
where
\begin{align*}
  r(z, \xi)           &=  \sum_{|\alpha|\leq m} a_{\alpha}(z) \sum_{\substack{ | \alpha- \beta| \neq m \\ \beta\leq \alpha}} \frac{1}{\beta!} \del_\xi^{\beta}\xi^{\alpha} D^{\beta}_z\bigg(\frac{1}{p_m(z, \xi)}\bigg).
\end{align*}

Thus equality \eqref{Eqummaisr} can be understood in terms of symbols of (analytic) pseudodifferential operators:
\begin{align*}
  p(z, \xi) \circ (p_m(z,\xi))^{-1}= 1+ r(z,\xi).
\end{align*}
Here we need to be careful to invoke results regarding analytic pseudodifferential operators because $(p_m(z, \xi))^{-1}$ and $r(z, \xi)$ are defined only for $z $ in a complex neighborhood of $U$, $U_\delta$, and $\xi \in \mathcal{C}_1$. Our main reference is the article from Boutet de Monvel, \cite{Boutet}. The key observation here is that his arguments (at least the ones we needed) also work for symbols defined only in cones.   

Note that $(p_m(z,\xi))^{-1}$ is a symbol of order $-m$ and $r(z, \xi)$ is a symbol of order $-1$, therefore, following the ideas from \cite{Boutet},  the formal inverse for $1+ r(z, \xi)$ is given by
\begin{align*}
  \sum_{j=0}^{\infty} (-1)^{j}r^{j}(z, \xi)
\end{align*}
here $r^{j}(z, \xi)= r(z, \xi) \circ (r^{j-1}(z, \xi))$ where $\circ$ stands for the composition of symbol of pseudodifferential operators and $r^{0}(z,\xi)=1$.

We can apply \cite[Theorem 1.23]{Boutet} to find an analytic symbol $a(z, \xi)$ defined in $U_{\delta}\times \mathcal{C}_1$ such that for every $K\subset U$ compact there are $\epsilon>0$ and $c>0$ such that 
\begin{align*}
  \Big|a(z, \xi)- (p_m(z, \xi))^{-1} \circ \sum_{j=0}^{\infty} (-1)^{j} r^{j}(z, \xi)\Big|\leq c e^{-\epsilon |\xi|}
\end{align*}
for every $z \in \{ \zeta\in \CN: d(\zeta, K)< \epsilon\}$ and $\xi\in \mathcal{C}_1.$

Set $s(z, \xi)= p(z,\xi)\circ( a(z, \xi))-1$, since $p$ is a symbol of an analytic differential operator the composition is a symbol of an  analytic pseudodifferential operator (and not only a symbol of formal  analytic pseudodifferential). Since
\begin{align*}
   p(z, \xi) \circ (p_m(z, \xi))^{-1} \circ \Big(\sum_{j=0}^{\infty} (-1)^{j} r^{j}(z, \xi)\Big)=1
\end{align*}
it follows that
\begin{align*}
   |s(z, \xi)|  &= \Big|p(z,\xi)\circ \Big( a(z, \xi)-(p_m(z, \xi))^{-1} \circ \Big(\sum_{j=0}^{\infty} (-1)^{j} r^{j}(z, \xi)\Big)\Big)\Big|
\end{align*}
thus, one can check that, for every $K\subset U$ compact, there are $\tilde{\epsilon}>0$ and $\tilde{c}>0$ such that
\begin{align}\label{Seanalitico}
  |s(z, \xi)| \leq \tilde{c} e^{-\tilde{\epsilon} |\xi|} 
\end{align}
for every $z \in \{ \tilde{z}\in \CN: d(\tilde{z}, K)< \tilde{\epsilon}\}$ and $\xi\in \mathcal{C}_1.$

Now we define
  \begin{align*}
  E_{H_1}(z)=  \int_{|\tau-x_0|\leq a} \int_{\mathcal{C}_1}  a(z, \xi) e^{i\xi(z-\tau)} \mathcal{F}_p\nu (\tau, \xi) |\xi|^{\frac{N}{2k}} \dd \xi \dd \tau
  \end{align*}
  and
\begin{align*}
  S_{H_1}(z)=  \int_{|\tau-x_0|\leq a} \int_{\mathcal{C}_1} s(z, \xi) e^{i\xi(z-\tau)} \mathcal{F}_p\nu (\tau, \xi) |\xi|^{\frac{N}{2k}} \dd \xi \dd \tau.
 \end{align*}  
 
 We note that
 \begin{align*}
   P(z, D) E_{H_1}(z)= H_1(z)+ S_{H_1}(z)
 \end{align*}
 and that $S_{H_1}$ is holomorphic in a complex neighborhood of $U$. To see that, fix $x \in U$, so \eqref{Seanalitico} for $K= \{x\}$ yields $\tilde{c}>0, \tilde{\epsilon}>0$ such that $|s(z, \xi)|\leq \tilde{c}e^{- \epsilon|\xi|}$ for every $z\in B_{\tilde{\epsilon}}^{\C}(x)$, proving the $S_{H_1}$ is holomorphic in $B_{\tilde{\epsilon}/2}^{\C}(x)$. 

We conclude that
\begin{align*}
  P(x,D) \bb E_{H_1}= \bb H_1+ \bb S_{H_1}
\end{align*}
in $\mathcal{B}(U)$, where $\bb S_{H_1}\in C^{\omega}(U)$.

Now assume that there are $C>0$ and $M>0$ such that
\begin{align*}
  |\mathcal{F}_p\nu (\tau, \xi)|\leq C(1+|\xi|)^M
\end{align*}
for every $\tau$ such that $|\tau-x_0|\leq a$ and $\xi \in \mathcal{C}_1.$

Let us prove that $\bb E_{H_1}\in \mathcal{D}'(U).$ Fix $y\in \RN$, $|y|< \delta$, and, for $\varphi\in \Cinf_c(U)$, consider
\begin{align*}
  \int_{U} E_{H_1}(x+i y) \varphi(x) \dd x&= \int_{U} \int_{|\tau-x_0|\leq a} \int_{\mathcal{C}_1} a(x+iy,\xi)\varphi(x) e^{i\xi(x+iy -\tau)} \mathcal{F}_p\nu (\tau, \xi) |\xi|^{\frac{N}{2k}}  \dd \xi \dd \tau \dd x\\
&= \int_{U} \int_{|\tau-x_0|\leq a} \int_{\mathcal{C}_1} \Delta_x^{\ell}\big( a(x+iy,\xi)\varphi(x)\big) \frac{ e^{i\xi(x+iy -\tau)}}{|\xi|^{2\ell}} \mathcal{F}_p\nu (\tau, \xi) |\xi|^{\frac{N}{2k}}  \dd \xi \dd \tau \dd x
\end{align*}
so choosing $\ell$ great enough we can assure the integrability independently of $y$ and then $\bb E_{H_1}$ is a distribution of order $2 \ell.$


\end{document}